%%%%%%%%%%%%%%%%%%%%%%%%%%%%%%%%%%%%%%%%%%%%%%%%%%%%%%%%%%%%%%%%%%%%
%%%%   On Hopf hypersurfaces of the complex hyperbolic quadric  %%%%
%%%%             with constant principal curvatures             %%%%
%%%%                     2025-10-14                             %%%%
%%%%%%%%%%%%%%%%%%%%%%%%%%%%%%%%%%%%%%%%%%%%%%%%%%%%%%%%%%%%%%%%%%%%
\documentclass[reqno]{amsart}%[a4paper,10pt,reqno]{amsart}

\usepackage{mathrsfs, amsfonts, amsmath, amssymb, amscd}
\usepackage{amsbsy,bm, amsthm, enumerate}

\usepackage[english]{babel}
\usepackage{amsmath,amssymb,amsthm}
\usepackage{hyperref}
\usepackage[centering]{geometry}
\usepackage{xcolor}

%\textwidth=13.5cm
  %% command for the metric  < , >
   %%  abbreviation for S3 x S3
   %%  Levi Civita connection on S3 x S3
   %%  Levi Civita connection on S3 x S3
\renewcommand{\epsilon}{\varepsilon}            %%  changes epsilon
  %%  Levi-Civita symbol
  %%  shorthand notation for \alpha\cdot\alpha, etc.
     %%
     %%
      %%

%%%%%%%%%%%%%%%%%%%%%%%%%%%

%%%%%%%%%%%%%%%%%%%%%%%%%%

    %% Real part
    %% Imaginary part
    %% Imaginary part
    %% Imaginary part

\newtheorem{theorem}{Theorem}[section]   %% definition of theorem environment
\newtheorem*{theorem*}{Theorem}          %% a theorem environment without numbering
\newtheorem{lemma}[theorem]{Lemma}
\newtheorem{proposition}[theorem]{Proposition}

\theoremstyle{definition}
\newtheorem{definition}[theorem]{Definition}

\newtheorem{example}[theorem]{Example}

\newtheorem{remark}{Remark}[section]

\numberwithin{equation}{section}

\title[On Hopf hypersurfaces of the complex hyperbolic quadric]
{On Hopf hypersurfaces of the complex hyperbolic quadric with constant principal curvatures}

\author{Haizhong Li, Hiroshi Tamaru \and Zeke Yao}

\makeatletter
\@namedef{subjclassname@2020}{\textup{2020} Mathematics Subject Classification} 
\makeatother
\subjclass[2020]{53C42, 53B25, 53C55}

%\thanks{2020 {\it Mathematics Subject Classification.}
%53C42, 53B25, 53C55}

%\subjclass[2010]{Primary 53C42; Secondary 53B25} % 53C55
\keywords{Complex hyperbolic quadric, Hopf hypersurface, constant principal curvatures,  isoparametric hypersurface}

\date{}
\begin{document}
\begin{abstract}
In this paper, we study the Hopf hypersurfaces of the complex hyperbolic quadric
$Q^{m*}=SO^o_{2,m}/(SO_2\times SO_m)$ ($m\geq3$) with constant principal curvatures.
We classify the Hopf hypersurfaces of $Q^{m*}$ ($m\geq3$) with at most two
distinct constant principal curvatures. For Hopf hypersurfaces with three or four distinct constant principal curvatures, we determine the values of the principal curvatures as well as their multiplicities.
\end{abstract}

\maketitle

%========================================
\section{Introduction}\label{sect:1}

%The complex quadric $Q^m$ is a K\"{a}hler-Einstein manifold, which can be seen in several different
%ways, for example, as a complex hypersurface of the complex projective space $\mathbb{C}P^{m+1}$,
%as the Grassmannian manifold of oriented $2$-planes in $\mathbb{R}^{m+2}$ or as the homogeneous space
%$SO_{m+2}/(SO_2\times SO_m)$.
As the non-compact dual of the complex quadric $Q^m$, the $m$-dimensional complex hyperbolic quadric
$Q^{m\ast}$ is a simply connected Riemannian symmetric space whose curvature tensor is
the negative of that of $Q^m$. It is known that $Q^{m\ast}$ can not be realized as a
homogeneous complex hypersurface of the complex hyperbolic space $\mathbb{C}H^{m+1}$
(see \cite{Sm-2}), whereas Montiel-Romero \cite{M-R} proved that $Q^{m*}$ can be isometrically
immersed into the complex anti-de Sitter space $\mathbb{C}H_1^{m+1}(-c)$
($c>0$) as a complex Einstein hypersurface. Accordingly, akin to $Q^m$,
the complex hyperbolic quadric $Q^{m\ast}$ is endowed with two significant geometric structures: an almost product structure $A$ and a K\"ahler structure $J$. These structures are anticommutative, i.e. $AJ = -JA$. 
For more details about $Q^{m\ast}$, we refer the readers to \cite{BS-2022,K-S,M-R,Suh4,V-W}.

Let $\overline{M}$ be an almost Hermitian manifold with almost complex structure $J$, and $M$ a connected orientable real hypersurface of $\overline{M}$ with unit normal vector field $N$. One defines the 
{\it Reeb vector field} on $M$ by $\xi := -JN$. Then $M$ is called a 
{\it Hopf hypersurface} if the integral curves of $\xi$ are geodesics. In K\"ahler manifolds, a real hypersurface being Hopf is equivalent to that
its Reeb vector field $\xi$ is a principal vector field (cf. \cite{BBW}). 
Over the past four decades, Hopf hypersurfaces have been a central object in the study of real hypersurfaces of non-flat complex space forms and other almost Hermitian manifolds. 
We refer the readers to \cite{B,BS-2022,CR,KM,NR} and the references therein for more details about this aspect. 
Among the most important results in this area are the complete classifications, due to Kimura \cite{KM} for the 
complex projective space $\mathbb{C}P^m$ ($m\geq2$) and to Berndt \cite{B} for the complex hyperbolic space $\mathbb{C}H^m$ ($m\geq2$), of Hopf hypersurfaces with constant principal curvatures. Their results are fundamental and highly influential in this field.

In this paper, we investigate Hopf hypersurfaces in the complex hyperbolic quadric $Q^{m*}$.  For $m=2$, the complex hyperbolic quadric $Q^{2*}$
with Einstein constant $-1$ is holomorphically isometric to the K\"ahler surface
$\mathbb{H}^2\times\mathbb{H}^2$. In this setting, Gao-Ma-Yao \cite{GMY} classified the homogeneous hypersurfaces, the isoparametric hypersurfaces and the hypersurfaces of $\mathbb{H}^2\times\mathbb{H}^2$ with at most two distinct constant principal curvatures. The classification of
Hopf hypersurfaces of $\mathbb{H}^2\times\mathbb{H}^2$ with constant principal curvatures can be obtained by Theorem 1.3 of \cite{ZGHY}.  
It is worth noting that several related results on hypersurfaces in $\mathbb{S}^2\times\mathbb{S}^2$ (the compact dual of 
$\mathbb{H}^2\times\mathbb{H}^2$) have also been obtained, see \cite{GHMY, LVWY, Ur}. 
For $m\geq3$, some known results about real hypersurfaces in $Q^{m*}$ include, for example, the classification of real hypersurfaces with isometric Reeb flow by Suh \cite{Suh4} and the classification of contact hypersurfaces by Berndt and Suh \cite{BS-2022}. 
For more studies on Hopf hypersurfaces of $Q^{m*}$, we refer to, e.g.,
\cite{B-2023,BS-2022,K-S,Suh4} and the references therein. 
Nonetheless, a complete classification of Hopf hypersurfaces with constant principal curvatures remains an open challenge. This paper is motivated by the following fundamental problem:

\vskip 2mm

\noindent {\bf Problem} Classify all the Hopf hypersurfaces of $Q^{m*}$ ($m\geq3$) with constant principal curvatures.

\vskip 2mm

In almost Hermitian manifolds, the classification of Hopf hypersurfaces with constant principal curvatures is a topic of significant interest. 
While this problem has been resolved in the complex hyperbolic space, the next natural candidate among noncompact symmetric spaces is the complex hyperbolic quadric $Q^{m*}$. It should be emphasized that the assumption of Hopf hypersurfaces in  $Q^{m*}$ with constant principal curvature is neither overly strong nor restrictive. Indeed, even in the complex hyperbolic space, the condition of constant principal curvatures alone is not sufficient to achieve a complete classification. For example, under the additional assumption that the Reeb vector field has two nontrivial projections onto the principal curvature spaces, 
D\'{\i}az-Ramos and Dom\'{\i}nguez-V\'{a}zquez \cite{DD1}  
obtained a complete classification of real hypersurfaces in the complex hyperbolic space with constant principal curvatures.

%Recall that, the lifts $\pi^{-1}(M)$ of Hopf hypersurface $M$ of $\mathbb{C}P^m$ with constant principal curvatures with respect to
%the Hopf map are isoparametric hypersurfaces of $\mathbb{S}^{2m+1}$, then using the known isoparametric theory
%of unit sphere, one can determine the numbers and multiplicities of constant principal curvatures
%of $M$ (cf. \cite{KM}). On the other hand, by establishing the Cartan's formula on
%Hopf hypersurface of $\mathbb{C}H^m$ with constant principal curvatures, and
%through careful analysis, one can also determine the numbers and multiplicities of constant principal %curvatures(cf. \cite{B}).

%In the following, let $M$ be a Hopf hypersurface of $Q^m$ ($m\geq3$) with constant principal curvatures.

The classification problem for Hopf hypersurfaces with constant principal curvatures in $Q^{m*}$ ($m\geq3$) is significantly more complicated than that of $\mathbb{C}P^m$ and $\mathbb{C}H^m$. By Lemma \ref{lemma:2.5}, such a real hypersurface must have either $\mathfrak{A}$-principal unit normal vector field or $\mathfrak{A}$-isotropic unit normal vector field
(see definition \ref{def:2.1}). Moreover, Theorem \ref{thm:2.7} shows that a Hopf hypersurface in $Q^{m*}$ $(m\ge3)$ with $\mathfrak{A}$-principal unit normal vector field is either an open part of Example \ref{ex:3.1} or Example \ref{ex:3.2} or Example \ref{ex:3.3}. Consequently, we only need to focus on the Hopf hypersurface with constant principal curvatures and $\mathfrak{A}$-isotropic unit normal vector field. 
A foundational result (Theorem \ref{thm:4.2w}) shows that a Hopf hypersurface in $Q^{m*}$ $(m\ge3)$ with constant principal curvatures is an
isoparametric hypersurface (definition see page 12 in subsection \ref{sect:4.1}), and its parallel hypersurfaces also have constant principal curvatures. This allows us to apply an important theorem of Ge-Tang \cite{GT}, from which it follows that any focal submanifold (if it exists) must be austere.
We also establish Cartan's formulas (see \eqref{eqn:ca2}, \eqref{eqn:call} and \eqref{eqn:call2g}) for the Hopf hypersurface of $Q^{m*}$ ($m\geq3$) with constant principal curvatures and $\mathfrak{A}$-isotropic unit normal vector field. 
Unlike in $\mathbb{C}H^m$, these formulas involve the almost product structure $A$, making the interplay between the shape operator $S$, the complex structure $J$ and the almost product structure $A$ highly nontrivial. 
Given the central role of the almost product structure $A$, 
we establish a useful lemma which states that $A$ can project an eigenspace $V_{\lambda}$ ($\lambda\neq\pm1$) onto the orthogonal complement of $V_{\lambda}\oplus JV_{\lambda}$ (see Lemma \ref{lemma:5.1}). It is worth noting that, due to the curvature nature of the ambient manifold, Hopf hypersurfaces in $Q^{m*}$ with constant principal curvatures show more diversity than those in $Q^m$ (see \cite{LTY}).

In this paper, we study the classification problem of Hopf hypersurfaces of $Q^{m*}$ ($m\geq3$) with constant principal curvatures. By restricting the number of distinct principal curvatures to be at most two, we obtain the following result.
\begin{theorem}\label{thm:1.1a}
Let $M$ be a Hopf hypersurface of $Q^{m*}$ ($m\geq3$) with at most two distinct constant principal curvatures. Then $M$ is an open part of Example \ref{ex:3.1}.
\end{theorem}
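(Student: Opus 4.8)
The plan is to split the analysis according to the two possible types of unit normal vector field supplied by Lemma~\ref{lemma:2.5}: either $\mathfrak{A}$-principal or $\mathfrak{A}$-isotropic. In the $\mathfrak{A}$-principal case, Theorem~\ref{thm:2.7} already tells us that $M$ is an open part of Example~\ref{ex:3.1}, Example~\ref{ex:3.2}, or Example~\ref{ex:3.3}; so the first task is to record the number of distinct constant principal curvatures of each of these three model examples and to discard the two that carry three or more distinct principal curvatures. I expect that Examples~\ref{ex:3.2} and~\ref{ex:3.3}, being the tubes/horosphere-type objects associated with the almost product structure, each have at least three distinct principal curvatures (coming separately from the Reeb direction, the $\mathfrak{A}$-principal directions, and their $J$-images), leaving Example~\ref{ex:3.1} as the only member of the list compatible with the hypothesis ``at most two distinct principal curvatures.''

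The remaining and substantive case is the $\mathfrak{A}$-isotropic one, and here the strategy is to show that it simply cannot occur under the at-most-two-distinct-curvatures assumption, forcing us back into the $\mathfrak{A}$-principal branch and hence to Example~\ref{ex:3.1}. First I would invoke Theorem~\ref{thm:4.2w} to upgrade ``constant principal curvatures'' to ``isoparametric,'' so that $M$ and all its parallel hypersurfaces have constant principal curvatures and any focal submanifold is austere by Ge-Tang \cite{GT}. Writing $S\xi = \alpha\xi$ for the Hopf condition, I would then exploit the Cartan-type formulas \eqref{eqn:ca2}, \eqref{eqn:call}, \eqref{eqn:call2g} together with Lemma~\ref{lemma:5.1}: the latter says that for an eigenvalue $\lambda \neq \pm 1$ the almost product structure $A$ maps $V_\lambda$ into the orthogonal complement of $V_\lambda \oplus JV_\lambda$. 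The key structural point is that for an $\mathfrak{A}$-isotropic normal the interplay of $S$, $J$, and $A$ produces, from any one principal curvature $\lambda$, additional companion eigenvalues (typically the $J$-related and $A$-related partners), and these are generically forced to be distinct from $\lambda$ and from $\alpha$.

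Concretely, I would count eigenspaces. The Reeb direction contributes $\alpha$; by Lemma~\ref{lemma:5.1} the presence of any eigenspace $V_\lambda$ with $\lambda\neq\pm1$ forces nontrivial $A$-images landing in a genuinely different eigenspace, and the Cartan formulas then relate $\lambda$ to at least one further curvature $\mu\neq\lambda$. The aim is to demonstrate that an $\mathfrak{A}$-isotropic Hopf hypersurface necessarily carries at least three mutually distinct principal curvatures, contradicting the hypothesis. The main obstacle, and where the bulk of the computation lives, is ruling out the degenerate coincidences that could collapse this count: one must show that the companion eigenvalues produced by $J$ and $A$ cannot all coincide with $\alpha$ or with each other, and in particular that the shape operator cannot restrict to the $\mathfrak{A}$-isotropic distribution as a single scalar. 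This is precisely where the $\mathfrak{A}$-isotropy (as opposed to $\mathfrak{A}$-principality) is essential, since it guarantees that $A\xi$ has a nonzero component orthogonal to $\xi$ and feeds nontrivially into the Cartan relations; handling the borderline eigenvalues $\lambda=\pm1$ excluded by Lemma~\ref{lemma:5.1} separately will require a short direct argument using \eqref{eqn:call2g}.

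Once the $\mathfrak{A}$-isotropic case is excluded, the two branches together yield that $M$ must be $\mathfrak{A}$-principal with at most two distinct principal curvatures, and by the first paragraph the only surviving model is Example~\ref{ex:3.1}, completing the proof.
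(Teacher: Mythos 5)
Your overall architecture coincides with the paper's: split via Lemma~\ref{lemma:2.5} into the $\mathfrak{A}$-principal and $\mathfrak{A}$-isotropic branches, settle the principal branch by Theorem~\ref{thm:2.7} together with the curvature tables of Examples~\ref{ex:3.1}--\ref{ex:3.3} (the two tubes each have three distinct principal curvatures, so only Example~\ref{ex:3.1} survives), and show the isotropic branch is incompatible with having at most two distinct constant principal curvatures. The principal branch as you wrote it is correct and is exactly what the paper does.

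In the isotropic branch, however --- which you yourself call the substantive case --- your text is a plan whose named tools cannot carry it out, and the two ingredients that actually drive the paper's argument are absent. First, you never invoke Lemma~\ref{lemma:2.6}, which gives $SAN=SA\xi=0$; this is what makes $0$ a principal curvature (on ${\rm Span}\{AN,A\xi\}$), so that together with $S\xi=\alpha\xi$ the hypothesis pins the curvature set down to $\{\alpha,0\}$. Second, you never invoke Lemma~\ref{lemma:2.3}, i.e.\ the Hopf identity \eqref{eqn:SS1}, $(2\lambda-\alpha)S\phi X=(\alpha\lambda-2)\phi X$, which is the mechanism that produces the companion eigenvalue $\tfrac{\alpha\lambda-2}{2\lambda-\alpha}$ on $\phi V_\lambda$. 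You attribute the production of companion curvatures to the Cartan formulas, but \eqref{eqn:call} and \eqref{eqn:call2g} only constrain pairs of curvatures that already exist; they create nothing. Without the Hopf identity you cannot conclude that $\alpha=\sqrt{2}$, that $\sigma(\mathcal{Q})=\{\sqrt{2},0\}$, and --- crucially --- that $JV_{\sqrt{2}}=V_0$. Without that last fact the appeal to Lemma~\ref{lemma:5.1}(3) does not close: from $AV_\lambda\perp V_\lambda\oplus JV_\lambda$ and $A\mathcal{Q}\subset\mathcal{Q}$ one only gets $AV_\lambda\subset V_\mu$, which is no contradiction unless one knows $JV_\lambda=V_\mu$ exhausts the complement of $V_\lambda$ in $\mathcal{Q}$; in the paper this is precisely what \eqref{eqn:SS1} supplies, after which $AV_{\sqrt{2}}\perp\mathcal{Q}$ and $AV_{\sqrt{2}}\subset\mathcal{Q}$ force $AV_{\sqrt{2}}=0$, contradicting $A^2={\rm Id}$. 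Third, the degenerate case where $S$ restricts to a single scalar on $\mathcal{Q}$ (in particular $\alpha=0$) is flagged in your plan as an obstacle but never resolved; the paper kills it because then $S\phi=\phi S$, so $M$ has isometric Reeb flow, and Theorem~\ref{thm:2.2} together with the curvature counts of Examples~\ref{ex:3.4} and \ref{ex:3.5} rules it out (the same theorem disposes of the totally umbilical, one-curvature case). Finally, the isoparametric/austere machinery (Theorem~\ref{thm:4.2w}, Ge--Tang) that you propose to use plays no role in this theorem and would not by itself yield any of the eigenvalue relations above; and once the values are pinned to $\{\sqrt{2},0\}$, the borderline $\lambda=\pm1$ you planned to treat with \eqref{eqn:call2g} never actually arises.
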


By restricting the number of distinct principal curvatures to be three or four,
we can determine the values and multiplicities of the principal curvatures, and we
obtain the following results.

\begin{theorem}\label{thm:1.1b}
Let $M$ be a Hopf hypersurface of $Q^{m*}$ ($m\geq3$) with three distinct constant principal curvatures. Then, one of the following three cases occurs:
\begin{enumerate}
\item[(1)]
$M$ is either an open part of Example \ref{ex:3.2}, or Example \ref{ex:3.3}, or Example \ref{ex:3.4};

\item[(2)]
$M$ has constant principal curvatures $0,1,-1$ with multiplicities $3, m-2,m-2$, respectively.
In particular, Example \ref{ex:3.6} is contained in this case;

\item[(3)]
$M$ has constant principal curvatures $0,1,-1$ with multiplicities $2, m-1,m-2$, respectively.
In particular, Example \ref{ex:3.7} is contained in this case.
\end{enumerate}
\end{theorem}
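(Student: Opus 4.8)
The plan is to split on the type of the unit normal given by Lemma~\ref{lemma:2.5} and to dispose of the $\mathfrak{A}$-principal case immediately. If $M$ has $\mathfrak{A}$-principal unit normal, Theorem~\ref{thm:2.7} forces $M$ to be an open part of Example~\ref{ex:3.1}, \ref{ex:3.2} or~\ref{ex:3.3}; reading off the principal curvatures of these three models, Example~\ref{ex:3.1} carries at most two distinct ones (in accordance with Theorem~\ref{thm:1.1a}), whereas Examples~\ref{ex:3.2} and~\ref{ex:3.3} carry exactly three. This settles the $\mathfrak{A}$-principal contribution to conclusion~(1) and reduces everything to the case of an $\mathfrak{A}$-isotropic unit normal, which is where Example~\ref{ex:3.4} and conclusions~(2) and~(3) will arise.

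Assume henceforth that $N$ is $\mathfrak{A}$-isotropic. I would first fix the adapted tangent vectors $\xi=-JN$, $AN$ and $AJN$, recording from $AJ=-JA$ and $J\xi=N$ that $\operatorname{span}\{AN,AJN\}$ is a $J$-invariant (complex) line transverse to $\xi$. Denote the three distinct constant principal curvatures by $\lambda_1,\lambda_2,\lambda_3$, with eigenspaces $V_1,V_2,V_3$ and multiplicities $m_1,m_2,m_3$ satisfying $m_1+m_2+m_3=2m-1$; the Hopf condition places $\xi$ in one eigenspace with Hopf curvature $\alpha$. Combining the Hopf condition with the Codazzi equation of $Q^{m*}$ then produces a pairing $\lambda\mapsto\mu(\lambda)$ of the eigenvalues induced by $J$, together with correction terms carried by the almost product structure $A$. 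Here Lemma~\ref{lemma:5.1} is decisive: for each eigenvalue $\lambda\neq\pm1$ it sends $V_\lambda$ into $(V_\lambda\oplus JV_\lambda)^\perp$, which pins down the otherwise unknown interplay between $S$, $J$ and $A$ on all but the $\pm1$-eigenspaces.

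Next I would feed this information into the Cartan-type identities \eqref{eqn:ca2}, \eqref{eqn:call} and \eqref{eqn:call2g}, available because $M$ is isoparametric by Theorem~\ref{thm:4.2w}. Evaluating them at each of the three eigenvalues yields a closed algebraic system in $\lambda_1,\lambda_2,\lambda_3$ and $m_1,m_2,m_3$; feeding in the $J$-pairing from the previous step and the sign symmetry forced by Theorem~\ref{thm:4.2w} (via Ge--Tang, every focal submanifold is austere, so the nonzero focal principal curvatures occur in $\pm$ pairs) pins the eigenvalues down to $\{0,1,-1\}$ in the normalization used. Determining the multiplicities then reduces to tracking how the $J$-complex line $\operatorname{span}\{AN,AJN\}$ and the Reeb field $\xi$ distribute among the three eigenspaces: when the whole line lies in the $0$-eigenspace one gets $\dim V_0=3$ with $J$ bijecting $V_1$ and $V_{-1}$, hence multiplicities $(3,m-2,m-2)$ and conclusion~(2); when exactly one of $AN,AJN$ stays in the $0$-eigenspace while the other is absorbed by a $\pm1$-eigenspace, the $J$-symmetry between $V_1$ and $V_{-1}$ is broken, $\dim V_0=2$, and the multiplicities become $(2,m-1,m-2)$, i.e.\ conclusion~(3). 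Finally one verifies that Examples~\ref{ex:3.6} and~\ref{ex:3.7} realize these two curvature data and that Example~\ref{ex:3.4} is the remaining isotropic model feeding into~(1).

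The hard part will be the analysis on the eigenspaces $V_1$ and $V_{-1}$, precisely the ones excluded from Lemma~\ref{lemma:5.1}. There the clean projection property of $A$ fails, and its action on $V_{\pm1}$ — together with its compatibility with the $J$-pairing — must be extracted by hand from the Cartan identities and the Codazzi equation rather than quoted. This is also the step that genuinely exploits the $\mathfrak{A}$-isotropic hypothesis and the austere structure of the focal sets, and it is exactly what forces two distinct multiplicity patterns rather than one; keeping the bookkeeping of $\xi$, $AN$ and $AJN$ against $V_0$, $V_1$, $V_{-1}$ consistent across the Hopf, Codazzi and Cartan relations is where the real effort lies.
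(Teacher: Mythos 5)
Your overall skeleton (the dichotomy from Lemma~\ref{lemma:2.5}, disposal of the $\mathfrak{A}$-principal case by Theorem~\ref{thm:2.7}, then an analysis of the $\mathfrak{A}$-isotropic case via the $\phi$-pairing of eigenvalues, Lemma~\ref{lemma:5.1} and Cartan's formulas) matches the paper's strategy, but the proposal contains a genuine error at exactly the step that produces conclusions (2) and (3). You claim the two multiplicity patterns arise according to whether "the whole line $\operatorname{span}\{AN,AJN\}$ lies in the $0$-eigenspace" or "exactly one of $AN,AJN$ stays in the $0$-eigenspace while the other is absorbed by a $\pm1$-eigenspace." This second alternative cannot occur: by Lemma~\ref{lemma:2.6} (and Remark~\ref{rem:2.2aaas}), $SAN=SA\xi=0$ holds identically on any such hypersurface, so $AN$ and $A\xi=-AJN$ are \emph{always} $0$-principal. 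The true dichotomy is governed by where the Reeb vector $\xi$ sits, i.e.\ by the value of the Reeb curvature $\alpha$: in the paper, conclusion (2) comes from the case $\alpha=0$ (then the $0$-eigenspace is $\operatorname{span}\{\xi,AN,A\xi\}$, multiplicity $3$, and Lemma~\ref{lemma:5.5} plus Lemma~\ref{lemma:5.1}(3) force $\mathcal{Q}=V_1\oplus V_{-1}$ with $JV_1=V_{-1}$), while conclusion (3) comes from the case $\alpha=1\in\sigma(\mathcal{Q})$ (then $\xi$ augments $V_1$ to multiplicity $m-1$ and the $0$-eigenspace is $\operatorname{span}\{AN,A\xi\}$, multiplicity $2$). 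The paper's entire case analysis is organized around the value of $\alpha$ and whether $\alpha$ and $0$ belong to $\sigma(\mathcal{Q})$; your bookkeeping, as stated, would never isolate these two cases correctly.

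Two further gaps. First, conclusion (1) asserts that $M$ \emph{is an open part of} Example~\ref{ex:3.4}, which is a rigidity statement that eigenvalue data alone cannot give. In the paper this arises in the subcase $\alpha,0\notin\sigma(\mathcal{Q})$, where $S$ restricted to $\mathcal{Q}$ has a single eigenvalue, hence $S\phi=\phi S$, i.e.\ $M$ has isometric Reeb flow, and only then does Theorem~\ref{thm:2.2} identify $M$ with Example~\ref{ex:3.4}; your proposal never invokes Theorem~\ref{thm:2.2}, so this case is out of reach. Second, your mechanism for pinning the eigenvalues to $\{0,1,-1\}$ leans on austerity of focal submanifolds via Ge--Tang. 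The paper does not use that tool for this theorem (it is reserved for Theorem~\ref{thm:1.1c}), and it cannot carry the weight you assign to it: in the surviving configurations $\{0,1,-1\}$, and in several configurations that must be excluded, no focal submanifold exists at all (cf.\ Remark~\ref{rem:3.1}), so austerity is vacuous there. The exclusions are instead obtained pointwise, from the contradiction between Lemma~\ref{lemma:5.1}(3) (which gives $AV_\lambda\perp\operatorname{Span}\{V_\lambda,JV_\lambda\}$ for $\lambda\neq\pm1$) and the $A$-invariance of $\mathcal{Q}$, together with direct evaluations of Cartan's formulas \eqref{eqn:call} and \eqref{eqn:call2g} in the $\alpha=2$ subcases; these concrete computations are precisely what your sketch leaves undone.
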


\begin{theorem}\label{thm:1.1c}
Let $M$ be a Hopf hypersurface of $Q^{m*}$ ($m\geq3$) with four distinct constant principal curvatures. Then, one of the following four cases occurs:
\begin{enumerate}
\item[(1)]
$M$ is an open part of Example \ref{ex:3.5};

\item[(2)]
$M$ has constant principal curvatures $2\tanh(2r),0,1,-1$ with multiplicities $1,2, m-2,m-2$  respectively, where $r\in \mathbb{R}_+$ and $r\neq\frac{{\rm arctanh}(\frac{1}{2})}{2}$.
In particular, Example \ref{ex:3.8} is contained in this case;

\item[(3)]
$M$ has constant principal curvatures $2\coth(2r),0,1,-1$ with multiplicities $1,2, m-2,m-2$ respectively, where $r\in \mathbb{R}_+$.
In particular, Example \ref{ex:3.9} is contained in this case;

\item[(4)]
$M$ has constant principal curvatures $2,0,1,-1$ with multiplicities $1,2, m-2,m-2$, respectively.
In particular, Example \ref{ex:3.10} is contained in this case.
\end{enumerate}
\end{theorem}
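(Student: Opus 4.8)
The plan is to reduce first to the $\mathfrak{A}$-isotropic case and then to read off the four admissible curvature/multiplicity profiles from the normal Jacobi operator together with the Cartan formulas. By Lemma~\ref{lemma:2.5} the unit normal $N$ is either $\mathfrak{A}$-principal or $\mathfrak{A}$-isotropic. If $N$ were $\mathfrak{A}$-principal, then Theorem~\ref{thm:2.7} would force $M$ to be an open part of Example~\ref{ex:3.1}, \ref{ex:3.2} or \ref{ex:3.3}; but these carry at most three distinct constant principal curvatures by Theorems~\ref{thm:1.1a} and \ref{thm:1.1b}, contradicting the hypothesis of four distinct curvatures. Hence $N$ is $\mathfrak{A}$-isotropic, and I would run the entire remaining argument under this assumption; note that the model realizing case~(1), Example~\ref{ex:3.5}, is itself $\mathfrak{A}$-isotropic, so the goal is to show that an $\mathfrak{A}$-isotropic Hopf hypersurface with four distinct constant principal curvatures either agrees (up to open parts) with Example~\ref{ex:3.5} or realizes one of the rigid profiles in (2)--(4).

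The backbone is the normal Jacobi operator $\bar{R}_N=\bar{R}(\,\cdot\,,N)N$. Using the curvature tensor of $Q^{m*}$ and the isotropy relations $\langle AN,N\rangle=\langle AN,\xi\rangle=0$, I would compute that $\bar{R}_N$ has eigenvalue $-4$ on $\R\xi$ (multiplicity $1$), eigenvalue $0$ on $\operatorname{span}\{AN,JAN\}$ (multiplicity $2$), and eigenvalue $-1$ on the orthogonal complement $\mathcal{W}$ in $T_pM$ (multiplicity $2m-4$). Since $\operatorname{span}\{AN,JAN\}\subset\mathcal{C}=\xi^{\perp}$ is $J$-invariant, so is $\mathcal{W}$. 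The Hopf condition places $\xi$ in the $(-4)$-eigenspace, so its principal curvature $\alpha$ lies on the $c=-4$ branch of the Riccati flow of the isoparametric family (Theorem~\ref{thm:4.2w}); the three solution types of $\alpha'=\alpha^{2}-4$ are exactly $2\tanh(2r)$, $2\coth(2r)$ and the equilibrium $2$, which is the precise source of the trichotomy in cases~(2), (3), (4), with the degenerate value $\alpha=1$ excluded (whence $r\neq\tfrac12\operatorname{arctanh}(\tfrac12)$ in (2)).

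Next I would pin down the shape operator on $\operatorname{span}\{AN,JAN\}$ and on $\mathcal{W}$ by combining the Cartan formulas \eqref{eqn:ca2}, \eqref{eqn:call}, \eqref{eqn:call2g} with Lemma~\ref{lemma:5.1}. Lemma~\ref{lemma:5.1} controls the otherwise intractable $A$-action by sending each eigenspace $V_{\lambda}$ $(\lambda\neq\pm1)$ into $(V_{\lambda}\oplus JV_{\lambda})^{\perp}$, which decouples the $A$-plane from $\mathcal{W}$ and forces the eigenvalue on $\operatorname{span}\{AN,JAN\}$ (the $c=0$ block) to equal the equilibrium $0$ with multiplicity $2$. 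On the $J$-invariant block $\mathcal{W}$, for a unit eigenvector $X$ with $SX=\lambda X$ the Codazzi/Cartan relation yields a companion eigenvalue $\mu$ for $JX$; on the equilibrium branch this collapses to $\{\lambda,\mu\}=\{1,-1\}$, and since $J$ interchanges the $(+1)$- and $(-1)$-eigenspaces isometrically, their multiplicities coincide and equal $m-2$. This gives the profile $\{\alpha,0,1,-1\}$ with multiplicities $\{1,2,m-2,m-2\}$ of cases~(2)--(4). The complementary branch, in which the $\mathcal{W}$-eigenvalues are genuine non-equilibrium tube curvatures (e.g.\ $\coth r,\tanh r$ paired by $J$ with $2\coth(2r)=\coth r+\tanh r$), produces the distinct four-curvature profile of a tube and is identified with an open part of Example~\ref{ex:3.5}, giving case~(1).

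To finish, and to exclude spurious configurations, I would invoke Theorem~\ref{thm:4.2w} and the Ge--Tang theorem: $M$ is isoparametric and any focal submanifold is austere. The austere condition forces the nonzero non-Reeb eigenvalues to occur in symmetric pairs with matching multiplicities, which both confirms the even split $m-2,m-2$ on $\mathcal{W}$ and rules out any five-valued or lopsided configuration, thereby closing the list at the four cases. I expect the main obstacle to be the simultaneous diagonalization analysis on $\mathcal{W}$: one must reconcile the $A$-action (known only through Lemma~\ref{lemma:5.1}, which does not preserve eigenspaces) with the $J$-pairing and the Cartan relations, showing that the only admissible eigenvalue data on the $c=-1$ block are either $\{1,-1\}$ at equilibrium or the tube values of Example~\ref{ex:3.5}, and then verifying that the austere focal data of the isoparametric family leaves no room beyond cases~(1)--(4).
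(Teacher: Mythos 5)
Your opening reduction (excluding the $\mathfrak{A}$-principal case via Theorem \ref{thm:2.7}, since Examples \ref{ex:3.1}--\ref{ex:3.3} have at most three distinct curvatures) coincides with the paper's, and you correctly sense that the key tools are Lemma \ref{lemma:5.1}, the Cartan formulas, and the austerity of focal submanifolds. But beyond that point the proposal has genuine gaps: it never confronts the subcases in which $\alpha$ or $0$ themselves lie in $\sigma(\mathcal{Q})$, and these are where almost all of the work lives. Since $S\xi=\alpha\xi$ and $SAN=SA\xi=0$ (by Lemma \ref{lemma:2.6}, not by ``Cartan formulas plus Lemma \ref{lemma:5.1}'' as you suggest), the four distinct curvatures need not split as $\{\alpha,0\}\cup\sigma(\mathcal{Q})$ with $\sigma(\mathcal{Q})$ two-valued: for instance $\sigma(\mathcal{Q})=\{\alpha,\tfrac{\alpha^2-2}{\alpha},0,\tfrac{2}{\alpha}\}$ with $\sqrt{2}<\alpha<2$ is a four-valued constant-curvature configuration fully consistent with the Hopf pairing $\mu=\tfrac{\alpha\lambda-2}{2\lambda-\alpha}$, and excluding it (and its many relatives) is exactly what the paper's Cases i-1, i-2, i-3 do, via explicit computation of the focal submanifold's principal curvatures (Proposition \ref{prop:4.3w}), case-by-case verification that austerity fails, sign analyses of \eqref{eqn:call} in which the left-hand side has a definite sign and the right-hand side the opposite one, and in one subcase a frame computation with the Codazzi equation. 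Your blanket assertion that ``the austere condition forces the nonzero non-Reeb eigenvalues to occur in symmetric pairs'' is not a valid shortcut: austerity constrains the \emph{focal} submanifold, whose curvatures are $\tfrac{\lambda_+\lambda_i-1}{\lambda_+-\lambda_i}$ and the like, not $M$ itself, and whether it fails must be checked configuration by configuration. Likewise $\alpha=0$ is not excluded by your Riccati picture; the paper kills it by a parity argument ($\lambda\mapsto -1/\lambda$ is a fixed-point-free involution on $\sigma(\mathcal{Q})$, so $|\sigma(\mathcal{Q})|$ is even, contradicting three values there).

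Two further specific gaps. First, case (1): in the surviving branch $\alpha>2$ with $\sigma(\mathcal{Q})=\{\tfrac{\alpha\pm\sqrt{\alpha^2-4}}{2}\}$ one gets $\phi V_\lambda=V_\lambda$, hence $S\phi=\phi S$, i.e.\ isometric Reeb flow, and only then does Suh's classification (Theorem \ref{thm:2.2}) identify $M$ as an open part of Example \ref{ex:3.5}; you never invoke this, and matching a curvature profile alone cannot yield the rigidity claimed in (1) --- which is precisely why cases (2)--(4) of the theorem claim only that the examples are ``contained'' in them. Second, the multiplicity count when $\alpha=2$: there Lemma \ref{lemma:5.5} gives only $\phi V_\lambda\subset V_1$ for $\lambda\neq1$, so $J$ does \emph{not} automatically interchange $V_1$ and $V_{-1}$, and your claim that ``their multiplicities coincide'' is exactly the nontrivial point; the paper must first use \eqref{eqn:call2g} to show $AX\in\{X,JX\}$ for $X\in V_{-1}$ and then derive the two incompatible identities \eqref{eqn:5.5sq}, \eqref{eqn:5.6sq} to rule out $\dim V_1>\dim V_{-1}$. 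Finally, the Riccati-flow trichotomy you propose is just the reparametrization $\alpha\in(0,2)\Leftrightarrow\alpha=2\tanh(2r)$, $\alpha>2\Leftrightarrow\alpha=2\coth(2r)$, $\alpha=2$; it does not by itself eliminate the exceptional values (such as $\alpha=\sqrt{2}$, or $\alpha=1$ versus the three-curvature Example \ref{ex:3.7}) where the coincidences occur --- those eliminations are the substance of the proof, and the proposal leaves them unaddressed.
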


\begin{remark}\label{rem:1.1}
In Theorems \ref{thm:1.1a}--\ref{thm:1.1c}, Examples \ref{ex:3.1}--\ref{ex:3.10} are all homogeneous real hypersurfaces of $Q^{m*}$ ($m\geq3$). Recall that,
Berndt and Tamaru (\cite{BT2}, for $m=3$), Berndt and Dom\'{\i}nguez-V\'{a}zquez (\cite{BD1}, for $m>3$) classified the cohomogeneity one actions on 
$Q^{m*}$, $m\geq3$, up to orbit equivalence. This naturally leads to the classification of homogeneous real hypersurfaces in $Q^{m*}$. Geometrically,
one can divide the homogeneous real hypersurfaces of $Q^{m*}$ into two types
according to whether there is a focal submanifold. The homogeneous hypersurfaces without focal submanifold were classified by Berndt and Tamaru \cite{BT1} in the more general context of irreducible Riemannian symmetric
spaces of noncompact type. The structural understanding of 
cohomogeneity one actions on symmetric spaces of noncompact type 
has been advanced by D\'{\i}az-Ramos, Dom\'{\i}nguez-V\'{a}zquez and Otero \cite{DDO}, and the classification of isometric cohomogeneity one actions up to orbit equivalence was recently completed by Sanmart\'{\i}n-L\'{o}pez and Solonenko \cite{SS}. Although homogeneous hypersurfaces in Riemannian manifolds are isoparametric hypersurfaces, the converse fails in general. For example, in noncompact symmetric spaces of rank $1$ and those of rank at least $3$,
uncountably many inhomogeneous isoparametric hypersurfaces have been constructed \cite{DD2,DDR,DS}. But so far, the existence of 
inhomogeneous isoparametric hypersurfaces in noncompact symmetric spaces of rank $2$ remains unknown. 
\end{remark}

\begin{remark}\label{rem:1.1aaa}
There are non-Hopf homogeneous real hypersurfaces of $Q^{m*}$ ($m\geq3$).
In fact, as pointed out in subsection 7.4 of \cite{BS-2022}, for any $t\neq(0,\frac{\pi}{4})$, horosphere $M_t$ is non-Hopf. Furthermore, $M_{\arctan(\frac{1}{2})}$ has four distinct constant principal curvatures. When $t\in(0,\arctan(\frac{1}{2}))\cup(\arctan(\frac{1}{2}),\frac{\pi}{4})$, $M_t$ has five distinct constant principal curvatures.
\end{remark}

\begin{remark}\label{rem:1.2}
%Examples \ref{ex:3.6}--\ref{ex:3.10} all satisfy the condition that $S\phi+\phi S=0$,
%where $\phi$ is the contact tensor and $S$ is the shape operator on real hypersurfaces.
For any real hypersurface in a K\"{a}hler manifold
with induced almost contact metric structure $(\phi,\xi,\eta,g)$ and the shape operator $S$, the
condition of $S\phi+\phi S=0$ characterizes the Hopf hypersurfaces for which
the maximal complex subbundle $\mathcal{C}$ is integrable (see Proposition 2.2 of \cite{B-2023}).
In non-flat complex space forms, it is quite remarkable and not obvious that
there are no Hopf hypersurfaces whose maximal complex subbundle $\mathcal{C}$ is integrable.
From the investigations in \cite{BS-2022}, there are also no Hopf
hypersurfaces in the complex quadric $Q^m$ for which $\mathcal{C}$ is integrable.
It is interesting that Berndt \cite{B-2023} constructed explicitly a one-parameter family of homogeneous Hopf hypersurfaces in $Q^{m*}$ for which $\mathcal{C}$ 
is integrable. These homogeneous Hopf hypersurfaces are exactly the Examples \ref{ex:3.6}--\ref{ex:3.10}.
These are also the first known examples which satisfy the condition of $S\phi+\phi S=0$ in irreducible K\"ahler manifolds. Despite this progress, the problem of classifying all real hypersurfaces in $Q^{m*}$ ($m\geq3$) that satisfy 
$S\phi+\phi S=0$ remains open.
\end{remark}

The paper is organized as follows. In Section \ref{sect:2}, we collect some basic
properties of $Q^{m*}$ ($m\geq3$) and some preliminaries of the geometry of the real hypersurfaces of $Q^{m*}$.
In Section \ref{sect:3}, we introduce some known examples of Hopf hypersurfaces with constant principal curvatures in $Q^{m*}$, which are all homogeneous real hypersurfaces. In Section \ref{sect:4}, we study the parallel hypersurfaces and the focal submanifold (if it exists) of Hopf hypersurfaces of $Q^{m*}$ ($m\geq3$) with constant principal curvatures and $\mathfrak{A}$-isotropic unit normal vector field, we also establish Cartan's formulas for Hopf hypersurfaces with constant principal curvatures and $\mathfrak{A}$-isotropic unit normal vector field. Finally, Section \ref{sect:5} is dedicated to
the proofs of Theorems \ref{thm:1.1a},  \ref{thm:1.1b} and \ref{thm:1.1c}.

\textbf{Acknowledgments:}
H. Li was supported by NSFC Grant No.12471047. H. Tamaru was supported by JSPS KAKENHI Grant Numbers JP22H01124 and JP24K21193, and was also partly supported by MEXT Promotion of Distinctive Joint
Research Center Program JPMXP0723833165. Z. Yao was supported by NSFC Grant No. 12401061.

%========================================
\section{Preliminaries}\label{sect:2}

\subsection{The complex hyperbolic quadric}\label{sect:2.1}~

%The complex hyperbolic quadric $Q^{m*}$ is the non-compact dual of the complex quadric $Q^m$. However, the complex hyperbolic quadric $Q^{m*}$ cannot be realized as a homogeneous complex hypersurface
%of the complex hyperbolic space $\mathbb{C}H^{m+1}$. In fact, Smyth has shown
%that every homogeneous complex hypersurface in $\mathbb{C}H^{m+1}$ is totally geodesic (Theorem 3(ii) of \cite{Sm-2}). This is in marked
%contrast to the situation for the complex quadric $Q^m$, which can be realized as a homogeneous
%complex hypersurface of the complex projective space $\mathbb{C}P^{m+1}$ in such a way that the shape
%operator for any unit normal vector field to $Q^m$ is a real structure on the corresponding tangent
%space of $Q^m$, see \cite{R} and \cite{K}. Another related result by Smyth, [\cite{Sm-2}, Theorem 1],
%which states that any complex hypersurface of $\mathbb{C}H^{m+1}$ for which the square of
%the shape operator has constant eigenvalues (counted with multiplicity) is totally geodesic,
%also precludes the possibility of a model of $Q^{m*}$ as a complex hypersurface of $\mathbb{C}H^{m+1}$
%with the analogous property for the shape operator.

%We first introduce some notation which will be used throughout the paper.
As we have described in last section, the complex hyperbolic quadric
$Q^{m\ast}$ can be isometrically
immersed into the complex anti-de Sitter space $\mathbb{C}H_1^{m+1}(-c)$
($c>0$) as a complex Einstein hypersurface.
The pseudo-Euclidean space $\mathbb{R}^{n+1}_{i+1}$ is $\mathbb{R}^{n+1}$ equipped with the metric
$$
\langle(x_1,\ldots, x_{n+1}),(y_1,\ldots, y_{n+1})\rangle_{i+1}=-x_1y_1-\cdots-x_{i+1}y_{i+1}
+x_{i+2}y_{i+2}+\cdots+x_{n+1}y_{n+1}.
$$
The $n$-dimensional pseudo-hyperbolic space $\mathbb{H}^n_i(-1)$ with  constant sectional curvature $-1$ is
$$
\mathbb{H}^n_i(-1)=\{x\in \mathbb{R}^{n+1}_{i+1}|\langle x,x\rangle_{i+1}=-1\}.
$$
%For $i=0$, we have the $n$-dimensional hyperbolic space $\mathbb{H}^n(-1)$ and for
%$i=1$, we have the $n$-dimensional anti-de Sitter space $\mathbb{H}^n_1(-1)$.

We consider the complex space $\mathbb{C}^{m+2}_2$ with the metric $\langle\langle(z_1,\ldots, z_{m+2}),(w_1,\ldots, w_{m+2})\rangle\rangle_2={\rm Re}(-z_1\bar{w}_1-z_2\bar{ w}_2+\cdots+z_{m+2}\bar{w}_{m+2})$.
%Remark that, under the natural identification of $\mathbb{C}$ with $\mathbb{R}^2$, we have
%$\mathbb{C}^{m+2}_2\cong\mathbb{R}^{2m+4}_4$.
Define the complex anti-de Sitter space $\mathbb{C}H^{m+1}_1$ as the set of all complex $1$-dimensional subspaces of $\mathbb{C}^{m+2}_2$. Under the identification $\mathbb{R}^{2m+4}_4\cong \mathbb{C}^{m+2}_2$, we refer to the map
$$
\pi:\mathbb{H}^{2m+3}_3(-1)\rightarrow \mathbb{C}H^{m+1}_1,\ \ \ z\mapsto[z]
$$
as the Hopf fibration. If we equip $\mathbb{C}H^{m+1}_1$ with a pseudo-Riemannian metric $g$ such that $\pi$
becomes a pseudo-Riemannian submersion, then $(\mathbb{C}H^{m+1}_1,g)$ 
has complex index $1$ and constant holomorphic sectional
curvature $-4$, we write  $\mathbb{C}H^{m+1}_1(-4)=(\mathbb{C}H^{m+1}_1,g)$.

For any
$z\in \mathbb{H}^{2m+3}_3(-1)$, we have that $\pi^{-1}([z])=\{e^{\mathbf{i}t}z|t\in \mathbb{R}\}$ and
${\rm ker}(d\pi)_z={\rm span}\{\mathbf{i}z\}$. The complex structure $J$
on $\mathbb{C}H^{m+1}_1(-4)$ is induced by multiplication by $\mathbf{i}$ on $T\mathbb{H}^{2m+3}_3(-1)$ and
$(\mathbb{C}H^{m+1}_1(-4), g, J)$ is a K\"{a}hler manifold.

The complex hyperbolic quadric $Q^{m*}$ as the complex hypersurface of $\mathbb{C}H^{m+1}_1(-4)$ is defined by
$$
Q^{m*}=\{[(z_1,z_1,\ldots,z_{m+2})]\in \mathbb{C}H^{m+1}_1(-4)|-z_1^2-z_2^2+\cdots+z_{m+2}^2=0\}.
$$
$Q^{m*}$ is equipped with the induced metric, which we still denote by $g$, and the induced complex structure, which we still denote by $J$, then $(Q^{m*}, g, J)$ is a K\"{a}hler manifold.

The inverse image of $Q^{m*}$ under the Hopf fibration is given by
$$
V^{2m+1*}_1=\{u+\mathbf{i}v|u,v\in \mathbb{R}^{m+2}_2,\langle u,u\rangle_2=\langle v,v\rangle_2=-\frac{1}{2},
\langle u,v\rangle_2=0\}\subset \mathbb{H}^{2m+3}_3(-1).
$$
Then $Q^{m*}$ is a Riemannian submanifold of $\mathbb{C}H^{m+1}_1(-4)$, where the normal space
$T^\perp_{[z]}Q^{m*}$ is spanned by $(d\pi)_z(\bar{z})$ and $J(d\pi)_z(\bar{z})=(d\pi)_z(\mathbf{i}\bar{z})$.

%From \cite{M-R}, we know that for any unit normal vector field along
%$Q^{m*}$ in $\mathbb{C}H^{m+1}_1(-4)$, one can associate a field of shape operator.
%Let $\mathfrak{A}$ be the set of all such fields of shape operators. In summary, we have

We denote by $\mathfrak{A}$ the set of all shape operators of $Q^{m*}$ in $\mathbb{C}H^{m+1}_1(-4)$  associated with unit normal vector fields, and $\mathfrak{A}$ is a collection of $(1,1)$-tensor fields on $Q^{m*}$.
Then, $\mathfrak{A}$ is an $S^1$-subbundle of the endomorphism
bundle End($TQ^{m*}$). In summary, we have

\begin{lemma}[cf. \cite{BS-2022,K-S,M-R,V-W}]\label{lemma:2.1}
For each $A\in\mathfrak{A}$, it holds that
\begin{equation}\label{eqn:2.1}
A^2=Id, \ \ g(AX,Y)=g(X,AY), \ \ AJ=-JA, \ \ \forall X,Y\in TQ^{m*}.
\end{equation}
\end{lemma}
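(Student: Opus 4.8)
The three relations say precisely that each $A \in \mathfrak{A}$ is a symmetric, complex-antilinear involution (a real structure) on the tangent space; my plan is to realize one such operator as complex conjugation in homogeneous coordinates and then propagate the relations over the whole $S^1$-bundle $\mathfrak{A}$. Since the statement must hold for \emph{every} $A\in\mathfrak{A}$, I will first reduce to a single base shape operator and treat the rest by an algebraic propagation.

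Because $Q^{m*}$ is a complex hypersurface of $\mathbb{C}H^{m+1}_1(-4)$, its real normal space at $[z]$ is two-dimensional, spanned by a unit normal $\zeta_0$ and $J\zeta_0$. Writing $A_0 := S_{\zeta_0}$ for the associated shape operator, the general complex-submanifold identity $h(JX,Y) = Jh(X,Y)$ yields $S_{J\zeta_0} = -A_0 J$, so every $A \in \mathfrak{A}$ has the form $A_\theta = \cos\theta\, A_0 - \sin\theta\, A_0 J$. It then suffices to prove \eqref{eqn:2.1} for $A_0$: granted $A_0^2 = \mathrm{Id}$, $A_0 J = -J A_0$ and $g(A_0 X, Y) = g(X, A_0 Y)$, a short computation (using $A_0 J A_0 = -J$ and hence $A_0 J A_0 J = \mathrm{Id}$) shows that the same three relations hold verbatim for every $A_\theta$.

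The core step is to identify $A_0$ with a conjugation. I would write the defining equation of $Q^{m*}$ as $\Phi(z) = B(z,z) = 0$, where $B(z,w) = -z_1 w_1 - z_2 w_2 + z_3 w_3 + \cdots + z_{m+2}w_{m+2}$ is the complex-bilinear form related to the Hermitian form $H$ (with $g=\re H$) by $B(z,w) = H(z,\bar w)$. Lifting through the Hopf fibration $\pi$, the horizontal lift of a unit normal at $[z]$ is a multiple of the conjugate $\bar z$, and I would compute the Weingarten map of $Q^{m*}\subset\mathbb{C}H^{m+1}_1(-4)$ by differentiating this normal field along horizontal tangent directions and projecting back to $T_{[z]}Q^{m*}$. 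The outcome, which is the crux of the argument, is that $A_0$ coincides with the tangential part of the complex conjugation $C\colon w\mapsto \bar w$ of $\mathbb{C}^{m+2}_2$; here one must verify that $C$ preserves the relevant horizontal tangent space so that the projection is unambiguous.

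The three relations for $A_0$ then follow from elementary properties of $C$: the involution $A_0^2 = \mathrm{Id}$ from $C^2 = \mathrm{Id}$; the anticommutation $A_0 J = -J A_0$ from the antilinearity $C(\mathbf{i}w) = -\mathbf{i}\,Cw$; and the symmetry $g(A_0 X, Y) = g(X, A_0 Y)$ from $\re H(\bar X, Y) = \re H(\bar Y, X)$. I expect the genuine obstacle to be the Gauss--Weingarten computation through the Hopf fibration in the pseudo-Riemannian space $\mathbb{C}H^{m+1}_1(-4)$: one must correctly track the signs introduced by the index-$2$ inner product and by the horizontal lift in order to pin down the normal direction as $\bar z$ and the shape operator as $C$. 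By contrast, the reduction to $A_0$ and the verification of the conjugation identities are routine.
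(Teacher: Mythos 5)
Your proposal cannot be matched line-by-line against the paper, because the paper does not prove Lemma \ref{lemma:2.1} at all: it quotes the identities \eqref{eqn:2.1} as a known structural fact, deferring to \cite{BS-2022,K-S,M-R,V-W}, and those references contain essentially the argument you outline. So your attempt should be judged on its own, and it is correct in structure. The reduction/propagation step is complete as written: for a complex hypersurface of a (pseudo-)K\"ahler manifold one indeed has $S_{J\zeta}=-S_{\zeta}J$, so each fibre of $\mathfrak{A}$ is $\{\cos\theta\,A_0-\sin\theta\,A_0J\}$, and your identities $A_0JA_0=-J$ and $A_0JA_0J=\mathrm{Id}$ do propagate \eqref{eqn:2.1} from $A_0$ to every $A_\theta$; note also that the symmetry $g(AX,Y)=g(X,AY)$ is automatic for \emph{every} $A\in\mathfrak{A}$, since shape operators are always self-adjoint, so only the involutive and anticommuting properties really need the conjugation model. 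The crux you leave as a computation, namely that $A_0$ is realized by the conjugation $C\colon w\mapsto\bar w$, is exactly what the cited references establish (Smyth's computation for $Q^m$, adapted by Montiel--Romero and Berndt--Suh to the indefinite setting), and the verification you flag does go through cleanly: with $B(z,w)=H(z,\bar w)$, on the cone over the quadric one has $H(\bar z,z)=\overline{B(z,z)}=0$, so $\bar z$ is horizontal, and for a horizontal tangent vector $X$ (characterized by $H(X,z)=0$ and $B(z,X)=0$) one computes $H(\bar X,z)=\overline{B(z,X)}=0$ and $B(z,\bar X)=\overline{H(X,z)}=0$; thus $C$ swaps the two defining equations of the horizontal space and hence preserves it, so no tangential projection is actually needed, and \eqref{eqn:2.1} then follows from $C^2=\mathrm{Id}$, antilinearity, and $H(\bar X,Y)=\overline{B(X,Y)}=H(\bar Y,X)$. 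What your text still owes, as you yourself anticipate, is the Weingarten computation through the pseudo-Riemannian Hopf fibration identifying the shape operator with $C$ up to sign and normalization; since the normal plane spanned by $(d\pi)_z(\bar z)$ and $(d\pi)_z(\mathbf{i}\bar z)$ is negative definite and any sign ambiguity is absorbed by the circle $\mathfrak{A}$, this step is routine but is the one place where the proof is a sketch rather than complete.
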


Lemma \ref{lemma:2.1} implies in
particular that $\mathfrak{A}$ is a family of {\it almost product structures}
on $Q^{m*}$ and we will use this term in this paper.
Furthermore, for each almost product structure $A\in\mathfrak{A}$, there exists a
one-form $q$ on $Q^{m*}$ such that it holds the relation
\begin{equation}\label{eqn:2.2}
(\bar{\nabla}_XA)Y=q(X)JAY,\ \ \forall\, X,Y\in TQ^{m*},
\end{equation}
where $\bar{\nabla}$ denotes the Levi-Civita connection of $Q^{m*}$.

The Gauss equation for the complex hypersurface $Q^{m*}\hookrightarrow \mathbb{C}H^{m+1}_1(-4)$
implies that the Riemannian curvature tensor $\bar{R}$ of $Q^{m*}$ can be expressed in terms
of the Riemannian metric $g$, the complex structure $J$ and a generic almost product
structure $A\in\mathfrak{A}$ as follows:
\begin{equation}\label{eqn:2.3}
\begin{aligned}
\bar{R}(X,Y)Z=&-g(Y,Z)X+g(X,Z)Y\\
&-g(JY,Z)JX+g(JX,Z)JY+2g(JX,Y)JZ\\
&-g(AY,Z)AX+g(AX,Z)AY\\
&-g(JAY,Z)JAX+g(JAX,Z)JAY,
\end{aligned}
\end{equation}
where, it should be noted that $\bar{R}$ is independent of the special choice of $A\in\mathfrak{A}$.

%A real $2$-dimensional linear subspace $\sigma$
%of $T_{[z]}Q^{m*}$ is called a $2$-flat if the curvature tensor $\bar{R}$ of $Q^{m*}$ vanishes
%identically on $\sigma$. A nonzero tangent vector $W\in T_{[z]}Q^{m*}$ is called
%$\mathfrak{A}$-singular if it is contained in more than one $2$-flat.
According to \cite{BS-2022}, there are two types of singular tangent vectors for $Q^{m*}$, namely,
$\mathfrak{A}$-principal and $\mathfrak{A}$-isotropic tangent vectors, described as
follows: %(cf. \cite{K-S,M-R}):
\begin{enumerate}[1.]
\item%[1.]
If there exists an almost product structure $A\in \mathfrak{A}_{[z]}$ such that $W\in V(A)$,
then $W$ is singular. Here, we denote $V(A)$ as
the eigenspace of $A$ corresponding to $1$. Such $W\in T_{[z]}Q^{m*}$ is called an
$\mathfrak{A}$-principal tangent vector.

\item%[2.]
If there exists an almost product structure $A\in \mathfrak{A}_{[z]}$ and orthonormal
vectors $X,Y\in V(A)$ such that $\tfrac{W}{\|W\|}=\tfrac{X+JY}{\sqrt{2}}$, then $W$ is
singular. In this case we have $g(AW,W)=0$ and $W\in T_{[z]}Q^{m*}$ is
called an $\mathfrak{A}$-isotropic tangent vector.
\end{enumerate}
For some further results about the complex hyperbolic quadric, we refer the readers to \cite{BS-2022,K-S,M-R,V-W}.

\subsection{Real hypersurfaces of the complex hyperbolic quadric}\label{sect:2.2}~

Let $M$ be a real hypersurface of $Q^{m*}$ and $N$ its unit normal vector field. For any
tangent vector field $X$ of $M$, we have the decomposition
\begin{equation}\label{eqn:2.4}
JX=\phi X+ \eta(X)N,
\end{equation}
where $\phi X$ and $\eta(X)N$ are, respectively, the tangent and normal parts of
$JX$. Then $\phi$ is a tensor field of type $(1,1)$, and $\eta$ is a $1$-form on $M$.
By definition, $\phi$ and $\eta$ satisfy the following relations:
\begin{equation}\label{eqn:2.5}
\left\{
\begin{aligned}
&\eta(X)=g(X,\xi),\ \ \eta(\phi X)=0,\ \ \phi^2X=-X+\eta(X)\xi,\\
&g(\phi X,Y)=-g(X,\phi Y),\ \ g(\phi X,\phi Y)=g(X,Y)-\eta(X)\eta(Y),
\end{aligned}\right.
\end{equation}
where $\xi:=-JN$ is called the {\it Reeb vector field} of $M$. The equations in
\eqref{eqn:2.5} show that $\{\phi,\xi,\eta\}$ determines an {\it almost contact
structure} on $M$.

Let $\nabla$ be the induced connection on $M$ with $R$ its Riemannian curvature
tensor. The formulas of Gauss and Weingarten state that
\begin{equation}\label{eqn:2.6}
\begin{split}
\bar \nabla_X Y=\nabla_X Y + g(SX,Y)N,\quad \bar \nabla_X N=-
S X , \ \ \forall\, X,Y \in TM,
\end{split}
\end{equation}
where $S$ is the shape operator of $M\hookrightarrow Q^{m*}$. Let $H={\rm Tr}S$
be the mean curvature of $M$. By using \eqref{eqn:2.6},
we can derive that
\begin{equation}\label{eqn:2.7}
\nabla_X \xi=\phi SX.
\end{equation}

The Gauss and Codazzi equations of $M$ are given by
\begin{equation}\label{eqn:2.8}
\begin{split}
R(X,Y)Z=&-g(Y,Z)X+g(X,Z)Y-g(\phi Y,Z)\phi X+g(\phi X,Z)\phi Y\\
 &+2g(\phi X,Y)\phi Z-g(AY,Z)(AX)^\top+g(AX,Z)(AY)^\top\\
 &-g(JAY,Z)(JAX)^\top+g(JAX,Z)(JAY)^\top\\
 &+g(SZ,Y)SX-g(SZ,X)SY,
 \end{split}
\end{equation}
and
\begin{equation}\label{eqn:2.9}
\begin{split}
(\nabla_X S)Y-(\nabla_Y S)X=&-\eta(X)\phi Y+\eta(Y)\phi X+2g(\phi X,Y)\xi\\
&-g(X,AN)(AY)^\top+g(Y,AN)(AX)^\top\\
 &\qquad -g(X,A\xi)(JAY)^\top +g(Y,A\xi)(JAX)^\top,
\end{split}
\end{equation}
where $\cdot^\top$ means the tangential part.

%From \eqref{eqn:2.8}, contracting $Y$ and $Z$, we get the Ricci tensor of
%$M$:
%
%\begin{equation}\label{eqn:2.10}
%\begin{split}
%\text{Ric}(X)=&-(2m-1)X+3\eta(X)\xi+g(AN,N)(AX)^\top-g(AX,N)(AN)^\top\\
%&+g(A\xi,N)(JAX)^\top-g(A\xi,X)(A\xi)^\top+HSX-S^2X,
%\end{split}
%\end{equation}
%where $H={\rm tr}\,S$ denotes the mean curvature of the hypersurface $M$.

The tangent bundle $TM$ of $M$ splits orthogonally into
$TM=\mathcal{C}\oplus \mathcal{F}$, where $\mathcal{C}=\text{ker}(\eta)$
is the maximal complex subbundle of $TM$ and $\mathcal{F}=\mathbb{R}\xi$.
When restricted to $\mathcal{C}$, the structure tensor field $\phi$
coincides with the complex structure $J$. Moreover, at each point
$[z]\in M$, the set
$$
\mathcal{Q}_{[z]}=\{X\in T_{[z]}M\mid AX\in T_{[z]}M\ \text{for all}\ A\in \mathfrak{A}_{[z]}\}
$$
defines a maximal $\mathfrak{A}_{[z]}$-invariant subspace of $T_{[z]}M$.

The following definitions can be found in Berndt and Suh \cite{BS-2022,Suh4}.

\begin{definition}[\cite{BS-2022,Suh4}]\label{def:2.1}
Let $M$ be a real hypersurface of $Q^{m*}$ ($m\geq3$) with unit normal vector field $N$.

{\rm (a)} If there exists an almost product structure $A\in \mathfrak{A}$
such that $AN=N$ everywhere, then we say that $N$ is $\mathfrak{A}$-principal on $M$,
and $M$ has $\mathfrak{A}$-principal unit normal vector field $N$.

{\rm (b)} If there exists an almost product structure $A\in\mathfrak{A}$
such that $AN,A\xi\in \mathcal{C}$ everywhere, then we say that
$N$ is $\mathfrak{A}$-isotropic on $M$, and $M$ has $\mathfrak{A}$-isotropic unit normal vector field $N$.
\end{definition}

\begin{remark}\label{rem:2.1}
If there exists an almost product structure $A_0\in\mathfrak{A}$
such that $A_0N,A_0\xi\in \mathcal{C}$ everywhere, then for any almost product structure $A\in\mathfrak{A}$,
it also holds $AN,A\xi\in \mathcal{C}$ everywhere.
\end{remark}

\vskip 1mm
In later sections, we also need the following lemmas on Hopf hypersurfaces in $Q^{m*}$.
Assume that $M$ is a Hopf hypersurface of $Q^{m*}$ with Reeb vector field $\xi$ satisfying
$\nabla_\xi\xi=0$ and $S\xi=\alpha \xi$, here the function $\alpha$ is called the Reeb
function. Then, we have

\begin{lemma}[\cite{BS-2022}]\label{lemma:2.3}
Let $M$ be a Hopf hypersurface in $Q^{m*}$ with unit normal vector field
$N$ and $S\xi=\alpha\xi$. %Then for all vector fields $X,Y\in TM$, it holds
%
%\begin{equation}\label{eqn:2.12}
%X\alpha=(\xi\alpha)\eta(X)-2g(X,AN)g(A\xi,\xi).
%\end{equation}
%
Then for any vector fields $X,Y\in \mathcal{Q}$, it holds
\begin{equation}\label{eqn:2.13}
\begin{aligned}
2g(S\phi SX,Y)-\alpha g((\phi S+S\phi)X,Y)+2g(\phi X,Y)=0.
\end{aligned}
\end{equation}
\end{lemma}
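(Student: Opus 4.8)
The plan is to extract \eqref{eqn:2.13} from the Codazzi equation \eqref{eqn:2.9} by differentiating the Hopf condition and then antisymmetrizing in the two arguments. First I would differentiate $S\xi = \alpha\xi$ covariantly along an arbitrary $X$ and use \eqref{eqn:2.7}, namely $\nabla_X\xi = \phi SX$, to obtain
\begin{equation*}
(\nabla_X S)\xi = (X\alpha)\xi + \alpha\phi SX - S\phi SX.
\end{equation*}
Taking the inner product with a vector $Y$ then expresses $g(S\phi SX, Y)$ in terms of $g(\phi SX, Y)$ and $g((\nabla_X S)\xi, Y)$, the last of which is the quantity I want to control through Codazzi.

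Next I would specialize \eqref{eqn:2.9} to the case where the second argument is $\xi$. Using $\phi\xi = 0$, $\eta(\xi) = 1$ and $g(\phi X, \xi) = 0$, the right-hand side collapses to $\phi X$ together with four terms built from the almost product structure $A$ (through $AN$, $A\xi$, $AX$ and $JAX$). I would take the inner product of this identity with $Y$ and subtract the same expression with $X$ and $Y$ interchanged. Because $\nabla_\xi S$ is self-adjoint, the terms $g((\nabla_\xi S)X, Y)$ and $g((\nabla_\xi S)Y, X)$ cancel, leaving only $g((\nabla_X S)\xi, Y) - g((\nabla_Y S)\xi, X)$ on the left. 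Substituting the formula from the previous step and using the symmetries of $S$ and $\phi$ — concretely $g(S\phi SY, X) = -g(S\phi SX, Y)$, $g(\phi SY, X) = -g(S\phi X, Y)$ and $g(\phi Y, X) = -g(\phi X, Y)$ — the left-hand side becomes
\begin{equation*}
\big[(X\alpha)\eta(Y) - (Y\alpha)\eta(X)\big] - 2g(S\phi SX, Y) + \alpha g\big((\phi S + S\phi)X, Y\big).
\end{equation*}

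It remains to evaluate the antisymmetrized right-hand side, and here the hypothesis $X, Y \in \mathcal{Q}$ is decisive. Since $\mathcal{Q}$ is $\mathfrak{A}$-invariant, $AX, AY \in TM$, so $g(X, AN) = g(AX, N) = 0$ while the tangential projections simplify to $(AX)^\top = AX$ and $(JAX)^\top = \phi AX$. Using in addition that $A\xi$ and $AN$ are orthogonal to $\mathcal{Q}$, and that $A$ anticommutes with $\phi$ on $\mathcal{Q}$ as a consequence of $AJ = -JA$ in Lemma \ref{lemma:2.1}, each of the four $A$-terms either vanishes or cancels against its partner under antisymmetrization, so the right-hand side reduces to $2g(\phi X, Y)$. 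Finally, reducing to $X, Y \in \mathcal{Q} \cap \mathcal{C}$ (the case where $X$ or $Y$ is proportional to $\xi$ makes \eqref{eqn:2.13} trivially hold, since $\phi\xi = 0$ and $S\xi = \alpha\xi$) annihilates the terms $(X\alpha)\eta(Y) - (Y\alpha)\eta(X)$, and equating the two sides yields \eqref{eqn:2.13} after multiplying by $-1$.

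The main obstacle I anticipate is the bookkeeping for the $A$-terms: one must verify carefully, from the structure of $\mathcal{Q}$ and the relations in Lemma \ref{lemma:2.1}, that the four terms constructed from $AN$, $A\xi$ and their images under $J$ genuinely drop out after antisymmetrization, rather than contributing extra curvature-type terms to \eqref{eqn:2.13}. Establishing the orthogonality of $\mathcal{Q}$ to $A\xi$ and $AN$ together with the anticommutation $\phi A = -A\phi$ on $\mathcal{Q}$ is the technical heart of the argument; everything else is a routine application of the Gauss--Weingarten and Codazzi formalism.
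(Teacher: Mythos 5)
Your proof is correct, but it takes a genuinely more self-contained route than the paper's. The paper disposes of the lemma in two lines: it quotes Lemma 7.6.1 of \cite{BS-2022} --- the general identity for Hopf hypersurfaces of $Q^{m*}$, valid for arbitrary tangent $X,Y$ and carrying extra terms built from $AN$ and $A\xi$ --- and then observes that $g(AN,X)=g(A\xi,X)=0$ for $X\in\mathcal{Q}$ wipes those extra terms out. What you do instead is re-derive the content of that cited lemma: differentiating $S\xi=\alpha\xi$, using \eqref{eqn:2.7}, feeding the result into the Codazzi equation \eqref{eqn:2.9} evaluated against $\xi$, and antisymmetrizing in $X,Y$ (via self-adjointness of $\nabla_\xi S$) is precisely the standard computation behind Lemma 7.6.1, here specialized to $\mathcal{Q}$; I have checked the signs and your left-hand side and the final multiplication by $-1$ come out right. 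On the bookkeeping you flag as the main obstacle, the cleanest accounting is: the two terms carrying the factors $g(X,AN)$, $g(Y,AN)$, $g(X,A\xi)$, $g(Y,A\xi)$ vanish because $\mathcal{Q}\perp AN$ and $\mathcal{Q}\perp A\xi$, while the two remaining terms, with coefficients $g(\xi,AN)$ and $g(\xi,A\xi)$, cancel under antisymmetrization simply because $A$ and $JA$ are symmetric operators (symmetry of $JA$ follows from $AJ=-JA$ and Lemma \ref{lemma:2.1}); this is the precise form of what you phrase as $\phi A=-A\phi$ on $\mathcal{Q}$. One fact that you, like the paper, assert rather than prove is $g(A\xi,X)=0$ for $X\in\mathcal{Q}$: it is not immediate from $AX\in TM$ alone, but follows from the $S^1$-bundle structure of $\mathfrak{A}$, since $JA\in\mathfrak{A}$ and $\mathfrak{A}$-invariance of $\mathcal{Q}$ give $JAX\in TM$, whence $g(AX,\xi)=g(JAX,N)=0$. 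The trade-off between the two proofs is the expected one: the paper's is shorter but opaque without the book in hand, while yours is self-contained and shows exactly where each structural term dies; your handling of the Reeb direction (restricting to $\mathcal{Q}\cap\mathcal{C}$ and checking the $\xi$-cases trivially, then invoking bilinearity) also correctly disposes of the $(X\alpha)\eta(Y)-(Y\alpha)\eta(X)$ terms, a step that is vacuous under the convention of \cite{BS-2022}, where $\mathcal{Q}\subset\mathcal{C}$ by definition.
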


\begin{proof}
By using the fact $g(AN,X)=g(A\xi,X)=0$ for any $X\in \mathcal{Q}$, 
\eqref{eqn:2.13} can be obtained directly from the Lemma 7.6.1 in \cite{BS-2022}.
\end{proof}

\begin{lemma}[Lemma 3.4 of \cite{WLS}]\label{lemma:2.5}
Let $M$ be a Hopf hypersurface in $Q^{m*}$ ($m\geq3$). If the Reeb function $\alpha$ is constant,
then $M$ has either $\mathfrak{A}$-principal unit normal vector field or $\mathfrak{A}$-isotropic unit normal vector field.
\end{lemma}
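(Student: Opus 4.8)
The plan is to analyze the constancy of the Reeb function $\alpha$ through the geometry of the almost product structure $A$ at a fixed point $[z]\in M$. The key object is the function $g(AN,N)$, which measures where $N$ sits relative to the singular directions: by Lemma \ref{lemma:2.1} we have $A^2=\mathrm{Id}$ and $AJ=-JA$, so $AN$ and $A\xi=-AJN=JAN$ are related by $J$, and the quantity $g(AN,N)^2+g(AN,\xi)^2$ controls whether $N$ is $\mathfrak{A}$-principal (when $AN=N$, i.e. this equals $1$ for a suitable choice of $A$) or $\mathfrak{A}$-isotropic (when $AN,A\xi\in\mathcal{C}$, i.e. $g(AN,N)=g(AN,\xi)=0$). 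First I would fix $A\in\mathfrak{A}_{[z]}$ adapted to $N$ and decompose $AN$ along $N$, $\xi$, and $\mathcal{C}$, writing $AN=g(AN,N)N+g(AN,\xi)\xi+(AN)^{\mathcal{C}}$, and similarly for $A\xi$. Since $\mathfrak{A}$ is an $S^1$-bundle, rotating $A$ within the fiber lets one normalize $g(AN,\xi)=0$, so the single scalar $t:=g(AN,N)\in[-1,1]$ captures the position of $N$; the two conclusions of the lemma correspond exactly to $|t|=1$ and $t=0$.

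The central computation is to differentiate the Reeb equation $S\xi=\alpha\xi$ and feed the result into the Codazzi equation \eqref{eqn:2.9}. Taking $X=\xi$ in \eqref{eqn:2.9} and pairing against a vector in $\mathcal{C}$ produces expressions in $\nabla\alpha$ on one side and terms involving $g(\xi,AN)$, $g(\xi,A\xi)$, $(AY)^\top$, $(JAY)^\top$ on the other. Using $\nabla_\xi\xi=0$ and \eqref{eqn:2.7}, together with the derivative relation \eqref{eqn:2.2} for $A$ (which introduces the one-form $q$), one obtains a differential identity for the scalar $t=g(AN,N)$ along $M$. The strategy is then to show that constancy of $\alpha$ forces $t$ to be locally constant as well, and moreover that the only values of $t$ compatible with the full system of Gauss--Codazzi constraints are $t\in\{-1,0,1\}$; the intermediate values are excluded because they would make the right-hand side of the Codazzi identity incompatible with $d\alpha=0$.

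Concretely, I would derive a first-order ODE of the schematic form $\xi(t)=f(t,\alpha)$ and an algebraic obstruction showing $f(t,\alpha)=0$ at points where $\alpha$ is constant, and then combine this with the splitting $AN=tN+\sqrt{1-t^2}\,W$ for a unit $W\in\mathcal{C}$ to argue that if $0<|t|<1$ on an open set, the Reeb function cannot stay constant. A clean way to package this is to consider the smooth function $h:=g(AN,N)^2=t^2$ and show that $\xi(h)$ and the second-order quantity are governed entirely by $\alpha$ and its covariant derivatives; constancy of $\alpha$ then pins $h$ to a constant, and the algebraic structure of \eqref{eqn:2.3}--\eqref{eqn:2.9} forces that constant to be $0$ or $1$, which are precisely the $\mathfrak{A}$-isotropic and $\mathfrak{A}$-principal cases.

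The hard part will be the bookkeeping in the middle step: the one-form $q$ from \eqref{eqn:2.2} couples the derivative of $A$ to $J$, so differentiating $g(AN,N)$ brings in both $\nabla N=-SX$ (from \eqref{eqn:2.6}) and $q(X)g(JAN,N)$ terms, and one must carefully track how the $\mathcal{C}$-component of $AN$ interacts with the shape operator under the Hopf condition. Isolating a genuinely closed identity for $t$, rather than one entangled with unknown components of $(AN)^{\mathcal{C}}$, is the main obstacle; I expect it to require using Lemma \ref{lemma:2.3} to eliminate the $S\phi S$ and $(\phi S+S\phi)$ terms on $\mathcal{Q}$, since those encode exactly the coupling between $S$ and $\phi$ that otherwise obstructs separating the equation into an autonomous evolution for $t$ alone.
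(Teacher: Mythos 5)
Your opening moves --- the structure-angle normalization ($t=g(AN,N)$ after rotating $A$ in the fiber so that $g(AN,\xi)=0$, with the dictionary $|t|=1\Leftrightarrow\mathfrak{A}$-principal, $t=0\Leftrightarrow\mathfrak{A}$-isotropic) and the idea of feeding the Hopf condition into the Codazzi equation \eqref{eqn:2.9} with one argument equal to $\xi$ --- are exactly the opening of the actual proof (the paper itself gives none; it cites Lemma 3.4 of \cite{WLS}). But there are two concrete problems with how you propose to finish. A computational one first: pairing the Codazzi equation against a vector $Z\in\mathcal{C}$ does \emph{not} produce $\nabla\alpha$. Since $(\nabla_Y S)\xi = Y(\alpha)\xi+\alpha\phi SY-S\phi SY$, the term carrying $Y(\alpha)$ is proportional to $\xi$ and is annihilated by $\mathcal{C}$; that contraction is precisely how one obtains Lemma \ref{lemma:2.3}/\eqref{eqn:2.13}, not a formula for $d\alpha$. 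One must pair with $\xi$, which yields
\begin{equation*}
X(\alpha)-(\xi\alpha)\,\eta(X)\;=\;-2\,g(A\xi,\xi)\,g(X,AN)\;+\;2\,g(AN,\xi)\,g(X,A\xi),\qquad X\in TM.
\end{equation*}
Second, and this is the genuine gap: your closing mechanism --- an ODE $\xi(t)=f(t,\alpha)$ pinning $t$ (or $h=t^{2}$) to a constant, followed by an appeal to ``the algebraic structure of \eqref{eqn:2.3}--\eqref{eqn:2.9}'' to force that constant into $\{0,1\}$ --- cannot work as stated, and it black-boxes the decisive step. The Reeb-flow ODE can be computed explicitly: fixing a smooth section $A$ and writing $u=g(AN,N)$, $v=g(AN,\xi)$, the relations \eqref{eqn:2.2}, \eqref{eqn:2.6}, \eqref{eqn:2.7} and $S\xi=\alpha\xi$ give $\xi(u)=(q(\xi)-2\alpha)v$ and $\xi(v)=-(q(\xi)-2\alpha)u$, so the fiber-invariant quantity $u^{2}+v^{2}$ is constant along the Reeb flow for \emph{every} Hopf hypersurface, whether or not $\alpha$ is constant. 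This evolution equation therefore contains no mechanism for excluding intermediate values of $t$. Moreover, the tool you nominate to untangle the remaining terms, Lemma \ref{lemma:2.3}, is inapplicable: \eqref{eqn:2.13} holds only for arguments in $\mathcal{Q}$, and the tangential part of $AN$ never lies in $\mathcal{Q}$, because $A(AN)=N$ is normal.

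What actually closes the argument is that, once $\alpha$ is constant, the displayed identity is already a \emph{pointwise algebraic} constraint: $g(A\xi,\xi)(AN)^{\top}=g(AN,\xi)(A\xi)^{\top}$. Since $AJ=-JA$ gives $A\xi=JAN$ and hence $g(A\xi,\xi)=-g(JAN,JN)=-g(AN,N)$, your normalization $g(AN,\xi)=0$ turns this into $g(AN,N)\,(AN)^{\top}=0$, while $\|(AN)^{\top}\|^{2}=1-g(AN,N)^{2}$; hence $t^{2}(1-t^{2})=0$ at every point, i.e.\ $N$ is singular pointwise ($t=0$ isotropic, $|t|=1$ principal). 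No evolution equation is needed: the constraint you set up in your second step, used pointwise, is essentially the whole proof. Finally, your plan also omits the connectedness step needed for the global dichotomy: the function $h=g(AN,N)^{2}+g(AN,\xi)^{2}$ is well defined (independent of the choice of $A$ in the fiber), continuous, and takes only the values $0$ and $1$ by the pointwise argument, so on the connected hypersurface $M$ it is identically $0$ or identically $1$; this is what converts ``singular at each point'' into ``$\mathfrak{A}$-principal everywhere or $\mathfrak{A}$-isotropic everywhere.''
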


%\begin{theorem}[cf. Theorem 5.2 of \cite{Loo11}]\label{thm:2.1}
%Let $M$ be a Hopf hypersurface in $Q^{m*},\ m\ge3$. Then, $M$ has $\mathfrak{A}$-principal normal if and only if
%$M$ is an open part of a tube over totally geodesic Lagrangian submanifold $S^{m}(1)\hookrightarrow Q^{m}$.
%\end{theorem}

%\vskip1mm
%To our knowledge, the problem of determining $\mathfrak{A}$-isotropic
%Hopf hypersurfaces in $Q^{m*}$ is still open.

For real hypersurfaces in $Q^{m*}\ (m\ge3)$ with $\mathfrak{A}$-principal unit normal vector field or $\mathfrak{A}$-isotropic unit normal vector field, according the properties of the almost product structure $A$ and the complex structure $J$, we have the following results.

\begin{lemma}\label{lemma:2.6}
Let $M$ be a real hypersurface in $Q^{m*}\ (m\ge3)$ with $\mathfrak{A}$-isotropic
unit normal vector field $N$. Then there exists an almost product structure $A\in\mathfrak{A}$
on $M$ such that $SAN=SA\xi=0$.
\end{lemma}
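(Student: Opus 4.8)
The plan is to exploit the structure of an $\mathfrak{A}$-isotropic unit normal together with the relations in Lemma \ref{lemma:2.1}. By Definition \ref{def:2.1}(b), there exists $A\in\mathfrak{A}$ with $AN,A\xi\in\mathcal{C}=\ker\eta$ everywhere; equivalently, $g(AN,N)=g(AN,\xi)=0$. First I would record the basic identities that follow from $A^2=\mathrm{Id}$, self-adjointness $g(AX,Y)=g(X,AY)$, and the anticommutation $AJ=-JA$. Since $\xi=-JN$, these give $A\xi=-AJN=JAN$, so $A\xi$ and $AN$ are related by $J$. The $\mathfrak{A}$-isotropic condition says precisely that both $AN$ and $A\xi=JAN$ are orthogonal to $N$ and to $\xi$, which is the hypothesis I will feed into the Weingarten computation.

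The heart of the argument is to compute the shape operator on these two vectors and show it annihilates them. I would differentiate the defining relation. From the complex-hypersurface structure, $N$ can be lifted and $AN$ is essentially obtained by a conjugation (reflection) of the normal data; the key is that the almost product structure $A$ acting on the normal direction produces a tangent vector whose shape-operator image must vanish. Concretely, I expect to start from the Codazzi-type identity \eqref{eqn:2.9} or directly from how $A$ interacts with $\bar\nabla N=-SX$ via \eqref{eqn:2.2}. Using $(\bar\nabla_XA)Y=q(X)JAY$, one differentiates $g(AN,N)=0$ and $g(AN,\xi)=0$ along suitable directions. The Hopf condition $S\xi=\alpha\xi$, together with $\nabla_X\xi=\phi SX$ from \eqref{eqn:2.7}, lets me convert derivatives of $\xi$ into algebraic shape-operator terms. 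The goal is to derive $g(SAN,X)=0$ for all $X\in TM$, and likewise for $SA\xi$.

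A cleaner route I would try first: show $AN$ and $A\xi$ lie in a $0$-eigenspace of $S$ by using Lemma \ref{lemma:2.3}. Because $AN,A\xi\in\mathcal{C}$ and, after checking that they lie in $\mathcal{Q}$ (the maximal $\mathfrak{A}$-invariant subspace — here $A(AN)=N\notin TM$, so one must instead use the appropriate component), one plugs the relevant vectors into \eqref{eqn:2.13}. I anticipate that the combination $2S\phi S-\alpha(\phi S+S\phi)+2\phi$ evaluated on $AN$ (using $\phi AN=JAN=A\xi$ since $AN\in\mathcal C$) collapses, via the isotropy relations $g(AN,\xi)=0$ and $A^2=\mathrm{Id}$, to force $SAN=0$ and $SA\xi=0$.

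The main obstacle will be the bookkeeping of which almost product structure is used and the fact that $A$ does not preserve $TM$: applying $A$ to tangent vectors produces normal components that must be tracked carefully through \eqref{eqn:2.4} and the $\cdot^\top$ projections in \eqref{eqn:2.8}--\eqref{eqn:2.9}. In particular, because $AN\in TM$ but $A(AN)=N\perp TM$, the naive substitution into $\mathfrak{A}$-invariance fails, so I must work with the tangential parts $(A\,\cdot\,)^\top$ and verify that the one-form $q$ contributions cancel. The delicate point is ensuring the chosen $A$ can be taken \emph{globally} consistent (using Remark \ref{rem:2.1}, any $A\in\mathfrak{A}$ satisfies the isotropy condition once one does), so that $SAN=SA\xi=0$ holds for a single well-defined $A$ on all of $M$ rather than pointwise for varying structures.
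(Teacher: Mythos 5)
Your second paragraph is, in substance, the paper's own proof: one differentiates the isotropy relations $g(AN,N)=0$ and $g(AN,\xi)=g(A\xi,N)=0$ along an arbitrary tangent direction $X$, expands $\bar\nabla_X(AN)=q(X)A\xi-ASX$ and $\bar\nabla_X(A\xi)=q(X)JA\xi+A\bar\nabla_X\xi$ using \eqref{eqn:2.2} and the Gauss--Weingarten formulas \eqref{eqn:2.6}, and the $q$-terms drop out because $g(A\xi,N)=g(A\xi,\xi)=0$, leaving $-2g(SAN,X)=0$ and $-2g(SA\xi,X)=0$; no Codazzi equation, no tangential projections $(\,\cdot\,)^\top$, and no bookkeeping beyond this are needed, since everything is paired directly against $N$. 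Two corrections, though. First, drop the Hopf condition $S\xi=\alpha\xi$: the lemma is stated for an \emph{arbitrary} real hypersurface with $\mathfrak{A}$-isotropic normal, and the computation only uses $\bar\nabla_X\xi=\phi SX+g(SX,\xi)N$, i.e.\ \eqref{eqn:2.7}, which holds for any real hypersurface of a K\"ahler manifold; if your proof genuinely invoked $S\xi=\alpha\xi$ you would only establish a weaker statement. Second, abandon the ``cleaner route'' via Lemma \ref{lemma:2.3}: as you yourself observe, $A(AN)=N$ is normal, so $AN,A\xi\notin\mathcal{Q}$ and \eqref{eqn:2.13} simply does not apply to them; moreover that lemma again presupposes the Hopf condition. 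That route is a dead end, not a shortcut, and the differentiation argument is already the shortest path.
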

\begin{proof}
The proof is totally similar to the situation of the real hypersurface in $Q^m$ with $\mathfrak{A}$-isotropic unit normal vector field (see Lemma 3.5 of \cite{Loo11}).
Since $M$ has $\mathfrak{A}$-isotropic unit normal vector field $N$, there exists
an almost product structure $A\in \mathfrak{A}$, such that $AN,A\xi\in \mathcal{C}$ everywhere,
so $\{N, \xi, AN, A\xi\}$ are
orthogonal to each other and $\phi AN=JAN=A\xi$. Then, for any $X\in TM$, from \eqref{eqn:2.2}, \eqref{eqn:2.6} and \eqref{eqn:2.7}, we have
$$
\begin{aligned}
0&=X[g(AN,N)]=g(\bar{\nabla}_X(AN),N)+g(AN,\bar{\nabla}_XN)\\
&=g(q(X)A\xi-ASX,N)-g(AN,SX)=-2g(SAN,X).
\end{aligned}
$$
It follows that $SAN=0$. %From \eqref{eqn:2.2qsq}, we also get $ASX\in TM$ for any $X\in TM$.
Similarly, by direct calculation and using \eqref{eqn:2.2}, \eqref{eqn:2.6} and \eqref{eqn:2.7},
we obtain
$$
\begin{aligned}
0&=X[g(A\xi,N)]=g(\bar{\nabla}_X(A\xi),N)+g(A\xi,\bar{\nabla}_XN)\\
&=g(q(X)JA\xi+A\phi SX+g(SX,\xi)AN,N)-g(SA\xi,X)\\
&=-2g(SA\xi,X),\ \ \forall\, X\in TM.
\end{aligned}
$$
It follows that $SA\xi=0$.
\end{proof}

\begin{remark}\label{rem:2.2aaas}
From the fact that $\mathfrak{A}$ is an $S^1$-subbundle of the endomorphism
bundle End($TQ^{m*}$), then Lemma \ref{lemma:2.6} can also imply that, on a
real hypersurface $M$ of $Q^{m*}$ ($m\geq3$) with $\mathfrak{A}$-isotropic unit normal vector field $N$,
it holds that $SAN=SA\xi=0$ for any almost product structure $A\in\mathfrak{A}$.
\end{remark}

\begin{theorem}[Proposition 4.4 of \cite{KSW}]\label{thm:2.7}
Let $M$ be a Hopf hypersurface of $Q^{m*}$ ($m\geq3$) with $\mathfrak{A}$-principal unit normal vector field $N$. Then,
$M$ is either an open part of Example \ref{ex:3.1} or Example \ref{ex:3.2} or Example \ref{ex:3.3}.
\end{theorem}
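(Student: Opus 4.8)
The plan is to extract, from the $\mathfrak{A}$-principal condition, enough algebraic structure on the shape operator $S$ to pin down its eigenvalues, their multiplicities and their behaviour along the Reeb flow, and then to integrate this data to recognize $M$ as an open part of one of the three models. First I would record the purely algebraic consequences of $AN=N$. Since $AJ=-JA$ and $\xi=-JN$, one gets $A\xi=-\xi$; since $g(AX,N)=g(X,AN)=g(X,N)=0$ for $X\in TM$, the structure $A$ preserves $TM$, and a short computation with \eqref{eqn:2.4} gives $A\phi+\phi A=0$ on $TM$ together with $\eta(AX)=-\eta(X)$. In particular $A$ preserves $\mathcal{C}=\ker\eta$ and, testing the rotated structures in the $S^1$-fibre $\mathfrak{A}_{[z]}$, one finds that the maximal $\mathfrak{A}$-invariant subspace is exactly $\mathcal{Q}=\mathcal{C}$. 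Feeding $AN=N$ and $A\xi=-\xi$ into the Codazzi equation \eqref{eqn:2.9} collapses the $A$-terms: for $X,Y\in\mathcal{C}$ it reads $(\nabla_XS)Y-(\nabla_YS)X=2g(\phi X,Y)\xi$, exactly as in $\C H^m$, while the only surviving effect of $A$ sits in the mixed equation $(\nabla_XS)\xi-(\nabla_\xi S)X=\phi X-\phi AX$.

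Next I would establish constancy of the Reeb function $\alpha$ and the M\"obius relation among principal curvatures. Since $S\xi=\alpha\xi$ and $S(\mathcal{C})\subseteq\mathcal{C}$, Lemma \ref{lemma:2.3} applies on $\mathcal{Q}=\mathcal{C}$ and yields the operator identity $2S\phi S-\alpha(\phi S+S\phi)+2\phi=0$ there; evaluating it on a principal vector $X\in\mathcal{C}$ with $SX=\lambda X$ gives $(2\lambda-\alpha)S\phi X=(\alpha\lambda-2)\phi X$, so either $\phi X$ is principal with eigenvalue $\mu=(\alpha\lambda-2)/(2\lambda-\alpha)$, or $\lambda=\alpha/2$, which forces $\alpha=\pm2$, $\lambda=\pm1$. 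Taking the $\xi$-component of the mixed Codazzi equation shows $\nabla\alpha=(\xi\alpha)\xi$; the symmetry of the Hessian of $\alpha$ then yields $(\xi\alpha)(\phi S+S\phi)=0$ on $\mathcal{C}$, so $\alpha$ is constant unless $\phi S+S\phi=0$ on $\mathcal{C}$, and in that exceptional stratum the M\"obius relation forces all $\mathcal{C}$-eigenvalues to be $\pm1$, which I would dispose of separately.

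The heart of the argument, and the step that genuinely uses the almost product structure, is the analysis of the mixed equation $(\nabla_XS)\xi-(\nabla_\xi S)X=\phi X-\phi AX$ on principal vectors. Expanding the left side with $\nabla_X\xi=\phi SX$ (see \eqref{eqn:2.7}) and the Hopf relation, and decomposing $\mathcal{C}$ into the $\pm1$-eigenspaces of $A$, which $\phi$ interchanges and each of which has dimension $m-1$, I would read off both $\xi\lambda=0$ for every principal curvature and the admissible eigenvalue configurations: the term $-\phi AX$ splits the $\C H^m$ picture according to how the $S$-eigenspaces sit relative to $A$, producing precisely the three shape operators of Examples \ref{ex:3.1}--\ref{ex:3.3}. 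Combined with the M\"obius relation this fixes the eigenvalues and multiplicities and shows they are constant, so $M$ is isoparametric with explicitly known second fundamental form.

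Finally, with the full constant shape operator and the compatible $A$- and $J$-structure in hand, I would integrate the normal geodesics, equivalently compute the focal data through the Jacobi equation in $Q^{m*}$ using the curvature formula \eqref{eqn:2.3}, and invoke the fundamental/rigidity theorem for hypersurfaces of $Q^{m*}$ to conclude that $M$ is locally congruent to, hence an open part of, one of the models. I expect the main obstacle to be the middle step: proving that every principal curvature, not merely $\alpha$, is constant and sorting the $S$-eigenspaces against $A$, because the coupling term $-\phi AX$ mixes the $S$- and $A$-eigendecompositions and must be controlled simultaneously with the M\"obius pairing $\lambda\leftrightarrow\mu$; the degenerate strata where $2\lambda=\alpha$ or where $\phi S+S\phi=0$ will require separate bookkeeping.
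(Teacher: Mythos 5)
First, a remark on what you are being compared against: the paper does \emph{not} prove Theorem \ref{thm:2.7} at all --- it imports it verbatim as Proposition 4.4 of \cite{KSW} --- so your proposal has to stand on its own against the standard Berndt--Suh-type argument. Your preliminary reductions are correct and well executed: $A\xi=-\xi$, $A$ preserves $TM$ and $\mathcal{C}$, $A\phi+\phi A=0$, $\eta\circ A=-\eta$, $\mathcal{Q}=\mathcal{C}$, the collapse of the Codazzi equation \eqref{eqn:2.9} to $(\nabla_XS)Y-(\nabla_YS)X=2g(\phi X,Y)\xi$ on $\mathcal{C}$ with mixed equation $(\nabla_XS)\xi-(\nabla_\xi S)X=\phi X-\phi AX$, the M\"obius relation from Lemma \ref{lemma:2.3}, and $\nabla\alpha=(\xi\alpha)\xi$ with the Hessian-symmetry trick.

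However, the step you yourself call ``the heart of the argument'' is a genuine gap, and the tool you assign to it cannot do the job. Pairing the mixed Codazzi equation with a unit principal field $X\in\mathcal{C}$, $SX=\lambda X$, gives $\xi\lambda=-g(AX,\phi X)$, which is \emph{not} zero unless you already know how the $S$-eigenspaces sit relative to the $(\pm1)$-eigenspaces of $A$ --- exactly the information you are trying to extract; as sketched, the argument is circular. The missing idea is that your proposal never uses the structure equation \eqref{eqn:2.2} (the one-form $q$ appears nowhere): differentiating the defining identities $AN=N$ and $A\xi=-\xi$ along $M$ via \eqref{eqn:2.2}, \eqref{eqn:2.6} and \eqref{eqn:2.7} yields, for all $X\in TM$,
\[
ASX=SX-q(X)\xi,\qquad q(X)=2g(SX,\xi),\qquad A\phi SX=-\phi SX .
\]
For a Hopf hypersurface the first identity gives $S\mathcal{C}\subset\mathcal{C}^{+}:=\mathcal{C}\cap V(A)$; symmetry of $S$ then forces $S=0$ on the $(m-1)$-dimensional $(-1)$-eigenspace $\mathcal{C}^{-}$, and \eqref{eqn:2.13} applied to $X\in\mathcal{C}^{-}$ gives $\alpha\,S\phi X=2\phi X$, hence $\alpha\neq0$ and $S=\tfrac{2}{\alpha}\,\mathrm{Id}$ on $\mathcal{C}^{+}=\phi\,\mathcal{C}^{-}$. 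This pins the shape operator completely, and in particular annihilates your ``exceptional stratum'' $\phi S+S\phi=0$ (its $\mathcal{C}$-eigenvalues would be $\pm1$, impossible once $S$ kills $\mathcal{C}^{-}$) --- a stratum your plan promises to ``dispose of separately'' but never does, and which cannot be hand-waved, since hypersurfaces with $S\phi+\phi S=0$ genuinely exist in $Q^{m*}$ (Examples \ref{ex:3.6}--\ref{ex:3.10}), only with $\mathfrak{A}$-isotropic normal. Finally, your closing appeal to a ``fundamental/rigidity theorem for hypersurfaces of $Q^{m*}$'' is not available off the shelf: the clean conclusion is that $S\phi+\phi S=\tfrac{2}{\alpha}\phi$ with $\tfrac{2}{\alpha}$ a nonzero constant (constancy of $\alpha$ now follows from your own Hessian argument), i.e.\ $M$ is a contact hypersurface, whereupon the classification of contact hypersurfaces of $Q^{m*}$ (\cite{K-S}, \cite{BS-2022}) identifies $M$ as an open part of Example \ref{ex:3.1}, \ref{ex:3.2} or \ref{ex:3.3}.
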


\begin{theorem}[Theorem 1.1 of \cite{Suh4}]\label{thm:2.2}
Let $M$ be a real hypersurface of $Q^{m*}$ ($m\geq3$) with isometric Reeb flow.
Then, $M$ is either an open part of Example \ref{ex:3.4} or
Example \ref{ex:3.5}.
\end{theorem}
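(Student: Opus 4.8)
The plan is to translate the hypothesis into an algebraic condition on the shape operator, reduce to a Hopf hypersurface, invoke the dichotomy of Lemma~\ref{lemma:2.5}, and then determine the principal curvatures and recognise the explicit models. First I would show that the Reeb flow is isometric if and only if $S\phi=\phi S$. Indeed, since $\nabla_X\xi=\phi SX$ by \eqref{eqn:2.7}, one computes $(\mathcal{L}_\xi g)(X,Y)=g(\phi SX,Y)+g(X,\phi SY)=g\big(X,(\phi S-S\phi)Y\big)$, so the flow of $\xi$ consists of isometries exactly when $S\phi=\phi S$. Applying this identity to $\xi$ and using $\phi\xi=0$ gives $\phi S\xi=0$, hence $S\xi\in\ker\phi=\mathbb{R}\xi$; thus $M$ is Hopf with $S\xi=\alpha\xi$. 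A standard argument combining the Codazzi equation \eqref{eqn:2.9} with the Hopf condition then shows the Reeb function $\alpha$ is constant, so Lemma~\ref{lemma:2.5} applies and $N$ is either $\mathfrak{A}$-principal or $\mathfrak{A}$-isotropic.

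Next I would exclude the $\mathfrak{A}$-principal case. By Theorem~\ref{thm:2.7}, an $\mathfrak{A}$-principal Hopf hypersurface is an open part of Example~\ref{ex:3.1}, \ref{ex:3.2}, or \ref{ex:3.3}; inspecting the shape operators of these models shows that none of them satisfies $S\phi=\phi S$ (for instance, on a contact-type model the contact relation $S\phi+\phi S=2\kappa\phi$ together with $S\phi=\phi S$ would force $S=\kappa\,\mathrm{Id}$ on $\mathcal{C}$, which is incompatible with their principal curvature data). Hence $N$ must be $\mathfrak{A}$-isotropic.

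In the $\mathfrak{A}$-isotropic case I would read off the entire eigenstructure of $S$. By Lemma~\ref{lemma:2.6} there is $A\in\mathfrak{A}$ with $SAN=SA\xi=0$, so $\mathrm{span}\{AN,A\xi\}$ lies in the $0$-eigenspace. On $\mathcal{Q}$ the commuting relation turns Lemma~\ref{lemma:2.3} into $S\phi S-\alpha\phi S+\phi=0$, i.e. $\phi(S^2-\alpha S+\mathrm{Id})=0$; restricting to the $\phi$-invariant part $\mathcal{Q}\ominus\mathbb{R}\xi$, where $\phi^2=-\mathrm{Id}$, yields the operator identity $S^2-\alpha S+\mathrm{Id}=0$. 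Since $S$ is self-adjoint this forces $\alpha^2\geq4$, and the principal curvatures there are the real roots $\lambda_{1,2}=\tfrac12\big(\alpha\pm\sqrt{\alpha^2-4}\big)$, satisfying $\lambda_1\lambda_2=1$; the almost product structure $A$ interchanges the two eigenspaces (using the projection property of $A$ highlighted in the introduction), forcing them to have equal multiplicity $m-2$. Altogether the principal curvatures are $\alpha,0,\lambda_1,\lambda_2$ with multiplicities $1,2,m-2,m-2$, summing to $2m-1=\dim M$.

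Finally, $M$ has constant principal curvatures and is therefore isoparametric, and the eigendistribution decomposition above lets me integrate the distributions and exhibit $M$ as a tube of constant radius about a focal submanifold. When $\alpha^2=4$ one has $\lambda_1=\lambda_2$, giving three distinct curvatures, while $\alpha^2>4$ gives four; identifying the focal submanifold as a totally geodesic complex submanifold of $Q^{m*}$ then displays $M$ as an open part of Example~\ref{ex:3.4} or Example~\ref{ex:3.5}, respectively. The main obstacle is precisely this last reconstruction step: establishing integrability of the eigendistributions, controlling the parallel and focal geometry of the tube, and matching the resulting hypersurface to the explicit homogeneous models, rather than the preceding algebra, which is essentially forced by Lemmas~\ref{lemma:2.3} and~\ref{lemma:2.6}.
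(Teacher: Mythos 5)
The paper does not prove this statement at all: it is quoted verbatim as Theorem 1.1 of \cite{Suh4}, so your attempt has to stand on its own against Suh's original argument. Your opening reduction (isometric Reeb flow $\Leftrightarrow S\phi=\phi S$, hence Hopf) and the algebra in the $\mathfrak{A}$-isotropic case are correct: Lemma~\ref{lemma:2.6} gives $SAN=SA\xi=0$, Lemma~\ref{lemma:2.3} plus commutativity gives $S^2-\alpha S+\mathrm{Id}=0$ on $\mathcal{Q}$, and Lemma~\ref{lemma:5.1}(3) (the eigenvalues $\lambda_{1,2}=\tfrac12(\alpha\pm\sqrt{\alpha^2-4})$ satisfy $\lambda_1\lambda_2=1$, so neither is $\pm1$ when $\alpha>2$) does force $A$ to interchange the two $J$-invariant eigenspaces, giving equal multiplicities. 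The first genuine gap is the sentence ``a standard argument\dots shows the Reeb function $\alpha$ is constant.'' In $Q^{m*}$, unlike in nonflat complex space forms, Hopf does \emph{not} imply $\alpha$ constant: taking the $\xi$-component of the Codazzi equation \eqref{eqn:2.9} with $Y=\xi$ yields
\begin{equation*}
X\alpha=(\xi\alpha)\eta(X)-2g(A\xi,\xi)\,g(X,AN)+2g(AN,\xi)\,g(X,A\xi),
\end{equation*}
and the extra terms vanish precisely when $N$ is singular ($\mathfrak{A}$-principal or $\mathfrak{A}$-isotropic). Thus constancy of $\alpha$ essentially presupposes the singularity of $N$, which is exactly what you want to extract from Lemma~\ref{lemma:2.5}; as written, your argument is circular. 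This is why Suh's proof (and Berndt--Suh's in $Q^m$) derives the $\mathfrak{A}$-isotropy of $N$ \emph{directly} from $S\phi=\phi S$, using the full version of Lemma~\ref{lemma:2.3} before restriction to $\mathcal{Q}$ (the identity carrying the $A$-terms), and only afterwards concludes $d\alpha=(\xi\alpha)\eta$ and $\xi\alpha=0$ (e.g.\ from $d^2\alpha=0$ together with the invertibility of $S|_{\mathcal{Q}}$, whose determinant is a power of $\lambda_1\lambda_2=1$).

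The second gap is the final identification, which you rightly flag as the main obstacle but whose sketch does not work as stated. Your plan is to exhibit $M$ as a tube of constant radius about a focal submanifold, but Example~\ref{ex:3.4} is a horosphere and has \emph{no} focal submanifold: when $\alpha=2$ the principal curvatures on $\mathcal{Q}$ all equal $1$, and the Jacobi factors $\cosh(r)-\sinh(r)=e^{-r}$, $1$, and $\cosh(2r)-\sinh(2r)=e^{-2r}$ never vanish, so the focal map degenerates only ``at infinity.'' Recognizing $M$ as the horosphere therefore needs a separate argument, and this is where a substantial part of Suh's work lies. Even in the case $\alpha>2$, reconstructing $M$ as Example~\ref{ex:3.5} requires showing that the focal set is a totally geodesic complex submanifold congruent to $\mathbb{C}H^{k}\subset Q^{2k*}$ (in particular you must observe that each eigenspace is $J$-invariant, hence of even dimension $m-2$, forcing $m=2k$ --- a parity conclusion your write-up omits, and without which the statement of the theorem cannot even be matched). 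None of this follows routinely from isoparametricity; it is the heart of the proof rather than a loose end.
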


%==================================================
\section{Canonical examples of Hopf hypersurfaces with constant principal curvatures}\label{sect:3}

In this section, we briefly give the known homogeneous real hypersurfaces of $Q^{m*}$,
which are also Hopf hypersurfaces. Some examples are constructed by
using the fact that the complex hyperbolic quadric $Q^{m*}$ is a Hermitian symmetric space,
and $Q^{m*}$ is realized as the quotient manifold $SO^o_{2,m}/(SO_2\times SO_m)$.
For more details about the construction of the complex hyperbolic quadric $Q^{m*}$ as a Riemannian
symmetric space, we refer to \cite{B-2023,BS-2022,K-S}.

First of all, there are three types of homogeneous real hypersurfaces of $Q^{m*}$ with $\mathfrak{A}$-principal unit normal vector field. These examples were first presented in \cite{BS-2015}
and then studied in further detail in \cite{BS-2022,K-S}.

\begin{example}[Theorem 7.4.4 of \cite{BS-2022}]\label{ex:3.1}
Let $M$ be a horosphere in $Q^{m*}$ with its center at infinity being given by an
$\mathfrak{A}$-principal geodesic $\gamma$. Then the following statements hold:
\begin{enumerate}
\item[(1)]
$M$ is a Hopf hypersurface of $Q^{m*}$ with $\mathfrak{A}$-principal unit normal vector field,
its Reeb function is $\alpha=\sqrt{2}$.

\item[(2)]
$M$ has two distinct constant principal curvatures:
$$
\begin{tabular}{|c|c|c|}
	\hline
	% after \\: \hline or \cline{col1-col2} \cline{col3-col4} ...
	{\rm value} & $\sqrt{2}$ & $0$  \\
	\hline
	{\rm multiplicity} & $m$ & $m-1$ \\
	\hline
\end{tabular}
$$
\end{enumerate}
\end{example}

\begin{example}[Theorem 7.5.1 of \cite{BS-2022}]\label{ex:3.2}
Let $M$ be the tube with radius $r\in \mathbb{R}_+$ around the totally geodesic $Q^{m-1*}$ in $Q^{m*}$. For $M$ the following statements hold:
\begin{enumerate}
\item[(1)]
$M$ is a Hopf hypersurface of $Q^{m*}$ with $\mathfrak{A}$-principal unit normal vector field,
its Reeb function is $\alpha=\sqrt{2}\coth(\sqrt{2}r)$.

\item[(2)]
$M$ has three distinct constant principal curvatures:
$$
\begin{tabular}{|c|c|c|c|}
	\hline
	% after \\: \hline or \cline{col1-col2} \cline{col3-col4} ...
	{\rm value} & $\sqrt{2}\coth(\sqrt{2}r)$ & $0$ & $\sqrt{2}\tanh(\sqrt{2}r)$  \\
	\hline
	{\rm multiplicity} & $1$ & $m-1$ & $m-1$ \\
	\hline
\end{tabular}
$$
\end{enumerate}
\end{example}

\begin{example}[Theorem 7.5.3 of \cite{BS-2022}]\label{ex:3.3}
Let $M$ be the tube with radius $r\in \mathbb{R}_+$ around the totally geodesic $\mathbb{R}H^m$ in $Q^{m*}$. For $M$ the following statements hold:
\begin{enumerate}
\item[(1)]
$M$ is a Hopf hypersurface of $Q^{m*}$ with $\mathfrak{A}$-principal unit normal vector field,
its Reeb function is $\alpha=\sqrt{2}\tanh(\sqrt{2}r)$.

\item[(2)]
$M$ has three distinct constant principal curvatures:
$$
\begin{tabular}{|c|c|c|c|}
	\hline
	% after \\: \hline or \cline{col1-col2} \cline{col3-col4} ...
	{\rm value} & $\sqrt{2}\tanh(\sqrt{2}r)$ & $0$ & $\sqrt{2}\coth(\sqrt{2}r)$  \\
	\hline
	{\rm multiplicity} & $1$ & $m-1$ & $m-1$ \\
	\hline
\end{tabular}
$$
\end{enumerate}
\end{example}

Next, the following two types of homogeneous real hypersurfaces of $Q^{m*}$ with $\mathfrak{A}$-isotropic unit normal vector field were first presented in \cite{Suh4}
and then studied in further detail in \cite{BS-2022}. These examples are characterized as those real hypersurfaces with isometric Reeb flow.

\begin{example}[Theorem 7.4.2 of \cite{BS-2022}]\label{ex:3.4}
Let $M$ be the horosphere in $Q^{m*}$ whose center at infinity is in the equivalent class
of an $\mathfrak{A}$-isotropic singular geodesic in $Q^{m*}$. Then the following statements hold
\begin{enumerate}
\item[(1)]
$M$ is a Hopf hypersurface of $Q^{m*}$ with $\mathfrak{A}$-isotropic unit normal vector field,
its Reeb function is $\alpha=2$.

\item[(2)]
$M$ has three distinct constant  principal curvatures:
$$
\begin{tabular}{|c|c|c|c|}
	\hline
	% after \\: \hline or \cline{col1-col2} \cline{col3-col4} ...
	{\rm value} & $2$ & $0$ & $1$  \\
	\hline
	{\rm multiplicity} & $1$ & $2$ & $2m-4$ \\
	\hline
\end{tabular}
$$
\end{enumerate}
\end{example}

\begin{example}[Theorem 7.5.5 of \cite{BS-2022}]\label{ex:3.5}
Let $M$ be the tube with radius $r\in \mathbb{R}_+$ around the totally geodesic $\mathbb{C}H^k$ in $Q^{2k*}$,
$k\geq2$. Then the following statements hold
\begin{enumerate}
\item[(1)]
$M$ is a Hopf hypersurface of $Q^{2k*}$ with $\mathfrak{A}$-isotropic unit normal vector field,
its Reeb function is $\alpha=2\coth(2r)$.

\item[(2)]
It has four distinct constant  principal curvatures:
$$
\begin{tabular}{|c|c|c|c|c|}
	\hline
	% after \\: \hline or \cline{col1-col2} \cline{col3-col4} ...
	{\rm value} & $2\coth(2r)$ & $0$ & $\tanh(r)$  & $\coth(r)$\\
	\hline
	{\rm multiplicity} & $1$ & $2$ & $2k-2$ & $2k-2$\\
	\hline
\end{tabular}
$$
\end{enumerate}
\end{example}

Finally, Berndt \cite{B-2023} recently constructed a one-parameter family
$M_{\alpha}^{2m-1}$, $0\leq\alpha<+\infty$, of (pairwise noncongruent)
homogeneous Hopf hypersurfaces in $Q^{m*}$ ($m\geq3$), whose maximal complex subbundle of the
tangent bundle is integrable. These real hypersurfaces are the first known examples which satisfy the condition of $S\phi+\phi S=0$ in irreducible K\"ahler manifolds. Here we briefly describe the properties of these real hypersurfaces. For more details, see \cite{B-2023}.

\begin{example}[Theorem 6.1 of \cite{B-2023}]\label{ex:3.6}
Let $M^{2m-1}_0$ be the homogeneous real hypersurface in $Q^{m*}$ obtained by
canonical extension of the geodesic that is tangent to the root vector $H_{\alpha_1}$ in the boundary
component $B_1\cong \mathbb{C}H^1(-4)$ of $Q^{m*}$. Then the following statements hold
\begin{enumerate}
\item[(1)]
$M^{2m-1}_0$ is a Hopf hypersurface of $Q^{m*}$ with $\mathfrak{A}$-isotropic unit normal vector field,
its Reeb function is $\alpha=0$.

\item[(2)]
$M^{2m-1}_0$ has three distinct constant  principal curvatures:
$$
\begin{tabular}{|c|c|c|c|}
	\hline
	% after \\: \hline or \cline{col1-col2} \cline{col3-col4} ...
	{\rm value} & $0$ & $-1$  & $1$\\
	\hline
	{\rm multiplicity} & $3$ & $m-2$ & $m-2$\\
	\hline
\end{tabular}
$$
\end{enumerate}
\end{example}

\begin{example}[Theorem 7.1 of \cite{B-2023}]\label{ex:3.7}
Let $M^{2m-1}_0$ be the homogeneous real hypersurface in $Q^{m*}$ obtained by
canonical extension of the geodesic that is tangent to the root vector $H_{\alpha_1}$ in the boundary
component $B_1\cong \mathbb{C}H^1(-4)$ of $Q^{m*}$.
Let $M^{2m-1}_1$ be the equidistant real hypersurface at oriented distance $r=\frac{{\rm arctanh}(\frac{1}{2})}{2}$ from
$M^{2m-1}_0$. Then the following statements hold
\begin{enumerate}
\item[(1)]
$M^{2m-1}_1$ is a Hopf hypersurface of $Q^{m*}$ with $\mathfrak{A}$-isotropic unit normal vector field, its Reeb function is $\alpha=1$.

\item[(2)]
$M^{2m-1}_1$ has three distinct constant  principal curvatures:
$$
\begin{tabular}{|c|c|c|c|}
	\hline
	% after \\: \hline or \cline{col1-col2} \cline{col3-col4} ...
	{\rm value} & $1$ & $0$  & $-1$\\
	\hline
	{\rm multiplicity} & $m-1$ & $2$ & $m-2$\\
	\hline
\end{tabular}
$$
\end{enumerate}
\end{example}

\begin{example}[Theorem 7.1 of \cite{B-2023}]\label{ex:3.8}
Let $M^{2m-1}_0$ be the homogeneous real hypersurface in $Q^{m*}$ obtained by
canonical extension of the geodesic that is tangent to the root vector $H_{\alpha_1}$ in the boundary
component $B_1\cong \mathbb{C}H^1(-4)$ of $Q^{m*}$.
Let $M^{2m-1}_{2\tanh(2r)}$ be the equidistant real hypersurface at oriented distance $r\neq\frac{{\rm arctanh}(\frac{1}{2})}{2}$ from
$M^{2m-1}_0$. Then the following statements hold
\begin{enumerate}
\item[(1)]
$M^{2m-1}_{2\tanh(2r)}$ is a Hopf hypersurface of $Q^{m*}$ with $\mathfrak{A}$-isotropic unit normal vector field, its Reeb function is $\alpha=2\tanh(2r)$.

\item[(2)]
$M^{2m-1}_{2\tanh(2r)}$ has four distinct constant  principal curvatures:
$$
\begin{tabular}{|c|c|c|c|c|}
	\hline
	% after \\: \hline or \cline{col1-col2} \cline{col3-col4} ...
	{\rm value} & $2\tanh(2r)$ & $0$  & $-1$ & $1$\\
	\hline
	{\rm multiplicity} & $1$ & $2$ & $m-2$ & $m-2$\\
	\hline
\end{tabular}
$$
\end{enumerate}
\end{example}

In \cite{B-2023}, by extending a point in the boundary
component $B_1\cong \mathbb{C}H^1(-4)$, Berndt obtained an isometric embedding $P^{n-1}$ of the complex hyperbolic space $\mathbb{C}H^{m-1}(-4)$ into $Q^{m*}$ as a homogeneous complex hypersurface.
Then, the tubes of $P^{n-1}$ are homogeneous real hypersurfaces in $Q^{m*}$.

\begin{example}[Theorem 5.2 of \cite{B-2023}]\label{ex:3.9}
Let $P^{2m-1}_r$ be the tube with radius $r\in \mathbb{R}_+$ around the homogeneous complex
hypersurface $P^{m-1}\cong \mathbb{C}H^{m-1}(-4)$ in $Q^{m*}$. Then the following statements hold:
\begin{enumerate}
\item[(1)]
$P^{2m-1}_r$ is a Hopf hypersurface of $Q^{m*}$ with $\mathfrak{A}$-isotropic unit normal vector field, its Reeb function is $\alpha=2\coth(2r)$.

\item[(2)]
$P^{2m-1}_r$ has four distinct constant principal curvatures: 
$$
\begin{tabular}{|c|c|c|c|c|}
	\hline
	% after \\: \hline or \cline{col1-col2} \cline{col3-col4} ...
	{\rm value} & $2\coth(2r)$ & $0$  & $-1$ & $1$\\
	\hline
	{\rm multiplicity} & $1$ & $2$ & $m-2$ & $m-2$\\
	\hline
\end{tabular}
$$
\end{enumerate}
\end{example}

In \cite{B-2023}, by using the canonical extension of a horocycle in the boundary component
$B_1\cong \mathbb{C}H^1(-4)$, Berndt obtained the homogeneous real hypersurface $M^{2m-1}_2$.

\begin{example}[Theorem 8.1 of \cite{B-2023}]\label{ex:3.10}
The homogeneous real hypersurface $M^{2m-1}_2$ in $Q^{m*}$ satisfies the following properties:
\begin{enumerate}
\item[(1)]
$M^{2m-1}_2$ is a Hopf hypersurface of $Q^{m*}$ with $\mathfrak{A}$-isotropic unit normal vector field, its Reeb function is $\alpha=2$.

\item[(2)]
$M^{2m-1}_2$ has four distinct constant  principal curvatures:
$$
\begin{tabular}{|c|c|c|c|c|}
	\hline
	% after \\: \hline or \cline{col1-col2} \cline{col3-col4} ...
	{\rm value} & $2$ & $0$  & $-1$ & $1$\\
	\hline
	{\rm multiplicity} & $1$ & $2$ & $m-2$ & $m-2$\\
	\hline
\end{tabular}
$$
\end{enumerate}
\end{example}

\begin{remark}\label{rem:3.1}
Examples \ref{ex:3.1}, \ref{ex:3.4}, \ref{ex:3.6}, \ref{ex:3.7}, \ref{ex:3.8} and \ref{ex:3.10}
all have no focal submanifold. The maximal complex
subbundles of the tangent bundles of Examples \ref{ex:3.6}--\ref{ex:3.10} are integrable, and their integral manifolds are all the homogeneous complex hypersurface $P^{n-1}$.
\end{remark}

\begin{remark}\label{rem:3.2}
The maximal $\mathfrak{A}$-invariant subspace $\mathcal{Q}$ of Examples \ref{ex:3.6}--\ref{ex:3.10} are all $S$-invariant, and the eigenvalues of $S$ restricted to $\mathcal{Q}$ are $1$ and $-1$. Let $V_1$ and $V_{-1}$ be the eigenspaces restricted to $\mathcal{Q}$ corresponding to $1$ and $-1$.
Considering the important role played by the almost product structure $A$ in the study of real hypersurfaces of $Q^{m*}$, it is necessary to understand the relationship between the almost product structure $A$ and the shape operator $S$ on Examples \ref{ex:3.6}--\ref{ex:3.10}.
According to \cite{B-2023}, on each one of these examples, there exists an almost product structure
$A\in\mathfrak{A}$ such that $AV_1=V_1$ and $AV_{-1}=V_{-1}$.
\end{remark}

%==================================================
\section{Parallel hypersurfaces, focal submanifolds and Cartan's formulas}\label{sect:4}

%According to Lemma \ref{lemma:2.5}, we know that, if a Hopf hypersurface $M$ in $Q^{m*}$ $(m\ge3)$ has constant Reeb function, then $M$ has either $\mathfrak{A}$-principal unit normal vector field
%or $\mathfrak{A}$-isotropic unit normal vector field. Moreover,
%Hopf hypersurfaces in $Q^{m*}$ $(m\ge3)$ with $\mathfrak{A}$-principal unit normal vector field have been classified (see Theorem \ref{thm:2.7}). Thus, when we consider Hopf hypersurfaces of $Q^{m*}$ with constant principal curvatures, we only need to consider the Hopf hypersurfaces with constant principal curvatures and $\mathfrak{A}$-isotropic unit normal vector field.

%==================================================
\subsection{Parallel hypersurfaces and focal submanifold of the Hopf hypersurface with constant principal curvatures and $\mathfrak{A}$-isotropic unit normal vector field}\label{sect:4.1}~

In this subsection, we study the parallel hypersurfaces and the focal submanifold of the Hopf hypersurface of $Q^{m*}$ ($m\geq3$) with constant principal curvatures and $\mathfrak{A}$-isotropic unit normal vector field.

Let $M$ be a Hopf hypersurface of $Q^{m*}$ ($m\geq3$) with constant principal curvatures and $\mathfrak{A}$-isotropic unit normal vector field. Then $S\xi=\alpha\xi$. Up to a sign of the unit normal vector field $N$, we can assume that $\alpha\geq0$. By $SAN=SA\xi=0$
and $\mathcal{Q}=TM\ominus{\rm Span}\{\xi,AN,A\xi\}$,
we know that $\mathcal{Q}$ is $S$-invariant, $J$-invariant and $A$-invariant. We denote the set
of eigenvalues of $S$ restricted to $\mathcal{Q}$ by $\sigma(\mathcal{Q})$.
For any $\lambda\in\sigma(\mathcal{Q})$, let $V_\lambda$ be the corresponding eigenspace
restricted to $\mathcal{Q}$.

We define the map
$$
\Phi_r: M\rightarrow Q^{m*},\ \ \ \ \ p \mapsto \Phi_r(p)=\exp_{p}(rN_{p}), \quad \Phi_{0}(M)=M,
$$
where $p\in M$ and ${\rm exp}$ is the Riemannian exponential map of $Q^{m*}$
and $N$ is the unit normal vector field of $M$. Here, we allow that $r$ can be negative. If $r<0$, we means that $\Phi_r(M)$ is obtained by moving a distance $|r|$ from $M$ along the unit normal vector field $-N$. Let $\{E_1,\ldots, E_{2m-4},E_{2m-3}=AN,E_{2m-2}=A\xi,E_{2m-1}=\xi\}$ be an orthonormal basis at $p\in M$
such that $SE_i=\lambda_iE_i$ and $E_i\in\mathcal{Q}$ for $1\leq i\leq 2m-4$.
Let $\{E_1^r,\ldots, E_{2m-4}^r,E_{2m-3}^r,E_{2m-2}^r,E_{2m-1}^r\}$
be the parallel translation of $\{E_i\}_{i=1}^{2m-1}$ along the geodesic to the nearby parallel hypersurface $\Phi_r(M)$.
Then applying the standard Jacobi field theory,
we have the following proposition about the parallel hypersurfaces of $M$.

\begin{proposition}\label{prop:4.1w}
Let $M$ be a Hopf hypersurface of $Q^{m*}$ ($m\geq3$) with constant principal curvatures and $\mathfrak{A}$-isotropic unit normal vector field $N$.
Then, for any $p\in M$, the tangent map of $\Phi_r$ has the following expression:
\begin{equation}\label{eqn:rank}
\left(
  \begin{array}{c}
    d\Phi_r(E_1) \\
    \vdots \\
    d\Phi_r(E_{2m-4}) \\
    d\Phi_r(E_{2m-3}) \\
    d\Phi_r(E_{2m-2}) \\
    d\Phi_r(E_{2m-1}) \\
  \end{array}
\right)=(B_{ij})
\left(
  \begin{array}{c}
    E_1^r \\
    \vdots \\
    E^r_{2m-4} \\
    E^r_{2m-3} \\
    E^r_{2m-2} \\
    E^r_{2m-1} \\
  \end{array}
\right),
\end{equation}
where
\begin{equation}\label{Bij111}
(B_{ij})=
\left(
\begin{array}{cccccc}
\cosh(r)-\lambda_1\sinh(r) & & & & & \\
 & \ddots & & & & \\
 & & \cosh(r)-\lambda_{2m-4}\sinh(r) & & & \\
  & &  & 1 & & \\
   & &  & & 1  & \\
    & &  & & & \cosh(2r)-\frac{\alpha}{2}\sinh(2r) \\
\end{array}
\right).
\end{equation}
The determinant of $(B_{ij})$ is given by
\begin{equation}\label{eqn:det}
{\rm Det}(B_{ij})=\prod_{i=1}^{2m-4}\big(\cosh(r)-\lambda_i\sinh(r)\big)
\big(\cosh(2r)-\frac{\alpha}{2}\sinh(2r)\big).
\end{equation}

Furthermore, let $S_r$ be the shape operator of the parallel hypersurface $\Phi_r(M)$
with respect to the unit normal vector field $\frac{d\exp_{p}(rN_{p})}{dr}$, where $p\in M$.
Then the expression of $S_r$ at $\Phi_r(p)$ is given by
\begin{equation}\label{eqn:Arr}
\left(
  \begin{array}{c}
    S_rE_1^r \\
    \vdots \\
    S_r E_{2m-4}^r \\
    S_r E_{2m-3}^r \\
    S_r E_{2m-2}^r \\
    S_r E_{2m-1}^r \\
  \end{array}
\right)=
(C_{ij})
\left(
  \begin{array}{c}
    E_1^r \\
    \vdots \\
    E_{2m-4}^r \\
    E_{2m-3}^r \\
    E_{2m-2}^r \\
    E_{2m-1}^r \\
  \end{array}
\right),
\end{equation}
where
\begin{equation}\label{Bij222}
(C_{ij})=
\left(
\begin{array}{cccccc}
\frac{-\sinh(r)+\lambda_1\cosh(r)}{\cosh(r)-\lambda_1\sinh(r)} & & & & & \\
 & \ddots & & & & \\
 & & \frac{-\sinh(r)+\lambda_{2m-4}\cosh(r)}{\cosh(r)-\lambda_{2m-4}\sinh(r)} & & & \\
  & &  & 0 & & \\
   & &  & & 0 & \\
    & &  & & & \frac{-2\sinh(2r)+\alpha\cosh(2r)}{\cosh(2r)-\frac{\alpha}{2}\sinh(2r)} \\
\end{array}
\right).
\end{equation}
\end{proposition}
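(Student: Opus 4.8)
The plan is to realize each column $d\Phi_r(E_i)$ as a Jacobi field along the normal geodesic and to exploit that $Q^{m*}$ is a symmetric space, so that the Jacobi operator is parallel and the Jacobi equation decouples in the given parallel frame.

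First I would fix $p\in M$, set $\gamma(r)=\exp_p(rN_p)$ and choose a curve $p(s)\subset M$ with $p(0)=p$, $p'(0)=E_i$. The geodesic variation $H(s,r)=\exp_{p(s)}(rN_{p(s)})$ produces the Jacobi field $Y_i(r)=d\Phi_r(E_i)$ along $\gamma$, with initial data $Y_i(0)=E_i$ and, using the Weingarten formula \eqref{eqn:2.6}, $\tfrac{DY_i}{dr}(0)=\bar\nabla_{E_i}N=-SE_i$; thus $\tfrac{DY_i}{dr}(0)=-\lambda_iE_i$ for $1\le i\le 2m-4$, $=0$ for $E_i\in\{AN,A\xi\}$ (by $SAN=SA\xi=0$), and $=-\alpha\xi$ for $E_i=\xi$. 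Each $Y_i$ solves $\tfrac{D^2Y_i}{dr^2}+\bar R(Y_i,\gamma')\gamma'=0$ with $\gamma'$ the parallel field along $\gamma$ restricting to $N_p$ at $p$.

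The decisive step is to compute the Jacobi operator $\bar R(\,\cdot\,,N)N$ on the frame at $p$ from \eqref{eqn:2.3}, using $JN=-\xi$, the symmetry and $A^2=\mathrm{Id}$ from \eqref{eqn:2.1}, skew-symmetry of $J$, and the $\mathfrak A$-isotropic identities $g(AN,N)=0$, $AN,A\xi\in\mathcal C$, $JAN=A\xi$ established in the proof of Lemma \ref{lemma:2.6}. A direct substitution gives $\bar R(X,N)N=-X-3\eta(X)\xi+g(X,AN)AN+g(X,A\xi)A\xi$ for $X$ tangent to $M$, whence $\bar R(E_i,N)N=-E_i$ for $E_i\in\mathcal Q$, $\bar R(AN,N)N=\bar R(A\xi,N)N=0$, and $\bar R(\xi,N)N=-4\xi$. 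Since $Q^{m*}$ is symmetric, $\bar R(\,\cdot\,,\gamma')\gamma'$ is parallel along $\gamma$, so in the parallel frame $\{E_i^r\}$ its matrix stays diagonal with these same entries, and the Jacobi system decouples into scalar equations $f_i''-c_if_i=0$ with $c_i\in\{1,0,4\}$. Solving with the above initial data yields $f_i=\cosh r-\lambda_i\sinh r$ on $\mathcal Q$, $f=1$ for $AN,A\xi$, and $f=\cosh(2r)-\tfrac{\alpha}{2}\sinh(2r)$ for $\xi$; these are exactly the diagonal entries of $(B_{ij})$, and \eqref{eqn:det} follows by multiplying them.

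Finally, for $S_r$ I would use the standard identity $S_rY_i(r)=-\tfrac{DY_i}{dr}(r)$, which comes from interchanging covariant derivatives in the geodesic variation, $\tfrac{DY_i}{dr}=\tfrac{D}{ds}\gamma'=\bar\nabla_{Y_i}\nu_r=-S_rY_i$, where $\nu_r=\gamma'(r)$ is the unit normal of $\Phi_r(M)$. Since $Y_i(r)=f_i(r)E_i^r$ with $E_i^r$ parallel, this gives $S_rE_i^r=-\tfrac{f_i'(r)}{f_i(r)}E_i^r$ wherever $f_i(r)\ne0$, and substituting the explicit $f_i$ reproduces $(C_{ij})$ in \eqref{Bij222}. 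The main obstacle is the curvature bookkeeping in the decisive step: one must verify that all the cross terms of \eqref{eqn:2.3} vanish so that the Jacobi operator is genuinely diagonal in this frame, and track the single coefficient $-4$ on the $\xi$-line, responsible for the $2r$-behaviour, against the $0$ eigenvalue on the $AN,A\xi$-lines.
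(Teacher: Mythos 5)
Your proposal is correct and takes essentially the same approach as the paper: both realize $d\Phi_r$ via Jacobi fields along the normal geodesic, compute the normal Jacobi operator $\bar{R}(\cdot,N)N$ from \eqref{eqn:2.3} to get eigenvalues $-1,-4,0$ on $\mathcal{Q}$, $\mathrm{Span}\{\xi\}$, $\mathrm{Span}\{N,AN,A\xi\}$, and solve the resulting decoupled scalar ODEs with initial data $(I,-S)$. Your final identity $S_rY_i=-\tfrac{DY_i}{dr}$ is exactly the paper's $S_r=-D'(r)\circ D^{-1}(r)$ written componentwise in the parallel frame.
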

\begin{proof}
At $p\in M$, let $N_p$ be a unit normal vector of $M$. Since $M$ has $\mathfrak{A}$-isotropic unit normal vector field, the four vectors
$N_p$, $\xi_p$, $AN_p$, $A\xi_p$ are pairwise orthonormal and the normal Jacobi operator
is given by
$$
\bar{R}_NZ=\bar{R}(Z,N)N=-Z+g(Z,N)N-3g(Z,JN)JN+g(Z,AN)AN+g(Z,JAN)JAN.
$$
It implies that $\bar{R}_N$ has three eigenvalues $ -1, -4, 0$ with corresponding
eigenspaces $\mathcal{Q}$, ${\rm Span}\{\xi_p\}$ and ${\rm Span}\{N_p,AN_p,A\xi_p\}$.

In the following, we use the Jacobi field method as described in [\cite{B-C-O}, Sec. 8.2]
to calculate the principal curvatures of the parallel hypersurface $\Phi_r(M)$ around
$M$. Let $\gamma$ be the
geodesic in $Q^{m*}$ with $\gamma(0)=p\in M$ and $\dot{\gamma}(0)=N_p$ and denote by $\gamma^\perp$ the parallel
subbundle of $TQ^{m*}$ along $\gamma$ defined by $\gamma^\perp_{\gamma(t)}
=T_{\gamma(t)}Q^{m*}\ominus \mathbb{R}\dot{\gamma}(t)$. Moreover, define
the $\gamma^\perp$-valued tensor field $\bar{R}^\perp_{\gamma}$ along $\gamma$ by
$\bar{R}^\perp_{\gamma(t)}X=\bar{R}(X,\dot{\gamma}(t))\dot{\gamma}(t)$. Now consider
the End$(\gamma^\perp)$-valued differential equation
$$
Y''+\bar{R}^\perp_\gamma\circ Y=0.
$$
Let $D$ be the unique solution of this differential equation with initial values
$$
D(0)=I,\ \ D'(0)=-S,
$$
where $I$ denotes the identity transformation. We decompose $\gamma^\perp_p$ further into
$$
\gamma^\perp_p=\oplus_{i=1}^{2m-4} V_{\lambda_i}\oplus{\rm Span}\{AN_p,A\xi_p\}\oplus{\rm Span}\{\xi_p\}.
$$
By explicit computation, we obtain \eqref{eqn:rank}. Moreover, the
shape operator $S_r$ of the parallel hypersurface $\Phi_r(M)$ around $M$ with respect to
$\dot{\gamma}(r)$ is given by $S_r=-D'(r)\circ D^{-1}(r)$.
Then by further computation, we can have \eqref{eqn:Arr}.
\end{proof}

From Proposition \ref{prop:4.1w}, the parallel hypersurface of a Hopf hypersurface $M$ of $Q^{m*}$ with constant principal curvatures and $\mathfrak{A}$-isotropic unit normal vector field has constant mean curvature
\begin{equation}\label{eqn:Hr}
H(r)={\rm Tr}S_r=\sum_{i=1}^{2m-4}\Big(\frac{-\sinh(r)+\lambda_i\cosh(r)}{\cosh(r)-\lambda_i\sinh(r)}\Big)
+\frac{-2\sinh(2r)+\alpha\cosh(2r)}{\cosh(2r)-\frac{\alpha}{2}\sinh(2r)}.
\end{equation}

Recall that let $M$ be an orientable hypersurface in a Riemannian manifold $(\overline{M}, g)$.
We say that $M$ is an isoparametric hypersurface of $\overline{M}$ if 
there exists an isoparametric function $F:\overline{M} \rightarrow \mathbb{R}$ such that $M=F^{-1}(l)$ 
for some regular value $l$ of $F$. Here, $F$ is called an isoparametric function
if the gradient and the Laplacian of $F$ satisfy
$$
\|\nabla F\|^{2}=f_1(F), \quad \Delta F=f_2(F),
$$
where $f_1, f_2: \mathbb{R} \rightarrow \mathbb{R}$ are smooth functions. 
There is an equivalent characterization of isoparametric hypersurfaces: a hypersurface in a Riemannian manifold is isoparametric if and only if its nearby parallel hypersurfaces have constant mean curvature.
It follows from \eqref{eqn:Arr} and \eqref{eqn:Hr} that a Hopf hypersurface $M$ of $Q^{m*}$ with constant principal curvatures and $\mathfrak{A}$-isotropic unit normal vector field is an isoparametric hypersurface. Moreover, all its parallel hypersurfaces also have constant principal curvatures. 
Note that any Hopf hypersurface of $Q^{m*}$ with constant principal curvatures has either $\mathfrak{A}$-principal unit normal vector field or $\mathfrak{A}$-isotropic unit normal vector field. Those with  $\mathfrak{A}$-principal unit normal vector field are open parts of the homogeneous examples described in Example \ref{ex:3.1}, Example \ref{ex:3.2} and Example \ref{ex:3.3}. Since homogeneous real hypersurfaces are naturally isoparametric hypersurfaces, we obtain the following result:
\begin{theorem}\label{thm:4.2w}
Let $M$ be a Hopf hypersurface of $Q^{m*}$ ($m\geq3$) with constant principal curvatures. Then $M$ is an isoparametric hypersurface. Moreover, all parallel hypersurfaces of $M$ have constant principal curvatures.
\end{theorem}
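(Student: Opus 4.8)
The plan is to split the argument according to the type of unit normal vector field and reduce everything to the Jacobi-field computation already recorded in Proposition \ref{prop:4.1w} together with the structural classification of Theorem \ref{thm:2.7}. First I would observe that the hypotheses of Lemma \ref{lemma:2.5} are met: since every principal curvature of $M$ is constant, in particular the Reeb function $\alpha$ is constant, because $\alpha$ is precisely the principal curvature attached to the eigenvector $\xi$ (i.e. $S\xi=\alpha\xi$). Hence Lemma \ref{lemma:2.5} applies and yields a clean dichotomy: $M$ has either $\mathfrak{A}$-principal or $\mathfrak{A}$-isotropic unit normal vector field.

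In the $\mathfrak{A}$-principal case I would simply invoke Theorem \ref{thm:2.7}, by which $M$ is an open part of one of the homogeneous Examples \ref{ex:3.1}, \ref{ex:3.2}, \ref{ex:3.3}. A homogeneous real hypersurface is an orbit of a cohomogeneity-one isometric action, so its parallel hypersurfaces are the neighbouring orbits of the same action; these are again homogeneous and therefore have constant principal curvatures, and in particular constant mean curvature. By the characterization recalled above (a hypersurface is isoparametric if and only if its nearby parallel hypersurfaces have constant mean curvature), $M$ is isoparametric, and the parallel-hypersurface claim holds as well.

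In the $\mathfrak{A}$-isotropic case I would apply Proposition \ref{prop:4.1w} directly. By \eqref{eqn:Arr}--\eqref{Bij222}, the shape operator $S_r$ of the parallel hypersurface $\Phi_r(M)$ is diagonal in the parallel-translated frame, with diagonal entries
$$
\frac{-\sinh(r)+\lambda_i\cosh(r)}{\cosh(r)-\lambda_i\sinh(r)}\ \ (1\le i\le 2m-4),\qquad 0,\qquad 0,\qquad \frac{-2\sinh(2r)+\alpha\cosh(2r)}{\cosh(2r)-\tfrac{\alpha}{2}\sinh(2r)}.
$$
Since the $\lambda_i$ and $\alpha$ are constants, these entries depend only on $r$; hence for each fixed $r$ in the range where $\Phi_r$ is an immersion (equivalently, where the determinant \eqref{eqn:det} is nonzero) the parallel hypersurface $\Phi_r(M)$ has constant principal curvatures. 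Their sum is the mean curvature $H(r)$ of \eqref{eqn:Hr}, which is likewise a function of $r$ alone, so all nearby parallel hypersurfaces have constant mean curvature and the same characterization gives that $M$ is isoparametric. As the two cases are exhaustive, the theorem follows.

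The hard part is not in this argument but in Proposition \ref{prop:4.1w}, whose proof rests on the normal Jacobi operator $\bar{R}_N$ having eigenvalues $-1,-4,0$ and on the resulting explicit solution of the End$(\gamma^\perp)$-valued Jacobi equation. Granting that, the only genuine points requiring care are the reduction showing that the Reeb function is constant so that Lemma \ref{lemma:2.5} is applicable, and the bookkeeping ensuring that the displayed formulas describe honest parallel hypersurfaces across the relevant range of $r$ (that is, that one stays away from the focal values where $\mathrm{Det}(B_{ij})=0$). The homogeneous case is then essentially formal.
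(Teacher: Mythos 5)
Your proposal is correct and follows essentially the same route as the paper: the dichotomy from Lemma \ref{lemma:2.5} (using that the Reeb function is a principal curvature, hence constant), Theorem \ref{thm:2.7} plus homogeneity of Examples \ref{ex:3.1}--\ref{ex:3.3} in the $\mathfrak{A}$-principal case, and Proposition \ref{prop:4.1w} together with the constant-mean-curvature characterization of isoparametric hypersurfaces in the $\mathfrak{A}$-isotropic case. The only difference is that you spell out details the paper leaves implicit (why homogeneous hypersurfaces and their parallel hypersurfaces, as orbits of a cohomogeneity-one action, have constant principal curvatures, and the restriction to $r$ avoiding focal values), which is harmless.
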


In the following, we study the focal submanifold (if it exists) of a Hopf hypersurface of $Q^{m*}$ with constant principal curvatures and $\mathfrak{A}$-isotropic unit normal vector field.
If $\Phi_r(M)$ is a focal
submanifold of $M$, then it holds ${\rm Det}(B_{ij})=0$ on $\Phi_r(M)$. In order to find the distance $|r|$, we observe the factors $\cosh(r)-\lambda_i\sinh(r)$ for $1\leq i\leq2m-4$ and $\cosh(2r)-\frac{\alpha}{2}\sinh(2r)$,
and find out the closest distance $|r|$ from $0$ (if it exists) such that one of these factors equals to $0$.
%Without loss of generality, up to a sign of $N$, we always assume that $\alpha\geq0$.
Recall that $\lambda_i\in \sigma(\mathcal{Q})$, $1\leq i\leq2m-4$.
Now, if $\alpha>2$ we define
$\lambda_+={\rm max}\{\lambda_i,\frac{\alpha+\sqrt{\alpha^2-4}}{2}|\lambda_i>0\}$ and $\lambda_-={\rm min}\{\lambda_i,-1|\lambda_i<0\}$; If $0\leq\alpha\leq2$, we define
$\lambda_+={\rm max}\{\lambda_i,1|\lambda_i>0\}$ and $\lambda_-={\rm min}\{\lambda_i,-1|\lambda_i<0\}$.

If $\lambda_+>1$ (or $\lambda_-<-1$), then $M$ has a focal submanifold. In that case,
the closest distance $r_1$ (or $|r_2|$) satisfies $\coth(r_1)=\lambda_+$ and $r_1>0$ (or $\coth(r_2)=\lambda_-$ and $r_2<0$). %Here, the distance $r<0$ is with respect to $-N$.

For the distance $r_1$ (if it exists), according to the computation of Proposition \ref{prop:4.1w}, $M_+:=\Phi_{r_1}(M)$ is a focal
submanifold of $M$. If $0\leq\alpha\leq2$ and $\lambda_+>1$, then
${\rm dim}M_+=2m-{\rm dim}V_{\lambda_+}-1$;
If $\alpha>2$ and $\lambda_+>\frac{\alpha+\sqrt{\alpha^2-4}}{2}$, then
${\rm dim}M_+=2m-{\rm dim}V_{\lambda_+}-1$; If $\alpha>2$ and $\lambda_+=\frac{\alpha+\sqrt{\alpha^2-4}}{2}$,
then ${\rm dim}M_+=2m-{\rm dim}V_{\lambda_+}-2$.
Here, $V_{\lambda_+}$ is the corresponding eigenspace
restricted to $\mathcal{Q}$.
%By the Jacobi field method as used
%in Proposition \ref{prop:4.1w}, we take $r=r_1$, one can get the principal curvatures of
%$M_+$. Now, according to Proposition \ref{prop:4.1w} and Theorem \ref{thm:4.2w}, we use Theorem 1.2 of \cite{GT}, and we have
We recall that a submanifold is said to be austere if its multiset of principal curvatures is invariant under change of sign. In particular, austere submanifolds are automatically minimal.
This concept was introduced by Harvey-Lawson \cite{HL} for constructing special
Lagrangian submanifolds in $\mathbb{C}^n$.
Now, we have the following result about $M_+$.

\begin{proposition}\label{prop:4.3w}
Let $M$ be a Hopf hypersurface of $Q^{m*}$ ($m\geq3$) with constant principal curvatures and $\mathfrak{A}$-isotropic unit normal vector field $N$. If $\lambda_+>1$, then the focal submanifold $M_+$ has constant principal curvatures with respect to any unit normal vector field, and $M_+$ is austere.
Moreover, if $\alpha>2$ and $\lambda_+>\frac{\alpha+\sqrt{\alpha^2-4}}{2}$, then
the constant principal curvatures of $M_+$ are
$$
0,\ \ \ \alpha+\frac{(\alpha^2-4)\lambda_+}{\lambda_+^2-\alpha\lambda_++1},
\ \ \ \ \frac{\lambda_+\lambda_i-1}{\lambda_+-\lambda_i},\ \  {\rm where}\ \  \lambda_i<\lambda_+.
$$
If $\alpha>2$ and $\lambda_+=\frac{\alpha+\sqrt{\alpha^2-4}}{2}$, then
the constant principal curvatures of $M_+$ are
$$
0,\ \ \ \frac{\lambda_+\lambda_i-1}{\lambda_+-\lambda_i},\ \  {\rm where}\ \  \lambda_i<\lambda_+.
$$
If $0\leq\alpha\leq2$ and $\lambda_+>1$, then
the constant principal curvatures of $M_+$ are
$$
0,\ \ \ \alpha+\frac{(\alpha^2-4)\lambda_+}{\lambda_+^2-\alpha\lambda_++1},
\ \ \ \ \frac{\lambda_+\lambda_i-1}{\lambda_+-\lambda_i},\ \  {\rm where}\ \  \lambda_i<\lambda_+.
$$
For each of the above situations, it holds that
$$
\frac{\lambda_+\lambda_i-1}{\lambda_+-\lambda_i}
<\frac{\lambda_+\lambda_j-1}{\lambda_+-\lambda_j},
\ \ {\rm for}\ \ \lambda_i<\lambda_j<\lambda_+.
$$
\end{proposition}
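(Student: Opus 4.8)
The plan is to separate the qualitative assertions (constant principal curvatures with respect to every unit normal, and the austere property) from the explicit computation of the curvatures and their ordering. For the qualitative part I would invoke Theorem~\ref{thm:4.2w}: $M$ is isoparametric and all of its parallel hypersurfaces carry constant principal curvatures, so the theorem of Ge-Tang \cite{GT} applies to the focal submanifold $M_+$ (which exists because $\lambda_+>1$) and yields that $M_+$ is austere and has constant principal curvatures with respect to any unit normal vector field. The rest of the proof is the computation of these curvatures, for which the data of Proposition~\ref{prop:4.1w} suffices.

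First I would determine which tangent directions of $M$ become normal at $q=\Phi_{r_1}(p)$. The focal distance is fixed by the vanishing of a diagonal entry of \eqref{Bij111}; since $\coth(r_1)=\lambda_+$, the factor $\cosh r-\lambda_+\sinh r$ vanishes and $V_{\lambda_+}$ always collapses. For the $\xi$-direction I would use the factorization
$$\cosh(2r)-\frac{\alpha}{2}\sinh(2r)=\left(\cosh r-\frac{\alpha+\sqrt{\alpha^2-4}}{2}\sinh r\right)\left(\cosh r-\frac{\alpha-\sqrt{\alpha^2-4}}{2}\sinh r\right),$$
valid for $\alpha\geq2$, which shows that the $\xi$-factor $\cosh(2r_1)-\frac{\alpha}{2}\sinh(2r_1)$ vanishes exactly when $\lambda_+^2-\alpha\lambda_++1=0$, i.e. precisely in the borderline situation $\alpha>2$, $\lambda_+=\frac{\alpha+\sqrt{\alpha^2-4}}{2}$. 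This trichotomy produces the three cases of the statement, and counting the surviving directions gives the asserted dimensions of $M_+$.

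Next I would read the radial shape operator off \eqref{Bij222}: on each surviving direction the principal curvature of $M_+$ with respect to $\dot{\gamma}(r_1)$ is the limiting value $C_{ii}(r_1)$. For $E_i\in V_{\lambda_i}$ with $\lambda_i\neq\lambda_+$, substituting $\coth(r_1)=\lambda_+$ gives $\frac{\lambda_+\lambda_i-1}{\lambda_+-\lambda_i}$; the directions $AN,A\xi$ give $0$; and whenever $\xi$ survives, inserting $\cosh(2r_1)=\frac{\lambda_+^2+1}{\lambda_+^2-1}$ and $\sinh(2r_1)=\frac{2\lambda_+}{\lambda_+^2-1}$ (both forced by $\coth(r_1)=\lambda_+$) and simplifying returns $\alpha+\frac{(\alpha^2-4)\lambda_+}{\lambda_+^2-\alpha\lambda_++1}$. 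This reproduces the three curvature lists. For the ordering I would put $f(t)=\frac{\lambda_+t-1}{\lambda_+-t}$ and compute $f'(t)=\frac{\lambda_+^2-1}{(\lambda_+-t)^2}>0$, using $\lambda_+>1$, so that $\lambda_i<\lambda_j<\lambda_+$ yields $f(\lambda_i)<f(\lambda_j)$.

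The main obstacle is conceptual rather than computational: the austere property does not drop out of the explicit spectrum by a naive symmetry. Indeed, relation \eqref{eqn:2.13} of Lemma~\ref{lemma:2.3} forces $J$ to carry $V_\lambda$ onto $V_{(\alpha\lambda-2)/(2\lambda-\alpha)}$, but a short computation shows that $f(\lambda)+f\!\left(\frac{\alpha\lambda-2}{2\lambda-\alpha}\right)=0$ holds only when $\lambda=\pm1$ or $\lambda_+^2-\alpha\lambda_++1=0$; thus the eigenvalues pair into $\pm$ partners automatically only in the borderline case, and for a general spectrum in cases (1) and (3) the austere conclusion genuinely rests on the Ge-Tang theorem. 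The one routine point to verify with care is the Jacobi-field justification (already embedded in Proposition~\ref{prop:4.1w}) that the focal shape operator is given by the surviving entries $C_{ii}(r_1)$, with $V_{\lambda_+}$ (and $\xi$ in the borderline case) passing into the normal bundle of $M_+$.
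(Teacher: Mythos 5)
Your proposal is correct and takes essentially the same route as the paper: Theorem \ref{thm:4.2w} combined with the Ge--Tang theorem gives austereness and constancy of the principal curvatures with respect to any unit normal, the Jacobi-field data of Proposition \ref{prop:4.1w} evaluated at $r=r_1$ (with $\coth(r_1)=\lambda_+$) yields the explicit spectrum in each of the three cases, and the positivity of the derivative of $t\mapsto\frac{\lambda_+t-1}{\lambda_+-t}$ gives the ordering. Your additional details --- the factorization of the $\xi$-factor $\cosh(2r)-\frac{\alpha}{2}\sinh(2r)$ identifying exactly when $\xi$ collapses, the substitutions $\cosh(2r_1)=\frac{\lambda_+^2+1}{\lambda_+^2-1}$, $\sinh(2r_1)=\frac{2\lambda_+}{\lambda_+^2-1}$, and the observation that the naive pairing $f(\lambda)+f\bigl(\tfrac{\alpha\lambda-2}{2\lambda-\alpha}\bigr)=0$ fails outside the borderline case --- are all accurate and merely flesh out what the paper compresses into ``by further computation.''
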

\begin{proof}
According to Proposition \ref{prop:4.1w} and Theorem \ref{thm:4.2w},
$M$ is an isoparametric hypersurface,
and all parallel hypersurfaces of $M$ have constant principal curvatures.
Then, by Theorem 1.2 obtained by Ge-Tang \cite{GT}, the focal submanifold $M_+$ is austere.
Furthermore, by the Jacobi field method as used in Proposition \ref{prop:4.1w} and  taking $r=r_1$,we can get that the constant principal curvatures of $M_+$ with respect to any unit normal vector field are given as above.

If we fix $\lambda_+$ , then the function
$\frac{{\lambda_+}\lambda_i-1}{{\lambda_+}-\lambda_i}$ is increasing with respect to $\lambda_i$
for $\lambda_i<\lambda_+$.
\end{proof}

For the distance $|r_2|$, $r_2<0$ (if it exists), according to the computation of Proposition \ref{prop:4.1w}, $M_-:=\Phi_{r_2}(M)$ is a focal
submanifold of $M$. If $\lambda_-<-1$, then
${\rm dim}M_-=2m-{\rm dim}V_{\lambda_-}-1$.
Here, $V_{\lambda_-}$ is the corresponding eigenspace
restricted to $\mathcal{Q}$.
%By the Jacobi field method as used
%in Proposition \ref{prop:4.1w}, we take $r=r_2$, then we can get the principal curvatures of
%$M_-$. Now, according to Proposition \ref{prop:4.1w} and Theorem \ref{thm:4.2w}, we use Theorem 1.2 of %\cite{GT}, and we have
Then, similar as the proof of Proposition \ref{prop:4.3w}, we have the following result about $M_-$.

\begin{proposition}\label{prop:4.4w}
Let $M$ be a Hopf hypersurface of $Q^{m*}$ ($m\geq3$) with constant principal curvatures and $\mathfrak{A}$-isotropic unit normal vector field $N$. If $\lambda_-<-1$, then the focal submanifold $M_-$ has constant principal curvatures with respect to any unit normal vector field, and $M_-$ is austere.
Moreover, the constant principal curvatures of $M_-$ are
$$
0,\ \ \ \alpha+\frac{(\alpha^2-4){\lambda_-}}{\lambda_-^2-\alpha{\lambda_-}+1}, \ \ \ \ \frac{{\lambda_-}\lambda_i-1}{{\lambda_-}-\lambda_i}\ \  {\rm where}\ \  {\lambda_-}<\lambda_i.
$$
For this situation, it holds that
$$
\frac{{\lambda_-}\lambda_i-1}{{\lambda_-}-\lambda_i}
<\frac{{\lambda_-}\lambda_j-1}{{\lambda_-}-\lambda_j},
\ \ {\rm for}\ \ {\lambda_-}<\lambda_i<\lambda_j.
$$
\end{proposition}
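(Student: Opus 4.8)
The plan is to follow verbatim the strategy of the proof of Proposition \ref{prop:4.3w}, now working at the \emph{negative} focal distance $r_2<0$ determined by $\coth(r_2)=\lambda_-$. Since $\lambda_-<-1$, such an $r_2$ exists and is, by construction, the value of $r$ closest to $0$ at which one of the factors of $\mathrm{Det}(B_{ij})$ in \eqref{eqn:det} vanishes, namely $\cosh(r)-\lambda_-\sinh(r)$. First I would invoke Theorem \ref{thm:4.2w} to recall that $M$ is isoparametric and that all its parallel hypersurfaces have constant principal curvatures; Theorem 1.2 of Ge--Tang \cite{GT} then applies exactly as in Proposition \ref{prop:4.3w} and yields that the focal submanifold $M_-$ is austere and has constant principal curvatures with respect to every unit normal vector field. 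This is the step that secures the qualitative conclusions of the statement independently of the particular normal direction.

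To pin down the explicit values, I would re-run the Jacobi field computation of Proposition \ref{prop:4.1w} at $r=r_2$. The factor $\cosh(r)-\lambda_-\sinh(r)$ vanishes precisely on $V_{\lambda_-}$, so these directions collapse under $d\Phi_{r_2}$ and become normal to $M_-$, while the parallel translates of $V_{\lambda_i}$ ($\lambda_i\neq\lambda_-$), $AN$, $A\xi$ and $\xi$ span $TM_-$. Writing $\coth(r_2)=\lambda_-$ as $\cosh(r_2)=\lambda_-\sinh(r_2)$ and substituting into the diagonal entries of $(C_{ij})$ in \eqref{Bij222}, the $AN$ and $A\xi$ entries give $0$, each $V_{\lambda_i}$ entry collapses to $\frac{\lambda_-\lambda_i-1}{\lambda_--\lambda_i}$, and the $\xi$-entry $\frac{-2\sinh(2r_2)+\alpha\cosh(2r_2)}{\cosh(2r_2)-\frac{\alpha}{2}\sinh(2r_2)}$ simplifies, using $\cosh(2r_2)=(\lambda_-^2+1)\sinh^2(r_2)$ and $\sinh(2r_2)=2\lambda_-\sinh^2(r_2)$, to $\frac{\alpha\lambda_-^2-4\lambda_-+\alpha}{\lambda_-^2-\alpha\lambda_-+1}=\alpha+\frac{(\alpha^2-4)\lambda_-}{\lambda_-^2-\alpha\lambda_-+1}$. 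These are exactly the values listed in the statement.

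For the monotonicity assertion I would regard $g(x)=\frac{\lambda_-x-1}{\lambda_--x}$ as a function of $x$ with $\lambda_-$ fixed, and compute $g'(x)=\frac{\lambda_-^2-1}{(\lambda_--x)^2}$. Since $\lambda_-<-1$ forces $\lambda_-^2-1>0$, we get $g'(x)>0$, so $g$ is strictly increasing and $\frac{\lambda_-\lambda_i-1}{\lambda_--\lambda_i}<\frac{\lambda_-\lambda_j-1}{\lambda_--\lambda_j}$ whenever $\lambda_-<\lambda_i<\lambda_j$.

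I do not anticipate a serious obstacle, since the argument is structurally identical to Proposition \ref{prop:4.3w}; the only point requiring care is the logical dependence of the phrase ``with respect to any unit normal vector field'' on the Ge--Tang theorem, because the direct Jacobi field computation delivers the shape operator only along the single normal direction $\dot\gamma(r_2)$, and it is the austere and isoparametric structure that propagates constancy of the principal curvatures to the full normal bundle of $M_-$.
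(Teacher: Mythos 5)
Your proposal is correct and follows essentially the same route as the paper: the paper proves Proposition \ref{prop:4.4w} by referring back to the proof of Proposition \ref{prop:4.3w}, i.e., Theorem \ref{thm:4.2w} plus Ge--Tang's Theorem 1.2 for austerity and constancy of the principal curvatures, the Jacobi field computation of Proposition \ref{prop:4.1w} at the focal distance $r_2$ (with $\coth(r_2)=\lambda_-$) for the explicit values, and the monotonicity of $x\mapsto\frac{\lambda_- x-1}{\lambda_- - x}$ for the ordering. Your explicit substitutions (including the simplification of the $\xi$-entry to $\alpha+\frac{(\alpha^2-4)\lambda_-}{\lambda_-^2-\alpha\lambda_-+1}$) and the derivative computation $g'(x)=\frac{\lambda_-^2-1}{(\lambda_--x)^2}>0$ check out.
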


\begin{remark}\label{rem:4.1aa}
If $\lambda_+>\frac{\alpha+\sqrt{\alpha^2-4}}{2}$ (or $\lambda_-<-1$), then to order the principal curvatures of $M_+$ (or $M_-$) in increasing magnitude, it suffices to analyze the relationship between $\alpha+\frac{(\alpha^2-4){\lambda_+}}{{\lambda_+}^2-\alpha{\lambda_+}+1}$ and $\frac{{\lambda_+}\lambda_i-1}{{\lambda_+}-\lambda_i}$ (or between $\alpha+\frac{(\alpha^2-4){\lambda_-}}{{\lambda_-}^2-\alpha{\lambda_-}+1}$ and $\frac{{\lambda_-}\lambda_i-1}{{\lambda_-}-\lambda_i}$). Subsequently, leveraging the fact that $M_+$ (or $M_-$) is austere, one can derive the specific relations among the principal curvatures $\{\alpha,0,\lambda_1,...,\lambda_{2m-4}\}$ of the original hypersurface $M$. 
%This idea will be used in the proof of Theorem \ref{thm:4.4} below.
\end{remark}

%==================================================
\subsection{Cartan's formulas for Hopf hypersurfaces with constant principal curvatures and $\mathfrak{A}$-isotropic unit normal vector field}\label{sect:4.2}~

Let $M$ be a Hopf hypersurface with constant principal curvatures and $\mathfrak{A}$-isotropic unit normal vector field $N$.
Suppose tangent vector fields $X,Y\in TM$ satisfy $SX=\lambda X$ and $SY=\mu Y$,
then for any tangent vector $Z$, it holds
$$
g((\nabla_Z S)X,Y)=(\lambda-\mu)g(\nabla_Z X,Y).
$$

Since $\mathcal{Q}$ is $S$-invariant, we denote the set
of eigenvalues of $S$ restricted to $\mathcal{Q}$ by $\sigma(\mathcal{Q})$.
For any $\lambda\in\sigma(\mathcal{Q})$, let $V_\lambda$ be the corresponding eigenspace
restricted to $\mathcal{Q}$.

\begin{lemma}\label{lemma:5.1}
Let $M$ be a Hopf hypersurface of $Q^{m*}$ ($m\geq3$) with constant principal curvatures and $\mathfrak{A}$-isotropic unit normal vector field $N$. For any $\lambda\in\sigma(\mathcal{Q})$ and any almost product structure $A\in\mathfrak{A}$,
we have
\begin{enumerate}
\item[(1)]
$\nabla_X (AN)=q(X)A\xi-\lambda AX$ for all $X\in V_\lambda$, where $q$ is the corresponding $1$-form of $A$ described in \eqref{eqn:2.2};

\item[(2)]
$\nabla_X (A\xi)=-q(X)AN-\lambda JAX$ for all $X\in V_\lambda$, where $q$ is the corresponding $1$-form of $A$
described in \eqref{eqn:2.2};

\item[(3)]
For $all\  X,Y\in V_\lambda$, we have $(\lambda^2-1)g(AX,Y)=(\lambda^2-1)g(AX,JY)=0$. Furthermore,
if $\lambda\neq\pm1$, then $AV_\lambda\bot {\rm Span}\{V_\lambda, JV_\lambda\}$, i.e.
$$
g(AX,Y)=g(AX,JY)=0,\ for\ all\  X,Y\in V_\lambda.
$$
\end{enumerate}
\end{lemma}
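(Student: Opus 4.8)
The plan is to prove (1) and (2) by straightforward computation with the ambient connection, and then to feed those two formulas into the Codazzi equation to obtain (3). For (1), I would expand $\bar{\nabla}_X(AN)=(\bar{\nabla}_X A)N+A\bar{\nabla}_X N$. The structure equation \eqref{eqn:2.2} gives $(\bar{\nabla}_X A)N=q(X)JAN$, and the Weingarten formula \eqref{eqn:2.6} gives $\bar{\nabla}_X N=-SX=-\lambda X$ for $X\in V_\lambda$. Since $N$ is $\mathfrak{A}$-isotropic, the relation $A\xi=-AJN=JAN$ (a consequence of $AJ=-JA$, already noted in the proof of Lemma \ref{lemma:2.6}) rewrites $q(X)JAN$ as $q(X)A\xi$, so $\bar{\nabla}_X(AN)=q(X)A\xi-\lambda AX$. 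Because $\mathcal{Q}$ is $A$-invariant, the right-hand side is tangent to $M$, so the Gauss formula forces its normal component to vanish and yields $\nabla_X(AN)=q(X)A\xi-\lambda AX$. Part (2) runs entirely parallel: expand $\bar{\nabla}_X(A\xi)=(\bar{\nabla}_X A)\xi+A\bar{\nabla}_X\xi$, then use \eqref{eqn:2.2}, the identity $JA\xi=-AN$, the relations \eqref{eqn:2.6} and \eqref{eqn:2.7} (which give $\bar{\nabla}_X\xi=\lambda JX$ for $X\in\mathcal{Q}$), and $AJ=-JA$.

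For (3), the key idea is to substitute the \emph{globally defined} fields $AN$ and $A\xi$ into the Codazzi equation \eqref{eqn:2.9}. For $X\in V_\lambda\subset\mathcal{Q}$, nearly every term on the right of \eqref{eqn:2.9} drops out: this uses that $\mathcal{Q}$ is orthogonal to ${\rm Span}\{\xi,AN,A\xi\}$, together with $g(AN,AN)=g(A\xi,A\xi)=1$ and $g(AN,A\xi)=0$ coming from $A^2=\mathrm{Id}$ and self-adjointness of $A$. I expect \eqref{eqn:2.9} to collapse to the two clean identities $(\nabla_X S)(AN)-(\nabla_{AN}S)X=AX$ and $(\nabla_X S)(A\xi)-(\nabla_{A\xi}S)X=JAX$. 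I would then evaluate the first covariant derivatives using $SAN=SA\xi=0$ (Lemma \ref{lemma:2.6}) and parts (1), (2): this gives $(\nabla_X S)(AN)=-S\nabla_X(AN)=\lambda SAX$ and $(\nabla_X S)(A\xi)=-S\nabla_X(A\xi)=\lambda SJAX$.

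Finally, I would pair these two identities with an arbitrary $Y\in V_\lambda$ and project. The point is that the uncontrolled terms $\nabla_{AN}X$ and $\nabla_{A\xi}X$ disappear after projection: writing $(\nabla_{AN}S)X=(\lambda I-S)\nabla_{AN}X$ (valid since $\lambda$ is constant and $X$ is a local section of $V_\lambda$) and noting $Y\in\ker(\lambda I-S)$, one gets $g((\nabla_{AN}S)X,Y)=g(\nabla_{AN}X,(\lambda I-S)Y)=0$, and similarly for $A\xi$. Using self-adjointness of $S$ and $SY=\lambda Y$, the first identity becomes $\lambda^2 g(AX,Y)=g(AX,Y)$ and the second $\lambda^2 g(JAX,Y)=g(JAX,Y)$; since $g(JAX,Y)=-g(AX,JY)$, these are exactly $(\lambda^2-1)g(AX,Y)=0$ and $(\lambda^2-1)g(AX,JY)=0$, and when $\lambda\neq\pm1$ they give $AV_\lambda\perp{\rm Span}\{V_\lambda,JV_\lambda\}$. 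The main obstacle I anticipate is organizational rather than conceptual: verifying that every spurious term of \eqref{eqn:2.9} genuinely vanishes (this rests delicately on the $\mathfrak{A}$-isotropic hypothesis and the orthonormality of $\{N,\xi,AN,A\xi\}$), and recognizing that projecting onto $V_\lambda$ is precisely the device that annihilates the unknown derivatives $\nabla_{AN}X$ and $\nabla_{A\xi}X$. One should also record that constancy of the principal curvatures justifies extending $X$ to a smooth local section of $V_\lambda$, which is what legitimizes differentiating $SX=\lambda X$.
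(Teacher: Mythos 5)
Your proposal is correct and follows essentially the same route as the paper: parts (1) and (2) by expanding $\bar{\nabla}_X(AN)$ and $\bar{\nabla}_X(A\xi)$ via \eqref{eqn:2.2} and the Gauss--Weingarten formulas, and part (3) by inserting $AN$ and $A\xi$ into the Codazzi equation \eqref{eqn:2.9}, evaluating $(\nabla_X S)(AN)$ and $(\nabla_X S)(A\xi)$ through (1), (2) and $SAN=SA\xi=0$, and pairing with $Y\in V_\lambda$ so that the terms $(\nabla_{AN}S)X$ and $(\nabla_{A\xi}S)X$ drop out. Your explicit justification of that last cancellation via $(\nabla_Z S)X=(\lambda I-S)\nabla_Z X$ is exactly the general fact the paper records at the start of Subsection \ref{sect:4.2} and uses implicitly.
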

\begin{proof}
For (1), by \eqref{eqn:2.2}, we have
$$
\begin{aligned}
\nabla_X (AN)&=\bar{\nabla}_X (AN)-g(X,SAN)N=\bar{\nabla}_X (AN)=(\bar{\nabla}_X A)N+A\bar{\nabla}_X N\\
&=q(X)JAN-ASX=q(X)A\xi-\lambda AX, \ \ \forall\ X\in V_\lambda.
\end{aligned}
$$

For (2), by \eqref{eqn:2.2}, we have
$$
\begin{aligned}
\nabla_X (A\xi)&=\bar{\nabla}_X (A\xi)-g(X,SA\xi)N=\bar{\nabla}_X (A\xi)=(\bar{\nabla}_X A)\xi+A\bar{\nabla}_X \xi\\
&=q(X)JA\xi+A\nabla_X \xi=-q(X)AN+A\phi SX=-q(X)AN+\lambda A\phi X\\
&=-q(X)AN-\lambda JAX, \ \ \forall\ X\in V_\lambda.
\end{aligned}
$$

For (3), on the one hand, by Codazzi equation \eqref{eqn:2.9}, we have
$$
g((\nabla_X S)AN-(\nabla_{AN} S)X,Y)=g(AX,Y), \ \ \forall\ X,Y\in V_\lambda.
$$
On the other hand, by above (1), we can have
$$
\begin{aligned}
g((\nabla_X S)AN&-(\nabla_{AN} S)X,Y)=g(-S\nabla_X (AN),Y)=-\lambda g(\nabla_X (AN),Y)\\
&=-\lambda g(q(X)JAN-\lambda AX,Y)=\lambda^2 g(AX,Y), \ \ \forall\ X,Y\in V_\lambda.
\end{aligned}
$$
Combining above two equations, we have $(\lambda^2-1)g(AX,Y)=0$, for all $X,Y\in V_\lambda$.

Similarly, by Codazzi equation \eqref{eqn:2.9}, we have
$$
g((\nabla_X S)A\xi-(\nabla_{A\xi} S)X,Y)=g(JAX,Y)=-g(AX,JY), \ \ \forall\ X,Y\in V_\lambda.
$$
On the other hand, by above (2), we can have
$$
\begin{aligned}
g((\nabla_X S)A\xi&-(\nabla_{A\xi} S)X,Y)=g(-S\nabla_X (A\xi),Y)=-\lambda g(\nabla_X (A\xi),Y)\\
&=-\lambda g(-q(X)AN-\lambda JAX,Y)=-\lambda^2 g(AX,JY), \ \ \forall\ X,Y\in V_\lambda.
\end{aligned}
$$
Combining above two equations, we have $(\lambda^2-1)g(AX,JY)=0$, for all $X,Y\in V_\lambda$.
\end{proof}

\begin{remark}\label{rem:4.1sss}
In a recent paper \cite{LTY}, the authors established a useful lemma concerning Hopf hypersurfaces in 
$Q^m$ with constant principal curvatures and $\mathfrak{A}$-isotropic unit normal vector field. 
The lemma states that the almost product structure $A$ maps each eigenspace 
$V_{\lambda}\subset \mathcal{Q}$ onto the orthogonal complement of $V_{\lambda}\oplus JV_{\lambda}$ 
(see Lemma 4.5 (3) in \cite{LTY}). However, the situation in $Q^{m*}$ is quite different. In particular, for 
$\lambda=\pm1$, the image $AV_\lambda$ and the subspace ${\rm Span}\{V_\lambda, JV_\lambda\}$  
are not necessarily orthogonal. Indeed, as shown in \cite{B-2023}, on each of Examples \ref{ex:3.6}--\ref{ex:3.10}, there exists an almost product structure $A\in\mathfrak{A}$ 
such that $AV_1=V_1$ and $AV_{-1}=V_{-1}$. Due to this distinction, the corresponding Cartan's formulas (see \eqref{eqn:ca2}, \eqref{eqn:call} and \eqref{eqn:call2g} below) in $Q^{m*}$ differ notably from those in 
$Q^m$.
\end{remark}

\begin{lemma}\label{lemma:5.2}
Let $M$ be a Hopf hypersurface of $Q^{m*}$ ($m\geq3$) with constant principal curvatures and $\mathfrak{A}$-isotropic unit normal vector field $N$. For all $\lambda,\mu$ in $\sigma(\mathcal{Q})$, we have
\begin{enumerate}
\item[(1)]
$\nabla_X Y+\lambda g(\phi X,Y)\xi-\lambda g(AX,Y)AN+\lambda g(AX,JY)A\xi\in V_\lambda$ for all $X,Y\in V_\lambda$;

\item[(2)]
$\nabla_X Y\bot V_\lambda$ if $X\in V_\lambda$, $Y\in V_\mu$, $\lambda\neq\mu$.
\end{enumerate}
\end{lemma}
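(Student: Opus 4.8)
The plan is to prove both parts by testing the covariant derivative $\nabla_X Y$ against a basis of $TM = \mathcal{Q} \oplus \mathrm{Span}\{\xi, AN, A\xi\}$, using the metric compatibility of $\nabla$ together with the eigenvector relations established in Lemmas \ref{lemma:2.6} and \ref{lemma:5.1}. Since $\mathcal{Q}$ is $S$-invariant and decomposes orthogonally as $\bigoplus_{\nu \in \sigma(\mathcal{Q})} V_\nu$, while $TM = \mathcal{Q} \oplus \mathrm{Span}\{\xi\} \oplus \mathrm{Span}\{AN, A\xi\}$, it suffices for part (1) to show that the stated vector is orthogonal to $\xi$, $AN$, $A\xi$, and to every $V_\mu$ with $\mu \neq \lambda$; the latter is precisely part (2), so I would prove (2) first and then use it together with three scalar computations to conclude (1).

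For part (2), take $X \in V_\lambda$, $Y \in V_\mu$ with $\lambda \neq \mu$, and an arbitrary $W \in V_\lambda$. The key identity is that for eigenvectors, $g((\nabla_Z S)Y, W) = (\mu - \lambda)g(\nabla_Z Y, W)$, as recorded just before Lemma \ref{lemma:5.1}. I would apply the Codazzi equation \eqref{eqn:2.9} with the pair $(Y, Z)$ for a suitable choice of $Z$, exploiting that $X, Y, W \in \mathcal{Q}$ so that $\eta(X) = \eta(Y) = \eta(W) = 0$ and the $\phi$-terms and the $A N, A\xi$ inner-product terms in \eqref{eqn:2.9} vanish or simplify. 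Computing $g((\nabla_X S)Y - (\nabla_Y S)X, W)$ both via Codazzi and via the eigenvalue relation should force $(\mu - \lambda)g(\nabla_X Y, W) = 0$, whence $g(\nabla_X Y, W) = 0$ for all $W \in V_\lambda$, giving $\nabla_X Y \perp V_\lambda$.

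For part (1), the three scalar quantities to evaluate are $g(\nabla_X Y, \xi)$, $g(\nabla_X Y, AN)$, and $g(\nabla_X Y, A\xi)$. Using $0 = X[g(Y,\xi)] = g(\nabla_X Y, \xi) + g(Y, \nabla_X \xi)$ together with $\nabla_X \xi = \phi S X = \lambda \phi X$ from \eqref{eqn:2.7} gives $g(\nabla_X Y, \xi) = -\lambda g(Y, \phi X) = \lambda g(\phi X, Y)$, which matches the $\xi$-coefficient in the claimed expression. Similarly, differentiating $g(Y, AN) = 0$ and $g(Y, A\xi) = 0$ and substituting the formulas for $\nabla_X(AN)$ and $\nabla_X(A\xi)$ from Lemma \ref{lemma:5.1}(1),(2) produces the coefficients $-\lambda g(AX, Y)$ of $AN$ and $\lambda g(AX, JY)$ of $A\xi$ (up to the bookkeeping that $\phi Y = JY$ on $\mathcal{Q}$ and that the $q(X)$-terms cancel against each other by the orthogonality $g(AN, A\xi) = 0$). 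Subtracting these three projections from $\nabla_X Y$ removes exactly its components along $\xi, AN, A\xi$, while part (2) already kills its components along the other $V_\mu$; the residual lies in $V_\lambda$, which is the assertion.

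The main obstacle I anticipate is the careful bookkeeping in the two $A$-projections for part (1): one must correctly handle the $q(X)$ cross-terms (showing they pair only with the orthogonal partners $A\xi$ and $AN$ and thus do not contaminate the respective coefficients) and must invoke $g(AX,Y)$ and $g(AX,JY)$ as the genuinely surviving inner products, without prematurely setting them to zero — Lemma \ref{lemma:5.1}(3) only annihilates them when $\lambda \neq \pm 1$, whereas Lemma \ref{lemma:5.2} is stated for all $\lambda$, so these terms must be retained in full generality. Keeping track of which terms survive for the borderline eigenvalues $\lambda = \pm 1$, as emphasized in Remark \ref{rem:4.1sss}, is where the argument must be handled with the most care.
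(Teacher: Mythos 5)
Your strategy is the same as the paper's: the paper likewise computes the three projections $g(\nabla_X Y,\xi)$, $g(\nabla_X Y,AN)$, $g(\nabla_X Y,A\xi)$ from metric compatibility, \eqref{eqn:2.7} and Lemma \ref{lemma:5.1}(1),(2), and then runs the Codazzi equation \eqref{eqn:2.9} against eigenvectors in $\mathcal{Q}$ (where all the $\eta$-, $AN$- and $A\xi$-terms drop out) to obtain orthogonality to the other eigenspaces; it merely presents the projections first and reads off assertion (2) from the same Codazzi computation via $g(\nabla_X Z,Y)=-g(\nabla_X Y,Z)$, whereas you prove (2) first. That reordering is harmless, and your Codazzi computation for (2) is exactly the paper's after renaming the vectors.

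There is, however, one concrete error to fix in part (1): you write $g(\nabla_X Y,\xi)=-\lambda g(Y,\phi X)=\lambda g(\phi X,Y)$. The last equality is false: $g(Y,\phi X)=g(\phi X,Y)$ by symmetry of the metric, and the skew-symmetry of $\phi$ only trades $g(\phi X,Y)$ for $-g(X,\phi Y)$; it does not let you swap the two slots of $g$ with a sign. The correct value is $g(\nabla_X Y,\xi)=-\lambda g(\phi X,Y)$, and this is precisely what the lemma needs: since the $\xi$-component of $\nabla_X Y$ equals $-\lambda g(\phi X,Y)$, adding $+\lambda g(\phi X,Y)\xi$ annihilates it. With your stated value the expression in (1) would need a minus sign on the $\xi$-term, contradicting the statement being proved, so the ``matching'' you assert does not follow from your computation as written. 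A second, smaller imprecision: in the $AN$- and $A\xi$-projections the $q(X)$-terms vanish because $Y\in\mathcal{Q}$ is orthogonal to $A\xi$ and $AN$ respectively, not ``by the orthogonality $g(AN,A\xi)=0$''. Your insistence on retaining $g(AX,Y)$ and $g(AX,JY)$ in full generality (since Lemma \ref{lemma:5.1}(3) only kills them for $\lambda\neq\pm1$) is correct and matches the paper.
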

\begin{proof}
By Lemma \ref{lemma:5.1}, for any $X,Y\in V_\lambda$, we have
$$
g(\nabla_X Y,\xi)=-g(Y,\nabla_X \xi)=-g(Y,\phi SX)=-\lambda g(\phi X,Y),
$$
$$
g(\nabla_X Y,AN)=-g(Y,\nabla_X (AN))=-g(Y,q(X)A\xi-\lambda AX)=\lambda g(AX,Y),
$$
$$
g(\nabla_X Y,A\xi)=-g(Y,\nabla_X (A\xi))=-g(Y,-q(X)AN-\lambda JAX)=-\lambda g(AX,JY).
$$
It is easy to see that $\nabla_X Y+\lambda g(\phi X,Y)\xi-\lambda g(AX,Y)AN+\lambda g(AX,JY)A\xi\in\mathcal{Q}$.
Now take any $\mu\in \sigma(\mathcal{Q})$ with $\mu\neq\lambda$ and choose any $Z\in V_\mu$.
By the Codazzi equation,
$$
\begin{aligned}
0=g((\nabla_X S)Z&-(\nabla_Z S)X,Y)=g(\mu\nabla_X Z-S\nabla_X Z,Y)=(\mu-\lambda)g(\nabla_X Z,Y)\\
&=(\lambda-\mu)g(\nabla_X Y,Z), \ \ \forall\ X,Y\in V_\lambda, \ Z\in V_\mu, \ \mu\neq\lambda.
\end{aligned}
$$
Thus, $\nabla_X Y+\lambda g(\phi X,Y)\xi-\lambda g(AX,Y)AN+\lambda g(AX,JY)A\xi\in V_\lambda$. Note that
the second assertion follows from the first since $g(\nabla_X Z,Y)=-g(\nabla_X Y,Z)$.
\end{proof}

%From the first assertion in the lemma above, we immediately get the following
%corollary.

%\begin{corollary}\label{cor:5.1}
%Under the hypothesis of Lemma \ref{lemma:5.2}, $V_\lambda$ is integrable if and only if
%$\lambda=0$ or $\phi V_\lambda\subset V_\lambda^\bot$.
%\end{corollary}

In the following, we give Cartan's formulas (see \eqref{eqn:ca2}, \eqref{eqn:call}
and \eqref{eqn:call2g}) for the Hopf hypersurface of $Q^{m*}$ ($m\geq3$) with constant principal curvatures and $\mathfrak{A}$-isotropic unit normal vector field $N$, which play an important role in the subsequent proofs.

\begin{lemma}\label{lemma:5.3}
Let $M$ be a Hopf hypersurface of $Q^{m*}$ ($m\geq3$) with constant principal curvatures and $\mathfrak{A}$-isotropic unit normal vector field $N$. Let $X\in \mathcal{Q}$ be a unit principal vector at a point $p$ with
associated principal curvature $\lambda$. For any principal orthonormal basis $\{e_i\}_{i=1}^{2m-4}$ of
$\mathcal{Q}$ satisfying $Se_i=\mu_ie_i$, we have
\begin{equation}\label{eqn:ca2}
\begin{aligned}
\sum_{i=1, \mu_i\neq\lambda}^{2m-4}\frac{\lambda\mu_i-1}{\lambda-\mu_i}\Big\{&1+2g(\phi X,e_i)^2-2g(AX,e_i)^2-2g(AX,Je_i)^2\\
&+g(AX,X)g(Ae_i,e_i)+g(AX,JX)g(Ae_i,Je_i)\Big\}=0.
\end{aligned}
\end{equation}
\end{lemma}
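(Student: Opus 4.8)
The plan is to derive the Cartan-type identity \eqref{eqn:ca2} by applying the classical Cartan argument in the present setting, namely computing the sectional-curvature-type quantity $g(R(X,e_i)X,e_i)$ in two different ways and summing over the principal basis. The crucial input is that $M$ is isoparametric with constant principal curvatures (Theorem \ref{thm:4.2w}), so that the connection coefficients $g(\nabla_{e_i} X, e_j)$ are controlled by the structure equations, and that $\mathcal{Q}$ is $S$-invariant with $SAN = SA\xi = 0$. First I would fix the unit principal vector $X \in V_\lambda$ and, for each $e_i$ with $\mu_i \neq \lambda$, use the Codazzi equation \eqref{eqn:2.9} together with Lemma \ref{lemma:5.2} to express the off-diagonal covariant derivatives $g(\nabla_{e_i} X, e_j)$ in terms of the connection data. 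The standard Cartan identity in a space with constant principal curvatures says that for each eigenvalue $\lambda$ and each $X \in V_\lambda$,
\[
\sum_{\mu_i \neq \lambda} \frac{1}{\lambda - \mu_i}\, g\bigl(\bar R(X,e_i)X, e_i\bigr) = 0,
\]
where the weight arises because $(\nabla_{e_i} S)X$ and $(\nabla_X S)e_i$ differ by a curvature term, and constancy of the $\mu_i$ forces a compatibility relation on the mixed derivatives.

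Next I would compute $g(\bar R(X,e_i)X,e_i)$ explicitly from the Gauss equation \eqref{eqn:2.8} (or equivalently from the ambient curvature \eqref{eqn:2.3} restricted to $\mathcal{Q}$, combined with the shape-operator term). Since $X, e_i \in \mathcal{Q}$ and $\mathcal{Q}$ is $A$-invariant and $J$-invariant, the tangential projections $(AX)^\top$, $(JAX)^\top$, etc. simplify: the relevant contributions are the constant term $1$, the $J$-term producing $2g(\phi X, e_i)^2$, and the two $A$-terms producing $-2g(AX,e_i)^2$ and $-2g(AX,Je_i)^2$. The shape-operator contribution $g(Se_i,e_i)g(SX,X) - g(Se_i,X)g(SX,e_i)$ contributes the factor $\lambda\mu_i$ after one recognizes that the Cartan weight $\frac{1}{\lambda-\mu_i}$ combines with it to give $\frac{\lambda\mu_i - 1}{\lambda-\mu_i}$ — this is exactly how the $-1$ and the $\lambda\mu_i$ appear together. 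The two remaining cross terms $g(AX,X)g(Ae_i,e_i)$ and $g(AX,JX)g(Ae_i,Je_i)$ come from the mixed $A$-quadratic-form pieces of \eqref{eqn:2.8}; here Lemma \ref{lemma:5.1}(3) is essential, since for $\mu_i \neq \pm 1$ many of these $A$-inner products vanish, but for $\mu_i = \pm 1$ they survive, which is precisely the feature (noted in Remark \ref{rem:4.1sss}) distinguishing $Q^{m*}$ from $Q^m$.

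The main obstacle I anticipate is the careful bookkeeping of the almost-product terms. Because $AV_\lambda$ need not be orthogonal to $\mathrm{Span}\{V_\lambda, JV_\lambda\}$ when $\lambda = \pm 1$, I cannot discard the quadratic $A$-contributions as one does in the complex quadric case; instead I must retain the full collection $g(AX,e_i)$, $g(AX,Je_i)$, $g(AX,X)$, $g(AX,JX)$, $g(Ae_i,e_i)$, $g(Ae_i,Je_i)$ and verify that \eqref{eqn:2.8} assembles them into exactly the combination displayed in the braces of \eqref{eqn:ca2}. A secondary technical point is justifying termwise that $\sum_i \frac{1}{\lambda-\mu_i}g(\bar R(X,e_i)X,e_i)=0$: this follows because, with constant principal curvatures, differentiating $SX = \lambda X$ along directions in $V_{\mu_i}$ and pairing with the Codazzi identity \eqref{eqn:2.9} kills the terms symmetric in the exchange $X \leftrightarrow e_i$, leaving only the curvature term weighted by $\frac{1}{\lambda-\mu_i}$. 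Once the curvature expression is inserted and the constant factor from $g(SX,X)g(Se_i,e_i)$ is absorbed into the coefficient $\frac{\lambda\mu_i-1}{\lambda-\mu_i}$, the stated formula \eqref{eqn:ca2} follows directly.
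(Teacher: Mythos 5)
There is a genuine gap here: your entire derivation hangs on the ``standard Cartan identity''
$\sum_{\mu_i\neq\lambda}\tfrac{1}{\lambda-\mu_i}\,g(\bar R(X,e_i)X,e_i)=0$,
which is not a known theorem in this setting, is not established by your sketch, and in fact cannot produce formula \eqref{eqn:ca2}. To see this concretely, insert the Gauss equation (as in \eqref{eqn:5.2}, with $Y=e_i$) into your weighted sum: you would obtain
\begin{equation*}
\begin{aligned}
\sum_{\mu_i\neq\lambda}\frac{1}{\lambda-\mu_i}\Bigl[(\lambda\mu_i-1)&-3g(\phi X,e_i)^2+g(AX,e_i)^2+g(AX,Je_i)^2\\
&-g(AX,X)g(Ae_i,e_i)-g(AX,JX)g(Ae_i,Je_i)\Bigr]=0,
\end{aligned}
\end{equation*}
in which each geometric term carries the weight $\tfrac{1}{\lambda-\mu_i}$ and the $\phi$-term has coefficient $3$, the $A$-square terms coefficient $1$. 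In \eqref{eqn:ca2}, by contrast, \emph{every} term in the braces is multiplied by $\tfrac{\lambda\mu_i-1}{\lambda-\mu_i}$, and the coefficients are $2$, $-2$, $-2$. The bracket in \eqref{eqn:ca2} is not a sectional curvature, ambient or intrinsic; the factorization $(\lambda\mu_i-1)\{\cdots\}$ arises only from a cancellation between curvature terms and connection terms that your argument never computes. So even granting your black-box identity, you would land on a different (and incorrect) formula.

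What is actually needed, and what the paper does, consists of three steps absent from your proposal: (i) compute $g(\nabla_XY,\nabla_YX)$ for $X\in V_\lambda$, $Y\in V_\mu$ in two independent ways --- once from the definition of $R$ combined with the Codazzi equation \eqref{eqn:2.9} and Lemmas \ref{lemma:5.1}, \ref{lemma:5.2} (yielding \eqref{eqn:5.1} and \eqref{eqn:5.3}), and once by expanding in the principal basis (yielding \eqref{eqn:5.5}); combining these with the Gauss equation is precisely what makes the factor $(\lambda\mu-1)$ multiply the whole bracket, giving \eqref{eqn:5.6} (Lemma \ref{lemma:5.4}); (ii) the identity \eqref{eqn:5.4}, $(\lambda-\nu)(\mu-\nu)\,g(\nabla_XY,Z)g(\nabla_YX,Z)=g((\nabla_ZS)X,Y)^2$, which converts the remaining connection cross-terms into squares of Codazzi derivatives with controlled signs; and (iii) the concluding step in which one sets $Y=e_j$ in \eqref{eqn:5.6}, divides by $\lambda-\mu_j$, sums over $j$, and observes that the left-hand double sum $\sum_{i,j}\tfrac{g((\nabla_{e_i}S)e_j,X)^2}{(\lambda-\mu_i)(\lambda-\mu_j)(\mu_j-\mu_i)}$ is skew-symmetric in the basis indices $(i,j)$ and hence vanishes, forcing the right-hand side of \eqref{eqn:ca2} to vanish. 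Your appeal to a symmetry ``in the exchange $X\leftrightarrow e_i$'' is not this mechanism; the vanishing comes from skew-symmetry in the pair $(i,j)$ of a double sum of squares, and without steps (i)--(iii) the stated formula cannot be reached.
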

\begin{proof}
Let $\mu\neq\lambda$, $\mu\in\sigma(\mathcal{Q})$ and $Y\in \mathcal{Q}$ be another unit
principal vector at $p$ with corresponding principal
curvature $\mu$. Extend $X$ and $Y$ to be principal vector fields near $p$.

(1). First note that $g(\nabla_X Y,\xi)=-g(Y,\nabla_X \xi)=-g(Y,\phi SX)=-\lambda g(\phi X,Y)$. Similarly,
$$
g(\nabla_Y X,\xi)=-\mu g(\phi Y,X)=\mu g(\phi X,Y).
$$
Thus, $g([X,Y],\xi)=-(\lambda+\mu)g(\phi X,Y)$.

By $g(\nabla_X Y,AN)=-g(Y,\nabla_X (AN))=-g(Y,q(X)A\xi-\lambda AX)=\lambda g(A X,Y)$, and
$g(\nabla_Y X,AN)=\mu g(AX,Y)$, we have $g([X,Y],AN)=(\lambda-\mu)g(AX,Y)$.

By $g(\nabla_X Y,A\xi)=-g(Y,\nabla_X (A\xi))=g(Y,q(X)AN+\lambda JAX)=-\lambda g(AX,JY)$ and
$g(\nabla_Y X,A\xi)=-\mu g(AX,JY)$, we have $g([X,Y],A\xi)=-(\lambda-\mu)g(AX,JY)$.

Next, using Codazzi equation \eqref{eqn:2.9}, we compute
\begin{equation}\label{eqn:5.9}
\begin{aligned}
g((\nabla_{[X,Y]}S)X,Y)=&g((\nabla_X S)[X,Y],Y)-g([X,Y],\xi)g(\phi X,Y)\\
&-g([X,Y],AN)g(AX,Y)-g([X,Y],A\xi)g(JAX,Y)\\
=&g((\nabla_X S)[X,Y],Y)+(\lambda+\mu)g(\phi X,Y)^2\\
&-(\lambda-\mu)g(AX,Y)^2-(\lambda-\mu)g(AX,JY)^2\\
=&g((\nabla_Y S)X,[X,Y])+2g(\phi X,Y)g([X,Y],\xi)\\
&+(\lambda+\mu)g(\phi X,Y)^2-(\lambda-\mu)\{g(AX,Y)^2-g(AX,JY)^2\}\\
=&g((\nabla_Y S)X,[X,Y])-(\lambda+\mu)g(\phi X,Y)^2\\
&-(\lambda-\mu)g(AX,Y)^2-(\lambda-\mu)g(AX,JY)^2.
\end{aligned}
\end{equation}
But now,
$$
g(\nabla_X Y,(\nabla_Y S)X)=g((\lambda I-S)\nabla_Y X,\nabla_X Y),
$$
while
$$
\begin{aligned}
g(\nabla_Y X,(\nabla_Y S)X)&=g(\nabla_Y X,(\nabla_X S)Y)+2g(\phi Y,X)g(\nabla_Y X,\xi)\\
&=g((\nabla_X S)Y,\nabla_Y X)-2g(\phi X,Y)g(\nabla_Y X,\xi)\\
&=g((\mu I-S)\nabla_X Y,\nabla_Y X)-2\mu g(\phi X,Y)^2.
\end{aligned}
$$
Thus
\begin{equation}\label{eqn:5.10}
g([X,Y],(\nabla_Y S)X)=(\lambda-\mu)g(\nabla_X Y,\nabla_Y X)+2\mu g(\phi X,Y)^2.
\end{equation}
On substituting in equation \eqref{eqn:5.9} we obtain
\begin{equation}\label{eqn:5.1}
g((\nabla_{[X,Y]} S)X,Y)=(\lambda-\mu)\Big\{g(\nabla_X Y,\nabla_Y X)
-g(\phi X,Y)^2-g(AX,Y)^2-g(AX,JY)^2\Big\}.
\end{equation}

(2). By using the Gauss equation \eqref{eqn:2.8}, we have
\begin{equation}\label{eqn:5.2}
\begin{aligned}
g(R(X,Y)Y,X)=&-1+\lambda\mu-3g(\phi X,Y)^2+g(AX,Y)^2+g(AX,JY)^2\\
&-g(AX,X)g(AY,Y)-g(AX,JX)g(AY,JY).
\end{aligned}
\end{equation}

(3). Note that $g(\nabla_Y Y,X)=0$ by Lemma \ref{lemma:5.2}, and so
$$
\begin{aligned}
g(\nabla_X {\nabla_Y Y},X)&=-g(\nabla_Y Y,\nabla_X X)=-g(\nabla_Y Y,AN)g(\nabla_X X,AN)-g(\nabla_Y Y,A\xi)g(\nabla_X X,A\xi)\\
&=-\lambda\mu [g(AX,X)g(AY,Y)+g(AX,JX)g(AY,JY)].
\end{aligned}
$$
Similarly, $g(\nabla_X Y,X)=0$, so that
$$
g(\nabla_Y {\nabla_X Y},X)=-g(\nabla_X Y,\nabla_Y X).
$$
So, $g((\nabla_{[X,Y]} S)X,Y)=(\lambda-\mu)g(\nabla_{[X,Y]} X,Y)$. We then compute
$$
\begin{aligned}
g(\nabla_X{\nabla_Y Y}-\nabla_Y {\nabla_X Y}-\nabla_{[X,Y]} Y,X)=&-\lambda\mu [g(AX,X)g(AY,Y)+g(AX,JX)g(AY,JY)]\\
&+g(\nabla_X Y,\nabla_Y X)
+\frac{1}{\lambda-\mu}g((\nabla_{[X,Y]} S)X,Y),
\end{aligned}
$$
which gives
\begin{equation}\label{eqn:5.3}
\begin{aligned}
g(R(X,Y)Y,X)=&-\lambda\mu[g(AX,X)g(AY,Y)+g(AX,JX)g(AY,JY)]\\
&+g(\nabla_X Y,\nabla_Y X)+\frac{1}{\lambda-\mu}g((\nabla_{[X,Y]} S)X,Y).
\end{aligned}
\end{equation}

(4). For a unit principal vector $Z\in \mathcal{Q}$
corresponding to a principal curvature $\nu$ not equal to $\lambda$ or $\mu$. Extend $Z$ to be principal vector fields near $p$.
By using the Codazzi equation \eqref{eqn:2.9}, we compute
$$
g((\nabla_Z S)X,Y)=g((\nabla_X S)Z,Y)=g(Z,(\nabla_X S)Y)=(\mu-\nu)g(Z,\nabla_X Y).
$$
The same calculation with $X$ and $Y$ interchanged gives
$$
g((\nabla_Z S)X,Y)=(\lambda-\nu)g(Z,\nabla_Y X).
$$
Multiplying these two equations together gives
\begin{equation}\label{eqn:5.4}
(\lambda-\nu)(\mu-\nu)g(\nabla_X Y,Z)g(\nabla_Y X,Z)=g((\nabla_Z S)X,Y)^2.
\end{equation}

(5). Note that
$$
g(\nabla_X Y,\xi)g(\nabla_Y X,\xi)=g(\phi SX,Y)g(\phi SY,X)=\lambda\mu g(\phi X,Y)g(\phi Y,X)=-\lambda\mu g(\phi X,Y)^2,
$$
$$
\begin{aligned}
g(\nabla_X Y,AN)&g(\nabla_Y X,AN)=g(Y,\nabla_X (AN))g(X,\nabla_Y (AN))\\
&=g(Y,q(X)A\xi-\lambda AX)g(X,q(Y)A\xi-\mu AY)
=\lambda\mu g(AX,Y)^2,
\end{aligned}
$$
$$
\begin{aligned}
g(\nabla_X Y,A\xi)&g(\nabla_Y X,A\xi)=g(Y,\nabla_X (A\xi))g(X,\nabla_Y (A\xi))\\
&=g(Y,-q(X)AN-\lambda JAX)g(X,-q(Y)AN-\mu JAY)=\lambda\mu g(AX,JY)^2.
\end{aligned}
$$
Thus, in order to express $g(\nabla_X Y,\nabla_Y X)$ in terms of the orthonormal principal basis,
we need only observe that the terms omitted from the full summation of
the $g(\nabla_X Y,e_i)g(\nabla_Y X,e_i)$ (i.e., those $i$ for which $\mu_i=\lambda$ or $\mu_i=\mu$
actually vanish and make no contribution to the sum). This is easily checked using Lemma \ref{lemma:5.2}.
Then, we can have
\begin{equation}\label{eqn:5.5}
g(\nabla_X Y,\nabla_Y X)=\sum_{\mu_i\neq\lambda,\mu}g(\nabla_X Y,e_i)g(\nabla_Y X,e_i)
-\lambda\mu\Big\{g(\phi X,Y)^2-g(AX,Y)^2-g(AX,JY)^2\Big\}.
\end{equation}

(6). Combining equations \eqref{eqn:5.1}, \eqref{eqn:5.2} and \eqref{eqn:5.3}, we get
$$
\begin{aligned}
2g(\nabla_X Y,\nabla_Y X)=&-1+\lambda\mu-2g(\phi X,Y)^2+2g(AX,Y)^2+2g(AX,JY)^2\\
&+(\lambda\mu-1)[g(AX,X)g(AY,Y)+g(AX,JX)g(AY,JY)].
\end{aligned}
$$
Then, we use this and the result of equation \eqref{eqn:5.4} in equation \eqref{eqn:5.5}, we get
\begin{equation}\label{eqn:5.6}
\begin{aligned}
2\sum_{\mu_i\neq\lambda,\mu}\frac{g((\nabla_{e_i} S)X,Y)^2}{(\lambda-\mu_i)(\mu-\mu_i)}
=&(\lambda\mu-1)\Big\{1+2g(\phi X,Y)^2-2g(AX,Y)^2-2g(AX,JY)^2\\
&   +g(AX,X)g(AY,Y)+g(AX,JX)g(AY,JY)\Big\}.
\end{aligned}
\end{equation}

Now for any $j$ with $\mu_j\neq\lambda$, we have (setting $Y=e_j$ in equation \eqref{eqn:5.6}),
\begin{equation}\label{eqn:5.7}
\begin{aligned}
2\sum_{\mu_i\neq\lambda,\mu_j}\frac{g((\nabla_{e_i} S)e_j,X)^2}{(\lambda-\mu_i)(\lambda-\mu_j)(\mu_j-\mu_i)}
=&\frac{\lambda\mu_j-1}{\lambda-\mu_j}\Big\{1+2g(\phi X,e_j)^2-2g(AX,e_j)^2-2g(AX,Je_j)^2\\
&   +g(AX,X)g(Ae_j,e_j)+g(AX,JX)g(Ae_j,Je_j)\Big\}.
\end{aligned}
\end{equation}

Summing this over all $j$ for which $\mu_j\neq\lambda$, we have
\begin{equation}\label{eqn:5.8}
\begin{aligned}
2\sum_{i,j;\mu_i\neq\mu_j;\mu_i,\mu_j\neq\lambda}&\frac{g((\nabla_{e_i} S)e_j,X)^2}{(\lambda-\mu_i)(\lambda-\mu_j)(\mu_j-\mu_i)}\\
&=\sum_{\mu_j\neq\lambda}\frac{\lambda\mu_j-1}{\lambda-\mu_j}\Big\{1+2g(\phi X,e_j)^2-2g(AX,e_j)^2-2g(AX,Je_j)^2\\
&\qquad\qquad\qquad\qquad  +g(AX,X)g(Ae_j,e_j)+g(AX,JX)g(Ae_j,Je_j)\Big\}.
\end{aligned}
\end{equation}

Since the summand on the left side of equation \eqref{eqn:5.8} is skew-symmetric in $\{i,j\}$,
the value of the sum is $0$, and so the sum on the right is $0$.
\end{proof}

%\begin{remark}\label{rem:4.1a}
%The proof of Lemma \ref{lemma:5.3} is inspired by the Cartan's formulas for
%Hopf hypersurfaces of $\mathbb{C}H^m$ with constant principal curvatures, which
%is obtained by Berndt \cite{B}, see also the book of
%Cecil and Ryan \cite{CR}.
%\end{remark}

From equation \eqref{eqn:5.6}, we have the following lemma:

\begin{lemma}\label{lemma:5.4}
Let $M$ be a Hopf hypersurface of $Q^{m*}$ ($m\geq3$) with constant principal curvatures and $\mathfrak{A}$-isotropic unit normal vector field $N$. Let $X,Y\in \mathcal{Q}$ be unit principal vector fields with
associated principal curvatures $\lambda,\mu$ ($\mu\neq\lambda$). For any principal orthonormal basis $\{e_i\}_{i=1}^{2m-4}$ of
$\mathcal{Q}$ satisfying $Se_i=\mu_ie_i$, we have
\begin{equation}\label{eqn:call}
\begin{aligned}
2\sum_{\mu_i\neq\lambda,\mu}\frac{g((\nabla_{e_i} S)X,Y)^2}{(\lambda-\mu_i)(\mu-\mu_i)}
=&(\lambda\mu -1)\Big\{1+2g(\phi X,Y)^2-2g(AX,Y)^2-2g(AX,JY)^2\\
&  +g(AX,X)g(AY,Y)+g(AX,JX)g(AY,JY)\Big\}.
\end{aligned}
\end{equation}
\end{lemma}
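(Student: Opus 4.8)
The plan is to recognize first that the asserted identity \eqref{eqn:call} is exactly the intermediate equation \eqref{eqn:5.6} produced in the course of proving Lemma \ref{lemma:5.3}: the Cartan formula \eqref{eqn:ca2} was obtained from \eqref{eqn:5.6} only after the additional steps of setting $Y=e_j$, summing over $j$, and invoking skew-symmetry, so \eqref{eqn:5.6} is logically prior and holds for an arbitrary pair of unit principal directions $X,Y\in\mathcal{Q}$ with distinct principal curvatures $\lambda,\mu$. Thus the shortest argument is simply to extract \eqref{eqn:5.6}. For a self-contained derivation I would compute the quantity $g(R(X,Y)Y,X)$ in two independent ways and equate them, which is the standard mechanism behind Cartan-type identities.

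First I would evaluate $g(R(X,Y)Y,X)$ directly from the Gauss equation \eqref{eqn:2.8}; since $SX=\lambda X$ and $SY=\mu Y$, this produces the closed expression \eqref{eqn:5.2}, carrying all the $\phi$-, $A$- and $J$-dependent terms. Second I would use the defining formula $R(X,Y)Y=\nabla_X\nabla_Y Y-\nabla_Y\nabla_X Y-\nabla_{[X,Y]}Y$. Here Lemma \ref{lemma:5.2} controls the covariant derivatives $\nabla_Y Y$ and $\nabla_X Y$, so that $g(R(X,Y)Y,X)$ reduces to \eqref{eqn:5.3}, an expression involving $g(\nabla_X Y,\nabla_Y X)$ together with $g((\nabla_{[X,Y]}S)X,Y)$. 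To eliminate the latter I would expand $[X,Y]$ against $\xi$, $AN$ and $A\xi$ using Lemma \ref{lemma:5.1}, feed these brackets into the Codazzi equation \eqref{eqn:2.9}, and arrive at the relation \eqref{eqn:5.1}. Combining \eqref{eqn:5.1}, \eqref{eqn:5.2} and \eqref{eqn:5.3} then solves for $g(\nabla_X Y,\nabla_Y X)$ in closed form.

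The remaining step is to reconcile this closed form with the orthonormal expansion \eqref{eqn:5.5} of $g(\nabla_X Y,\nabla_Y X)$. I would compute the $\xi$-, $AN$- and $A\xi$-components of $\nabla_X Y$ and $\nabla_Y X$ from Lemma \ref{lemma:5.1} (these account for the $g(\phi X,Y)^2$, $g(AX,Y)^2$ and $g(AX,JY)^2$ contributions), note by Lemma \ref{lemma:5.2} that the components along eigenspaces with $\mu_i\in\{\lambda,\mu\}$ vanish, and rewrite each surviving off-eigenspace term $g(\nabla_X Y,e_i)g(\nabla_Y X,e_i)$ via the Codazzi-derived product identity \eqref{eqn:5.4}. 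Equating the two expressions for $g(\nabla_X Y,\nabla_Y X)$ and rearranging yields precisely \eqref{eqn:call}.

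The main obstacle is not the curvature bookkeeping itself but the careful tracking of the $A$- and $J$-coupled cross terms $g(AX,X)g(AY,Y)$ and $g(AX,JX)g(AY,JY)$. These have no analogue in the complex hyperbolic space $\mathbb{C}H^m$, and they arise precisely because, for $\lambda=\pm1$, the image $AV_\lambda$ need not be orthogonal to ${\rm Span}\{V_\lambda,JV_\lambda\}$ (cf. Lemma \ref{lemma:5.1}(3) and Remark \ref{rem:4.1sss}); consequently the $AN$- and $A\xi$-components of $\nabla_X X$ and $\nabla_Y Y$ survive and must be retained throughout, which is where the computation genuinely departs from the classical case.
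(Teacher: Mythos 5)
Your proposal is correct and coincides with the paper's own treatment: the paper proves Lemma \ref{lemma:5.4} by simply extracting equation \eqref{eqn:5.6} from the proof of Lemma \ref{lemma:5.3}, exactly as you observe, and your self-contained derivation (Gauss equation for $g(R(X,Y)Y,X)$, the bracket computation via Lemma \ref{lemma:5.1} and Codazzi giving \eqref{eqn:5.1}, the connection-based expression \eqref{eqn:5.3}, and the orthonormal expansion \eqref{eqn:5.5} with the product identity \eqref{eqn:5.4}) retraces steps (1)--(6) of that proof verbatim. Nothing further is needed.
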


If the number of distinct principal curvatures of $\sigma(\mathcal{Q})$ is $2$, then we have the following result.

\begin{lemma}\label{lemma:4.1aa}
Let $M$ be a Hopf hypersurface of $Q^{m*}$ ($m\geq3$) with constant principal curvatures and $\mathfrak{A}$-isotropic unit normal vector field $N$. Assume that the number of distinct principal curvatures of $\sigma(\mathcal{Q})$ is $2$, i.e.
$\sigma(\mathcal{Q})=\{\lambda,\mu\}$, $\mu\neq\lambda$. Then for any $X\in \mathcal{Q}$ and $Y\in \mathcal{Q}$ being unit principal vector fields with
associated principal curvatures $\lambda$ and $\mu$ respectively, we have
\begin{equation}\label{eqn:call2g}
\begin{aligned}
0=(\lambda\mu -1)\Big\{&1+2g(\phi X,Y)^2-2g(AX,Y)^2-2g(AX,JY)^2\\
&+g(AX,X)g(AY,Y)+g(AX,JX)g(AY,JY)\Big\}.
\end{aligned}
\end{equation}
\end{lemma}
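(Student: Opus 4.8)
The plan is to derive \eqref{eqn:call2g} directly from the Cartan-type identity \eqref{eqn:call} of Lemma \ref{lemma:5.4}, by exploiting the hypothesis that $\sigma(\mathcal{Q})$ consists of exactly two values. Since $X$ and $Y$ are unit principal vector fields in $\mathcal{Q}$ with distinct associated principal curvatures $\lambda\neq\mu$, the identity \eqref{eqn:call} applies verbatim, and its right-hand side is precisely the expression appearing in \eqref{eqn:call2g}. Thus it suffices to show that the left-hand side of \eqref{eqn:call} vanishes.

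The key observation is that the summation on the left of \eqref{eqn:call} runs over those indices $i$ of the principal orthonormal basis $\{e_i\}_{i=1}^{2m-4}$ of $\mathcal{Q}$ for which $\mu_i\neq\lambda$ and $\mu_i\neq\mu$ simultaneously. Under the standing assumption $\sigma(\mathcal{Q})=\{\lambda,\mu\}$, every eigenvalue $\mu_i$ of $S$ restricted to $\mathcal{Q}$ equals either $\lambda$ or $\mu$; consequently the index set $\{i:\mu_i\neq\lambda,\mu\}$ is empty. The sum is therefore vacuous, hence equals $0$, which forces the right-hand side of \eqref{eqn:call}—namely the bracketed expression multiplied by $(\lambda\mu-1)$—to vanish as well. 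This is exactly \eqref{eqn:call2g}.

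Since the argument is a direct specialization of \eqref{eqn:call}, there is essentially no obstacle. The only point deserving a line of care is to confirm that the hypotheses of Lemma \ref{lemma:5.4} are genuinely met—in particular $\mu\neq\lambda$, which is part of the assumption $\sigma(\mathcal{Q})=\{\lambda,\mu\}$ with $\mu\neq\lambda$—so that \eqref{eqn:call} may be invoked, and then to note that the two-curvature hypothesis is exactly what empties the summation index set. No further computation with the terms $g(\phi X,Y)$, $g(AX,Y)$, or $g(AX,JY)$ is needed, as they are carried over unchanged from \eqref{eqn:call}.
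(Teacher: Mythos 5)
Your proof is correct and is essentially the paper's own argument: the paper proves the lemma by re-running the computations of Lemma \ref{lemma:5.3} in the two-eigenvalue case (where the summation in \eqref{eqn:5.5} is vacuous, so $g(\nabla_X Y,\nabla_Y X)=-\lambda\mu\{g(\phi X,Y)^2-g(AX,Y)^2-g(AX,JY)^2\}$, and combining this with the identity obtained from \eqref{eqn:5.1}--\eqref{eqn:5.3}), which is precisely the content of \eqref{eqn:call} with an empty index set. The only presentational difference is that the paper re-derives the two key identities inline rather than citing Lemma \ref{lemma:5.4}; since \eqref{eqn:call} is stated unconditionally and remains valid when the sum $\{i:\mu_i\neq\lambda,\mu\}$ is empty, your direct specialization is a legitimate shortcut.
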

\begin{proof}
According to the assumption that the number of distinct principal curvatures of $\sigma(\mathcal{Q})$ is $2$, then following the
proof of Lemma \ref{lemma:5.3}, we have
$$
g(\nabla_X Y,\nabla_Y X)=-\lambda\mu\Big\{g(\phi X,Y)^2-g(AX,Y)^2-g(AX,JY)^2\Big\},
$$
and
$$
\begin{aligned}
2g(\nabla_X Y,\nabla_Y X)=&-1+\lambda\mu-2g(\phi X,Y)^2+2g(AX,Y)^2+2g(AX,JY)^2\\
&+(\lambda\mu-1)[g(AX,X)g(AY,Y)+g(AX,JX)g(AY,JY)].
\end{aligned}
$$
Combining these two equations, we obtain \eqref{eqn:call2g}.
\end{proof}

Finally, as a direct application of Lemma \ref{lemma:2.3},
we have the following useful lemma.
%In fact, it is also true at each point.

\begin{lemma}\label{lemma:5.5}
Let $M$ be a Hopf hypersurface of $Q^{m*}$ ($m\geq3$) with constant principal curvatures
and $\mathfrak{A}$-isotropic unit normal vector field $N$, and it holds $S\xi=\alpha\xi$ ($\alpha\geq0$) on $M$.
Then for every $\lambda\in\sigma(\mathcal{Q})$ and $X\in V_\lambda$, it holds
\begin{equation}\label{eqn:SS1}
(2\lambda-\alpha)S\phi X=(\alpha\lambda-2)\phi X.
\end{equation}
Moreover, we have
\begin{enumerate}
\item[(1)]
When $\alpha\neq2$ on $M$, then for every $\lambda\in\sigma(\mathcal{Q})$, there is a unique $\mu\in\sigma(\mathcal{Q})$ such that
$\mu=\tfrac{\alpha\lambda-2}{2\lambda-\alpha}$ and $\phi V_{\lambda}=V_{\mu}$.

\item[(2)]
When $\alpha=2$ on $M$, then $1\in\sigma(\mathcal{Q})$. For every $\lambda\in\sigma(\mathcal{Q})$
and $\lambda\neq1$, it holds
$\phi V_{\lambda}\subset V_{1}$.
Let $\{\lambda_i\}_{i=1}^{k}$ be all the distinct principal curvatures in $\sigma(\mathcal{Q})$,
then $V_1\ominus\phi(\oplus_{\lambda_i\neq1}V_{\lambda_i})$,
the orthogonal complement of
$\phi(\oplus_{\lambda_i\neq1}V_{\lambda_i})$ in $V_1$, is $J$-invariant.
%\item[(3)]
%When $0\leq\alpha<2$ on $M$, then for every $\lambda\in\sigma(\mathcal{Q})$, there is a unique %$\mu\in\sigma(\mathcal{Q})$ such that
%$\mu=\tfrac{\alpha\lambda+2}{2\lambda-\alpha}$ and $\phi V_{\lambda}=V_{\mu}$.
\end{enumerate}
\end{lemma}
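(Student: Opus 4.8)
The plan is to read everything off the single scalar identity of Lemma~\ref{lemma:2.3}, specialized to eigenvectors of $S$ inside $\mathcal{Q}$. First I would fix $\lambda\in\sigma(\mathcal{Q})$ and a unit $X\in V_\lambda$, so that $SX=\lambda X$, and recall that $\mathcal{Q}$ is $J$-invariant with $\phi=J$ on $\mathcal{Q}$; in particular $\phi X\in\mathcal{Q}$ and $|\phi X|=|X|$, so $\phi X\neq0$. Substituting $S\phi SX=\lambda S\phi X$ and $(\phi S+S\phi)X=\lambda\phi X+S\phi X$ into \eqref{eqn:2.13} gives, for all $Y\in\mathcal{Q}$,
\[
(2\lambda-\alpha)\,g(S\phi X,Y)=(\alpha\lambda-2)\,g(\phi X,Y).
\]
Since $S\phi X,\phi X\in\mathcal{Q}$ and $Y\in\mathcal{Q}$ is arbitrary, this is exactly the asserted identity $(2\lambda-\alpha)S\phi X=(\alpha\lambda-2)\phi X$.

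For part (1), I would first rule out $2\lambda=\alpha$: if $2\lambda=\alpha$, the identity forces $(\alpha\lambda-2)\phi X=0$, whence $\alpha\lambda=2$, so $\lambda^2=1$ and $\alpha=2\lambda\in\{2,-2\}$, contradicting $\alpha\geq0$, $\alpha\neq2$. Thus $2\lambda-\alpha\neq0$ for every $\lambda\in\sigma(\mathcal{Q})$, and the identity shows $\phi X$ is a principal vector with eigenvalue $\mu=\tfrac{\alpha\lambda-2}{2\lambda-\alpha}$; since $\phi X\in\mathcal{Q}$ this gives $\mu\in\sigma(\mathcal{Q})$ and $\phi V_\lambda\subseteq V_\mu$, with $\mu$ uniquely determined by the formula. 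A short computation shows that applying the same identity to $\mu$ returns the eigenvalue $\lambda$ (the relevant denominator being $\alpha^2-4\neq0$), so $\phi V_\mu\subseteq V_\lambda$ as well. Because $\phi$ is injective on $\mathcal{Q}$ (as $\phi^2=-\mathrm{Id}$ there), comparing dimensions forces $\dim V_\lambda=\dim V_\mu$ and hence $\phi V_\lambda=V_\mu$.

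For part (2), setting $\alpha=2$ turns the identity into $2(\lambda-1)(S\phi X-\phi X)=0$, so for every $\lambda\neq1$ we get $\phi V_\lambda\subseteq V_1$; since $\phi V_\lambda\neq0$ this already proves $1\in\sigma(\mathcal{Q})$. Writing $U=\oplus_{\lambda_i\neq1}V_{\lambda_i}$ and $W=\phi U=JU\subseteq V_1$, the decisive observation is that $J$ interchanges $U$ and $W$: indeed $JU=W$ by definition, while $JW=J^2U=-U=U$ as subspaces, so $W\oplus U$ is $J$-invariant. I would then identify $V_1\ominus W$ with the orthogonal complement of $W\oplus U$ inside $\mathcal{Q}$, using $V_1\perp U$ and the orthogonal decomposition $\mathcal{Q}=V_1\oplus U=(W\oplus U)\oplus(V_1\ominus W)$; since $J$ is an isometry preserving both $\mathcal{Q}$ and $W\oplus U$, it preserves this complement, so $V_1\ominus\phi(\oplus_{\lambda_i\neq1}V_{\lambda_i})$ is $J$-invariant. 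The one genuinely non-formal step, and the place I would be most careful, is this last identification of $V_1\ominus W$ as $(W\oplus U)^{\perp}\cap\mathcal{Q}$, since it is precisely what converts the elementary fact that $W\oplus U$ is $J$-invariant into the stated $J$-invariance of the complement.
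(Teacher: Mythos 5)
Your proof is correct and takes essentially the same route as the paper: deriving \eqref{eqn:SS1} from Lemma \ref{lemma:2.3} applied to eigenvectors in the ($S$- and $J$-invariant) subbundle $\mathcal{Q}$, ruling out $2\lambda=\alpha$ when $\alpha\neq2$, using the involutive relation between $\lambda$ and $\mu=\tfrac{\alpha\lambda-2}{2\lambda-\alpha}$ to upgrade $\phi V_\lambda\subset V_\mu$ to equality, and, when $\alpha=2$, realizing $V_1\ominus\phi\bigl(\oplus_{\lambda_i\neq1}V_{\lambda_i}\bigr)$ as the orthogonal complement inside the $J$-invariant $\mathcal{Q}$ of the $J$-invariant subspace $\bigl(\oplus_{\lambda_i\neq1}V_{\lambda_i}\bigr)\oplus\phi\bigl(\oplus_{\lambda_i\neq1}V_{\lambda_i}\bigr)$. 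The extra details you include (the explicit check that the formula returns $\lambda$ when applied to $\mu$, and the injectivity/dimension count) are simply expansions of steps the paper leaves implicit.
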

\begin{proof}
By $M$ has $\mathfrak{A}$-isotropic unit normal vector field $N$, then $\mathcal{Q}$ is $S$-invariant and $J$-invariant.
Now, taking a unit principal vector field $X\in\mathcal{Q}$ such that $SX=\lambda X$, then by \eqref{eqn:2.13},
we obtain \eqref{eqn:SS1}. If $2\lambda-\alpha=0$, then it holds that
$\lambda=\frac{\alpha}{2}$ and $\alpha\lambda-2=0$, which deduces $\alpha=2$ and $\lambda=1$.

Now, we divide the discussion into two cases, depending on the value of $\alpha$.

(1) When $\alpha\neq2$ on $M$, then we have $S\phi X=\frac{\alpha\lambda-2}{2\lambda-\alpha}\phi X$ for any $X\in V_\lambda$. It means that $\mu=\tfrac{\alpha\lambda-2}{2\lambda-\alpha}$ is also a principal curvature in $\mathcal{Q}$, and it holds $\phi V_{\lambda}\subset V_{\mu}$.
Conversely, for any $X\in V_{\mu}$, then by \eqref{eqn:SS1}, we have $S\phi X=\lambda \phi X$.
So $\phi V_{\lambda}=V_{\mu}$.

(2) When $\alpha=2$ on $M$, for every $\lambda\in\sigma(\mathcal{Q})$
and $\lambda\neq1$ (if it exists), we have $(2\lambda-2)S\phi X=(2\lambda-2)\phi X$,
which means that $1\in\sigma(\mathcal{Q})$ and $\phi V_{\lambda}\subset V_{1}$.
According to $\oplus_{\lambda_i\neq1}(V_{\lambda_i}\oplus \phi V_{\lambda_i})$ and
$\mathcal{Q}=(\oplus_{\lambda_i\neq1}V_{\lambda_i})\oplus V_1$ are $J$-invariant, so
it follows that $V_1\ominus\phi(\oplus_{\lambda_i\neq1}V_{\lambda_i})$ is $J$-invariant.

%(3) When $0\leq\alpha<2$ on $M$, then we have $S\phi X=\frac{\alpha\lambda-2}{2\lambda-\alpha}\phi X$
%for any $X\in V_\lambda$. It means that $\mu=\tfrac{\alpha\lambda-2}{2\lambda-\alpha}$ is also a principal curvature in $\mathcal{Q}$, and it holds $\phi V_{\lambda}=V_{\mu}$.

Thus, we have proved (1) and (2) of Lemma \ref{lemma:5.5}.
\end{proof}

%==================================================
\section{Proofs of Theorems \ref{thm:1.1a}, \ref{thm:1.1b} and \ref{thm:1.1c}}\label{sect:5}
%==================================================
%\subsection{Hopf Hypersurfaces of $Q^{m*}$ with at most four distinct constant principal curvatures}\label{sect:5.1}

%In this section, we study the Hopf hypersurface of $Q^{m*}$ ($m\geq3$) with at most four distinct constant principal curvatures. 
According to Lemma \ref{lemma:2.5}, any Hopf hypersurface in $Q^{m*}$ $(m\ge3)$ with constant
principal curvatures has either $\mathfrak{A}$-principal unit normal vector field or $\mathfrak{A}$-isotropic unit normal vector field. Moreover,
a Hopf hypersurface in $Q^{m*}$ $(m\ge3)$ with $\mathfrak{A}$-principal unit normal vector field is either an open part of Example \ref{ex:3.1} or Example \ref{ex:3.2} or Example \ref{ex:3.3}
(see Theorem \ref{thm:2.7}). Thus, we only need to consider the Hopf hypersurfaces with constant principal curvatures and $\mathfrak{A}$-isotropic unit normal vector field. Since the Reeb function $\alpha$ is  
constant, without loss of generality, up to a sign of the unit normal vector field $N$, we can always assume that $\alpha\geq0$.

%==================================================
\subsection{The proof of Theorem \ref{thm:1.1a}}\label{sect:5.1}~

In this subsection, we assume that $M$ is a Hopf hypersurface of $Q^{m*}$ ($m\geq3$) with at most two distinct constant principal curvatures. There are only the following two cases: 

{\bf Case-1}: $M$ has one distinct constant principal curvatures.

{\bf Case-2}: $M$ has two distinct constant principal curvatures.

In {\bf Case-1}, $M$ is a totally umbilical real hypersurface of $Q^{m*}$ ($m\geq3$), 
then $S=\lambda\,{\rm Id}$, so it holds $S\phi=\phi S$ on $M$, which means that $M$ has isometric Reeb flow. According the Theorem \ref{thm:2.2} and the principal curvatures of Example \ref{ex:3.4} and
Example \ref{ex:3.5}, we know that there does not exist totally umbilical real hypersurface of $Q^{m*}$ ($m\geq3$). Thus,  {\bf Case-1} does not occur. 

In {\bf Case-2}, if $M$ has $\mathfrak{A}$-principal unit normal vector field $N$, then by Theorem \ref{thm:2.7}, $M$ is either an open part of Example \ref{ex:3.1} or Example \ref{ex:3.2} or Example \ref{ex:3.3}. According Examples \ref{ex:3.1}--\ref{ex:3.3} and the assumption
that $M$ has two distinct constant principal curvatures, we
know that $M$ is an open part of Example \ref{ex:3.1}.

In {\bf Case-2}, if $M$ has $\mathfrak{A}$-isotropic unit normal vector field $N$, then by Lemma \ref{lemma:2.6}, we have $S\xi=\alpha\xi$, and $SAN=SA\xi=0$. It follows that $\alpha$ and $0$ are two principal curvatures. In the following, we will divide the discussions into two subcases depending on value of the constant $\alpha$.

{\bf Case-2-i}: $\alpha>0$.

{\bf Case-2-ii}: $\alpha=0$.

In {\bf Case-2-i}, we know that $\alpha$ and $0$ are exactly those two distinct principal curvatures on $M$.
Then, at least one of them belongs in $\sigma(\mathcal{Q})$.

If $\alpha\in\sigma(\mathcal{Q})$, then there exists a tangent vector field $X\in\mathcal{Q}$, such that
$SX=\alpha X$. Now, by Lemma \ref{lemma:5.5}, it follows that $S\phi X=\tfrac{\alpha^2-2}{\alpha}\phi X$, in which $\tfrac{\alpha^2-2}{\alpha}$ is obviously different from $\alpha$. By the assumption
that $M$ has two distinct constant principal curvatures, we have $\tfrac{\alpha^2-2}{\alpha}=0$,
which deduces that $\alpha=\sqrt{2}$ and $0\in\sigma(\mathcal{Q})$. Now, take a tangent vector field $X\in\mathcal{Q}$, such that $SX=0$, by Lemma \ref{lemma:5.5}, it follows that $S\phi X=\sqrt{2}\phi X$.
It follows that $\mathcal{Q}=V_{\sqrt{2}}\oplus V_0$ and
$J V_{\sqrt{2}}=V_0$. Now, by (3) of Lemma \ref{lemma:5.1}, we have
$AV_{\sqrt{2}}\bot{\rm Span}\{V_{\sqrt{2}},JV_{\sqrt{2}}\}$, which contradicts with $AV_{\sqrt{2}}\subset\mathcal{Q}$.

If $0\in\sigma(\mathcal{Q})$, then there exists a tangent vector field $X\in\mathcal{Q}$ such that
$SX=0$. Now, by Lemma \ref{lemma:5.5}, it follows that $S\phi X=\tfrac{2}{\alpha}\phi X$, in which $\tfrac{2}{\alpha}$ is different from $0$. By the assumption that $M$ has two distinct constant principal curvatures, then we have $\tfrac{2}{\alpha}=\alpha$,
which deduces $\alpha=\sqrt{2}$ and $\sqrt{2}\in\sigma(\mathcal{Q})$. Now, taking a tangent vector field $X\in\mathcal{Q}$ such that $SX=\sqrt{2} X$, by Lemma \ref{lemma:5.5}, it follows that $S\phi X=0$.
This implies that $\mathcal{Q}=V_0\oplus V_{\sqrt{2}}$ and $JV_0=V_{\sqrt{2}}$.
But from (3) of Lemma \ref{lemma:5.1}, we have $AV_0\perp {\rm Span}\{V_0,V_{\sqrt{2}}\}$,
which contradicts with $AV_0\subset \mathcal{Q}$.

In {\bf Case-2-ii}, we know that there is another one principal curvature $\lambda\neq0$. If $0\in\sigma(\mathcal{Q})$, then there exists a tangent vector field $X\in\mathcal{Q}$ such that
$SX=0$. Now, by \eqref{eqn:SS1}, it follows that $\phi X=0$,
which is a contradiction. Thus, it follows that $0\notin\sigma(\mathcal{Q})$.
From the assumption that $M$ has two distinct constant principal curvatures,
it holds that
$SX=\lambda X$ for all $X\in\mathcal{Q}$, where $\lambda\neq0$. So $M$ satisfies $S\phi=\phi S$, it follows that $M$ has isometric Reeb flow. According the Theorem \ref{thm:2.2} and the principal curvatures of Example \ref{ex:3.4} and Example \ref{ex:3.5}, we get a contradiction.

We have completed the proof of Theorem \ref{thm:1.1a}.

%==================================================
\subsection{The proof of  Theorem \ref{thm:1.1b}}\label{sect:5.2}~

In this subsection, we assume that $M$ is a Hopf hypersurface of $Q^{m*}$ ($m\geq3$) with three distinct constant principal curvatures. 
If $M$ has $\mathfrak{A}$-principal unit normal vector field $N$, then by Theorem \ref{thm:2.7}, $M$ is either an open part of Example \ref{ex:3.1} or Example \ref{ex:3.2} or Example \ref{ex:3.3}. According Examples \ref{ex:3.1}--\ref{ex:3.3} and the assumption
that $M$ has three distinct constant principal curvatures, we
know that $M$ is either an open part of Example \ref{ex:3.2} or Example \ref{ex:3.3}.

If $M$ has $\mathfrak{A}$-isotropic unit normal vector field $N$, then by Lemma \ref{lemma:2.6}, we have $S\xi=\alpha\xi$, and $SAN=SA\xi=0$. In the following, we will divide the discussions into two cases depending on value of the constant $\alpha$.

{\bf Case-i}: $\alpha>0$.

{\bf Case-ii}: $\alpha=0$.

In {\bf Case-i}, we know that $\alpha$ and $0$ are two distinct principal curvatures on $M$.
In the following, we further divide the discussions into four subcases depending on whether $\alpha$ or $0$
belongs in $\sigma(\mathcal{Q})$.

{\bf Case-i-1}: $\alpha\in\sigma(\mathcal{Q})$ and $0\notin\sigma(\mathcal{Q})$.

{\bf Case-i-2}: $\alpha\notin\sigma(\mathcal{Q})$ and $0\in\sigma(\mathcal{Q})$.

{\bf Case-i-3}: $\alpha,0\in\sigma(\mathcal{Q})$.

{\bf Case-i-4}: $\alpha,0\notin\sigma(\mathcal{Q})$.

In {\bf Case-i-1}, there exists a tangent vector field $X\in\mathcal{Q}$, such that
$SX=\alpha X$. Now, by Lemma \ref{lemma:5.5}, it follows that $S\phi X=\tfrac{\alpha^2-2}{\alpha}\phi X$,
in which $\tfrac{\alpha^2-2}{\alpha}\in\sigma(\mathcal{Q})$ is different from $\alpha$.

If $\alpha\notin \{1,\sqrt{2},2\}$, then $\tfrac{\alpha^2-2}{\alpha}\neq0$, $\tfrac{\alpha^2-2}{\alpha}\neq\pm1$. By $0\notin\sigma(\mathcal{Q})$, we have $\mathcal{Q}=V_\alpha\oplus V_{\tfrac{\alpha^2-2}{\alpha}}$. From \eqref{eqn:SS1}, it can be checked
that $JV_\alpha=V_{\tfrac{\alpha^2-2}{\alpha}}$.
Now, by (3) of Lemma \ref{lemma:5.1}, so
$AV_\alpha\bot{\rm Span}\{V_\alpha,JV_\alpha\}$, which contradicts with $AV_\alpha\subset\mathcal{Q}$.

If $\alpha=1$, then $\tfrac{\alpha^2-2}{\alpha}=-1$. By $0\notin\sigma(\mathcal{Q})$ and Lemma \ref{lemma:5.5}, we can have $\mathcal{Q}=V_1\oplus V_{-1}$ and $JV_1=V_{-1}$.
It follows that $M$ has constant principal curvatures $0,1,-1$ with multiplicities $2, m-1,m-2$.
In particular, according to the principal curvatures of
Example \ref{ex:3.7}, we know that Example \ref{ex:3.7} is contained in this case.

If $\alpha=\sqrt{2}$, then $\tfrac{\alpha^2-2}{\alpha}=0$, which contradicts with $0\notin\sigma(\mathcal{Q})$.

If $\alpha=2$, then $\tfrac{\alpha^2-2}{\alpha}=1$. By $0\notin\sigma(\mathcal{Q})$ and Lemma \ref{lemma:5.5}, we have $\mathcal{Q}=V_2\oplus V_1$ and $JV_2\subset V_1$. By (3) of Lemma \ref{lemma:5.1}, we have
$AV_2\bot{\rm Span}\{V_2,JV_2\}$. Now, we take $X\in V_2$ and $Y=JX\in V_1$ into Cartan's formula
\eqref{eqn:call2g}, we have
$$
0=(2-1)[1+2g(\phi X,JX)^2-3g(AX,X)^2-3g(AX,JX)^2]=3,
$$
which is a contradiction.

In {\bf Case-i-2}, there exists a tangent vector field $X\in\mathcal{Q}$, such that
$SX=0$. Now, by Lemma \ref{lemma:5.5}, it follows that $S\phi X=\tfrac{2}{\alpha}\phi X$, in which $\tfrac{2}{\alpha}\in\sigma(\mathcal{Q})$ is different from $0$.

If $\alpha\notin\{\sqrt{2},2\}$, then $\tfrac{2}{\alpha}\neq\alpha$. By $\alpha\notin\sigma(\mathcal{Q})$ and Lemma \ref{lemma:5.5}, we have $\mathcal{Q}=V_0\oplus V_{\tfrac{2}{\alpha}}$ and
$JV_0=V_{\tfrac{2}{\alpha}}$. Now, by (3) of Lemma \ref{lemma:5.1}, we have
$AV_0\bot{\rm Span}\{V_0,V_{\tfrac{2}{\alpha}}\}$, which contradicts with $AV_0\subset\mathcal{Q}$.

If $\alpha=\sqrt{2}$, then $\tfrac{2}{\alpha}=\sqrt{2}$, which contradicts with $\alpha\notin\sigma(\mathcal{Q})$.

If $\alpha=2$, then $\tfrac{2}{\alpha}=1$. By $\alpha\notin\sigma(\mathcal{Q})$ and Lemma \ref{lemma:5.5}, we have $\mathcal{Q}=V_0\oplus V_1$ and $JV_0\subset V_1$. By (3) of Lemma \ref{lemma:5.1}, we have
$AV_0\bot{\rm Span}\{V_0,JV_0\}$. Now, we take unit vector fields $X\in V_0$ and $Y=JX\in V_1$ into Cartan's formula \eqref{eqn:call2g}, we have
$$
0=(-1)[1+2g(\phi X,JX)^2-3g(AX,JX)^2-3g(AX,X)^2]=-3,
$$
which is a contradiction.

In {\bf Case-i-3}, there exist tangent vector fields $X,Y\in\mathcal{Q}$, such that
$SX=\alpha X$ and $SY=0$. Now, by Lemma \ref{lemma:5.5}, it follows that $S\phi X=\tfrac{\alpha^2-2}{\alpha}\phi X$ and $S\phi Y=\frac{2}{\alpha}\phi Y$,
which implies that $\tfrac{\alpha^2-2}{\alpha},\frac{2}{\alpha}\in \sigma(\mathcal{Q})$.

%By the assumption that
%$M$ has three distinct constant curvature principals, then we have $\alpha=\frac{2}{\alpha}$, which
%deduces that $\alpha=\sqrt{2}$, or we have $\tfrac{\alpha^2-2}{\alpha}=\alpha=\frac{2}{\alpha}$,
%which deduces that $\alpha=2$.

If $\alpha\notin\{\sqrt{2},2\}$, then $\{\alpha,0,\frac{2}{\alpha},\tfrac{\alpha^2-2}{\alpha}\}$
are four distinct principal curvatures, which is a contradiction.

If $\alpha=\sqrt{2}$, then $\{\alpha,0,\frac{2}{\alpha},\tfrac{\alpha^2-2}{\alpha}\}=\{\sqrt{2},0\}$. So, there is another one constant principal curvature
$\lambda\notin \{\sqrt{2},0\}$ such that $\lambda\in \sigma(\mathcal{Q})$.
Choosing a unit vector field $X\in \mathcal{Q}$ such that $SX=\lambda X$, by Lemma \ref{lemma:5.5}, we have
$S\phi X=\frac{\sqrt{2}\lambda-2}{2\lambda-\sqrt{2}}\phi X$, which implies $\frac{\sqrt{2}\lambda-2}{2\lambda-\sqrt{2}}\in \sigma(\mathcal{Q})$. It also follows
from $\lambda\notin \{\sqrt{2},0\}$ that $\lambda\neq\frac{\sqrt{2}\lambda-2}{2\lambda-\sqrt{2}}$ and $\frac{\sqrt{2}\lambda-2}{2\lambda-\sqrt{2}}\notin\{\sqrt{2},0\}$. Thus, there are four distinct constant principal curvatures $\{\sqrt{2},0,\lambda,\frac{\sqrt{2}\lambda-2}{2\lambda-\sqrt{2}}\}\in \sigma(\mathcal{Q})$, which is a contradiction.

If $\alpha=2$, then $\{\alpha,0,\frac{2}{\alpha},\tfrac{\alpha^2-2}{\alpha}\}=\{2,0,1\}$.
By Lemma \ref{lemma:5.5}, we can have $\mathcal{Q}=V_2\oplus V_0\oplus V_1$, $JV_2\subset V_1$
and $JV_0\subset V_1$.
By (3) of Lemma \ref{lemma:5.1}, on $\mathcal{Q}$, it holds
$AV_2\bot {\rm Span}\{V_2, JV_2\}$ and $AV_0\bot {\rm Span}\{V_0, JV_0\}$.
Now, we take unit vector fields $X\in V_0$ and $Y=JX\in V_1$ into \eqref{eqn:call}, we have
\begin{equation*}
0\leq2\sum_{\mu_i=2}\frac{g((\nabla_{e_i} S)X,JX)^2}{(0-2)(1-2)}
=(-1+0\times1)\Big\{1+2g(\phi X,JX)^2-3g(AX,JX)^2-3g(AX,X)^2\Big\}=-3,
\end{equation*}
which is a contradiction.

In {\bf Case-i-4}, we know that there is another one constant principal curvature $\lambda\notin\{\alpha,0\}$ which belongs in $\sigma(\mathcal{Q})$. By
the assumption that $M$ has three distinct constant principal curvatures and
$\alpha,0\notin\sigma(\mathcal{Q})$, we have $SX=\lambda X$ for all $X\in\mathcal{Q}$. It follows  that $M$ satisfies $S\phi=\phi S$, and $M$ has isometric Reeb flow. According the Theorem \ref{thm:2.2} and the principal curvatures of Example \ref{ex:3.4} and Example \ref{ex:3.5}, we know
that $M$ is an open part of Example \ref{ex:3.4}.

In {\bf Case-ii}, it holds $\alpha=0$. If $0\in\sigma(\mathcal{Q})$, then there exists a unit tangent vector field $X\in\mathcal{Q}$ such that $SX=0$. Now, by Lemma \ref{lemma:5.5}, it follows that $\phi X=0$, which is a contradiction. So, $0\notin\sigma(\mathcal{Q})$.
Then, there are another two distinct constant principal curvatures $\{\lambda,\mu\}=\sigma(\mathcal{Q})$, which holds $\lambda,\mu\neq0$.
By Lemma \ref{lemma:5.5}, we have
$\mu=\tfrac{-1}{\lambda}$, and it holds $\mathcal{Q}=V_\lambda\oplus V_\mu$ and $JV_\lambda=V_\mu$.

If $\lambda\notin\{1,-1\}$, then by (3) of Lemma \ref{lemma:5.1}, we have
$AV_\lambda\bot{\rm Span}\{V_\lambda,JV_\lambda\}$, which contradicts with $AV_\lambda\subset\mathcal{Q}$. Thus, without loss of generality, it must hold $\lambda=-1$ and $\mu=1$.
Now, we have $\alpha=0$, $\mathcal{Q}=V_1\oplus V_{-1}$, $JV_1=V_{-1}$ on $M$. It follows that $M$ has constant principal curvatures $0,1,-1$ with multiplicities $3, m-2,m-2$.
In particular, according to the principal curvatures of
Example \ref{ex:3.6}, we know that Example \ref{ex:3.6} is contained in this case.

We have completed the proof of Theorem \ref{thm:1.1b}.

%==================================================
\subsection{The proof of Theorem \ref{thm:1.1c}}\label{sect:5.3}~

In this subsection, we assume that $M$ is a Hopf hypersurface of $Q^{m*}$ ($m\geq3$) with four distinct constant principal curvatures. 
If $M$ has $\mathfrak{A}$-principal unit normal vector field $N$, then by Theorem \ref{thm:2.7}, $M$ is either an open part of Example \ref{ex:3.1} or Example \ref{ex:3.2} or Example \ref{ex:3.3}. According Examples \ref{ex:3.1}--\ref{ex:3.3} and the assumption
that $M$ has four distinct constant principal curvatures, we get a contradiction.

If $M$ has $\mathfrak{A}$-isotropic unit normal vector field $N$, then by Lemma \ref{lemma:2.6}, we have $S\xi=\alpha\xi$, and $SAN=SA\xi=0$. In the following,
we still divide the discussions into two cases depending on the value of $\alpha$.

{\bf Case-i}: $\alpha>0$.

{\bf Case-ii}: $\alpha=0$.

In {\bf Case-i}, $\alpha$ and $0$ are two distinct principal curvatures on $M$.
Then we further divide the discussions into four subcases depending on whether $\alpha$ or $0$
belongs in $\sigma(\mathcal{Q})$.

{\bf Case-i-1}: $\alpha\in\sigma(\mathcal{Q})$ and $0\notin\sigma(\mathcal{Q})$.

{\bf Case-i-2}: $\alpha\notin\sigma(\mathcal{Q})$ and $0\in\sigma(\mathcal{Q})$.

{\bf Case-i-3}: $\alpha,0\in\sigma(\mathcal{Q})$.

{\bf Case-i-4}: $\alpha,0\notin\sigma(\mathcal{Q})$.

In {\bf Case-i-1}, there exists a tangent vector field $X\in\mathcal{Q}$, such that
$SX=\alpha X$. Now, by Lemma \ref{lemma:5.5}, it follows that $S\phi X=\tfrac{\alpha^2-2}{\alpha}\phi X$,
in which $\tfrac{\alpha^2-2}{\alpha}\in\sigma(\mathcal{Q})$ is different from $\alpha$.

If $\alpha\notin\{\sqrt{2},2\}$, then there is another one constant principal curvature $\lambda\in\sigma(\mathcal{Q})$ and
$\lambda\notin\{\alpha,0,\tfrac{\alpha^2-2}{\alpha}\}$. For any $X\in V_\lambda$, by Lemma \ref{lemma:5.5}, we have $(2\lambda-\alpha)S\phi X=(\alpha\lambda-2)\phi X$. It follows from $\alpha\neq2$, $0\notin\sigma(\mathcal{Q})$ and $\lambda\notin\{\alpha,0,\tfrac{\alpha^2-2}{\alpha}\}$ that $\frac{\alpha\lambda-2}{2\lambda-\alpha}\in\sigma(\mathcal{Q})$ and
$\frac{\alpha\lambda-2}{2\lambda-\alpha}$
is different from $\{\alpha,0,\tfrac{\alpha^2-2}{\alpha}\}$. According to the assumption that
$M$ has four distinct principal curvatures, then we have $\lambda=\frac{\alpha\lambda-2}{2\lambda-\alpha}$, which deduces that
$\alpha>2$ and $\lambda=\frac{\alpha\pm\sqrt{\alpha^2-4}}{2}$. Now, we have
$$
\begin{aligned}
\mathcal{Q}=V_\alpha\oplus V_{\tfrac{\alpha^2-2}{\alpha}}\oplus V_{\lambda}, \ \ \phi V_\alpha= V_{\tfrac{\alpha^2-2}{\alpha}},\ \ \phi V_\lambda=V_\lambda.
\end{aligned}
$$
Moreover, by (3) of Lemma \ref{lemma:5.1} and $\alpha>2$, it holds $AV_{\alpha}\perp \{V_{\alpha},JV_{\alpha}\}$ and $AV_{\alpha}\subset V_\lambda$.

If $\lambda=\frac{\alpha-\sqrt{\alpha^2-4}}{2}$,
by $\alpha>2$, we have $0<\frac{\alpha-\sqrt{\alpha^2-4}}{2}<\tfrac{\alpha^2-2}{\alpha}<\frac{\alpha+\sqrt{\alpha^2-4}}{2}
<\alpha$, then we have $\lambda_+=\alpha$, and $M_+$ is a focal submanifold of $M$. By Proposition \ref{prop:4.3w}, we have that the distinct constant principal curvatures of $M_+$ are
$$
\frac{\alpha^2-2-\alpha\sqrt{\alpha^2-4}}{\alpha+\sqrt{\alpha^2-4}}, \ \
\frac{\alpha(\alpha^2-3)}{2}, \ \ \alpha(\alpha^2-3),\ \ 0.
$$
By $\alpha>2$, we have
$$
0<\frac{\alpha^2-2-\alpha\sqrt{\alpha^2-4}}{\alpha+\sqrt{\alpha^2-4}}
<\frac{\alpha(\alpha^2-3)}{2}<\alpha(\alpha^2-3).
$$
Thus, it contradicts with the fact that $M_+$ is austere.

If $\lambda=\frac{\alpha+\sqrt{\alpha^2-4}}{2}$,
we take $X\in V_\alpha$, $Y=AX\in V_{\frac{\alpha+\sqrt{\alpha^2-4}}{2}}$ into \eqref{eqn:call}, we have
\begin{equation}\label{eqn:5.2sq}
2\sum_{\mu_i=\tfrac{\alpha^2-2}{\alpha}}\frac{g((\nabla_{e_i} S)X,AX)^2}{(\alpha-\tfrac{\alpha^2-2}{\alpha})(\frac{\alpha+\sqrt{\alpha^2-4}}{2}-\tfrac{\alpha^2-2}{\alpha})}
=(-1+\alpha\frac{\alpha+\sqrt{\alpha^2-4}}{2})(1-2).
\end{equation}
By $\alpha>2$, we have $0<\frac{\alpha-\sqrt{\alpha^2-4}}{2}<\tfrac{\alpha^2-2}{\alpha}<\frac{\alpha+\sqrt{\alpha^2-4}}{2}
<\alpha$ and $\frac{\alpha(\alpha+\sqrt{\alpha^2-4})}{2}>1$, it implies that the left side of \eqref{eqn:5.2sq} is non-negative, the right side of \eqref{eqn:5.2sq} is negative, which is a contradiction.

If $\alpha=\sqrt{2}$, then $\tfrac{\alpha^2-2}{\alpha}=0$, which contradicts with $0\notin\sigma(\mathcal{Q})$.

If $\alpha=2$, then $\tfrac{\alpha^2-2}{\alpha}=1$. By the assumption that $M$ has four distinct principal curvatures, we have another one constant principal curvature $\lambda\in\sigma(\mathcal{Q})$ and $\lambda\notin\{2,0,1\}$. For any $X\in V_\lambda$, by Lemma \ref{lemma:5.5}, we have $S\phi X=\phi X$.
Now, we have
$$
\begin{aligned}
\mathcal{Q}=V_2\oplus V_\lambda\oplus V_1, \ \ \phi V_2\subset V_1,\ \ \phi V_\lambda\subset V_1.
\end{aligned}
$$
If $\lambda<1$ and $\lambda\neq-1$, then by (3) of Lemma \ref{lemma:5.1}, it holds
$AV_\lambda\perp \{V_\lambda,JV_\lambda\}$. Now, we take unit vector fields $X\in V_\lambda$ and $Y=JX\in V_1$ into \eqref{eqn:call}, we have
$$
0\leq2\sum_{\mu_i=2}\frac{g((\nabla_{e_i} S)X,JX)^2}{(1-2)(\lambda-2)}
=3(-1+\lambda)<0,
$$
which is a contradiction.
If $\lambda=-1$, then $\lambda_+=2$, and $M_+$ is a focal submanifold of $M$. By Proposition \ref{prop:4.3w}, we have that the distinct constant principal curvatures of $M_+$ are $2,0,1,-1$, which deduces that
$M_+$ is not austere. It is a contradiction.
If $1<\lambda<2$, then by (3) of Lemma \ref{lemma:5.1}, it holds
$AV_2\perp \{V_2,JV_2\}$. Now, we take unit vector fields $X\in V_2$ and $Y=JX\in V_1$ into \eqref{eqn:call}, we have
$$
0\geq2\sum_{\mu_i=\lambda}\frac{g((\nabla_{e_i} S)X,JX)^2}{(2-\lambda)(1-\lambda)}
=3>0,
$$
which is a contradiction. If $\lambda>2$, then by (3) of Lemma \ref{lemma:5.1}, it still holds
$AV_\lambda\perp \{V_\lambda,JV_\lambda\}$. Now, we take unit vector fields $X\in V_\lambda$ and $Y=JX\in V_1$ into \eqref{eqn:call}, we have
$$
0\geq2\sum_{\mu_i=2}\frac{g((\nabla_{e_i} S)X,JX)^2}{(\lambda-2)(1-2)}
=3(\lambda-1)>0,
$$
which is a contradiction.

In {\bf Case-i-2}, there exists a tangent vector field $X\in\mathcal{Q}$, such that
$SX=0$. Now, by Lemma \ref{lemma:5.5}, it follows that $S\phi X=\tfrac{2}{\alpha}\phi X$, in which $\tfrac{2}{\alpha}\in\sigma(\mathcal{Q})$ is different from $0$.

If $\alpha\notin\{\sqrt{2},2\}$, then there is $\lambda\in\sigma(\mathcal{Q})$ and
$\lambda\notin\{\alpha,0,\tfrac{2}{\alpha}\}$. For any $X\in V_\lambda$, by Lemma \ref{lemma:5.5}, we have $(2\lambda-\alpha)S\phi X=(\alpha\lambda-2)\phi X$. It follows from $\alpha\neq2$,
$\alpha\notin\sigma(\mathcal{Q})$ and $\lambda\notin\{\alpha,0,\tfrac{2}{\alpha}\}$ that $\frac{\alpha\lambda-2}{2\lambda-\alpha}\in\sigma(\mathcal{Q})$ and $\frac{\alpha\lambda-2}{2\lambda-\alpha}$
is different from $\{\alpha,0,\tfrac{2}{\alpha}\}$. According to the assumption that
$M$ has four distinct principal curvatures, then we have $\lambda=\frac{\alpha\lambda-2}{2\lambda-\alpha}$, which deduces that
$\alpha>2$ and $\lambda=\frac{\alpha\pm\sqrt{\alpha^2-4}}{2}$. Note that
by $\alpha>2$, we have $\frac{\alpha\pm\sqrt{\alpha^2-4}}{2}\neq1$.
Now, we have
$$
\begin{aligned}
\mathcal{Q}=V_0\oplus V_{\tfrac{2}{\alpha}}\oplus V_{\lambda}, \ \ \phi V_0= V_{\tfrac{2}{\alpha}},\ \ \phi V_\lambda=V_\lambda.
\end{aligned}
$$
Moreover, by (3) of Lemma \ref{lemma:5.1}, it holds $AV_{0}\perp \{V_{0},JV_{0}\}$
and $A(V_{0}\oplus V_{\tfrac{2}{\alpha}})=V_\lambda$.

If $\lambda=\frac{\alpha-\sqrt{\alpha^2-4}}{2}$,
we have $0<\frac{\alpha-\sqrt{\alpha^2-4}}{2}<\frac{2}{\alpha}$.
Assume that ${\rm dim}V_0\geq2$, then one can choose some unit vector fields $X\in V_0$ and $Y\in V_{\frac{\alpha-\sqrt{\alpha^2-4}}{2}}$ such that $AX\perp \{Y,JY\}$. Now, we take such $(X,Y)$
into \eqref{eqn:call}, we have
\begin{equation*}
\begin{aligned}
0\leq2\sum_{\mu_i=\frac{2}{\alpha}}\frac{g((\nabla_{e_i} S)X,Y)^2}{(0-\frac{2}{\alpha})(\frac{\alpha-\sqrt{\alpha^2-4}}{2}-\frac{2}{\alpha})}
=&(-1)\Big\{1+2g(\phi X,Y)^2-2g(AX,Y)^2-2g(AX,JY)^2\\
&  +g(AX,X)g(AY,Y)+g(AX,JX)g(AY,JY)\Big\}=-1,
\end{aligned}
\end{equation*}
which is a contradiction. So ${\rm dim}V_0=1$, then we can choose a frame $\{e_1,e_2=Je_1,e_3=Ae_1,e_4=Je_3,e_5=AN,e_6=A\xi,e_7=\xi\}$ such that $e_1\in V_0,e_2\in V_{\frac{2}{\alpha}},\{e_3,e_4\}\in V_{\frac{\alpha-\sqrt{\alpha^2-4}}{2}}$.
%By (1) of Lemma \ref{lemma:5.2}, we have $\nabla_{e_1}{e_1}$
On one hand, by (1) of Lemma \ref{lemma:5.1}, we calculate
$$
\begin{aligned}
(\nabla_{e_1} S){e_5}-(\nabla_{e_5} S){e_1}=-S(\nabla_{e_1}{e_5})+S\nabla_{e_5}{e_1}
=-S(q(e_1)A\xi)+S\nabla_{e_5}{e_1}=S\nabla_{e_5}{e_1}.
\end{aligned}
$$
By using Codazzi equation \eqref{eqn:2.9}, we have
$$
\begin{aligned}
&(\nabla_{e_1} S){e_5}-(\nabla_{e_5} S){e_1}=e_3.
\end{aligned}
$$
It follows that
$g(\nabla_{e_5}{e_1},e_3)=\frac{2}{\alpha-\sqrt{\alpha^2-4}}$.
On the other hand, we calculate
$$
\begin{aligned}
g(\nabla_{e_5}{e_3},e_1)=g(\nabla_{e_5}(Ae_1),e_1)=g(q(e_5)JAe_1+A\nabla_{e_5}{e_1},e_1)
=g(\nabla_{e_5}{e_1},e_3),
\end{aligned}
$$
which deduces that $g(\nabla_{e_5}{e_1},e_3)=0$. Then $\frac{2}{\alpha-\sqrt{\alpha^2-4}}=0$,
we get a contradiction.

%we take $X\in V_0$, $Y=JX\in V_{\frac{2}{\alpha}}$ into \eqref{eqn:call}, we have
%\begin{equation}\label{eqn:5.3sq}
%2\sum_{e_i\in V_{\frac{\alpha-\sqrt{\alpha^2-4}}{2}}}\frac{g((\nabla_{e_i} S)X,Y)^2}{(0-\frac{\alpha-\sqrt{\alpha^2-4}}{2})(\frac{2}{\alpha}-\frac{\alpha-\sqrt{\alpha^2-4}}{2})}
%=-3.
%\end{equation}
%By $\alpha>2$, we have $0<\frac{\alpha-\sqrt{\alpha^2-4}}{2}<\tfrac{2}{\alpha}<\frac{\alpha+\sqrt{\alpha^2-4}}{2}$, it implies that the left side of \eqref{eqn:5.3sq} is non-positive, the right side of \eqref{eqn:5.3sq} is positive, which is a contradiction.
If $\lambda=\frac{\alpha+\sqrt{\alpha^2-4}}{2}$,
we take unit vector fields $X\in V_0$, $Y=AX\in V_{\frac{\alpha+\sqrt{\alpha^2-4}}{2}}$ into \eqref{eqn:call}, we have
\begin{equation}\label{eqn:5.4sq}
2\sum_{\mu_i=\tfrac{2}{\alpha}}\frac{g((\nabla_{e_i} S)X,AX)^2}{(0-\tfrac{2}{\alpha})(\frac{\alpha+\sqrt{\alpha^2-4}}{2}-\tfrac{2}{\alpha})}
=(-1)(1-2)=1.
\end{equation}
By $\alpha>2$, we have $0<\frac{\alpha-\sqrt{\alpha^2-4}}{2}<\tfrac{2}{\alpha}<\frac{\alpha+\sqrt{\alpha^2-4}}{2}$, it implies that the left side of \eqref{eqn:5.4sq} is non-positive, the right side of \eqref{eqn:5.4sq} is positive, which is a contradiction.

If $\alpha=\sqrt{2}$, then $\tfrac{2}{\alpha}=\sqrt{2}$, which contradict with $\alpha\notin\sigma(\mathcal{Q})$.

If $\alpha=2$, then $\tfrac{2}{\alpha}=1$. By the assumption that $M$ has four distinct principal curvatures, we have another one constant principal curvature $\lambda\in\sigma(\mathcal{Q})$ and $\lambda\notin\{2,0,1\}$. Then, by Lemma \ref{lemma:5.5}, we can have
$$
\begin{aligned}
\mathcal{Q}=V_0\oplus V_\lambda\oplus V_1, \ \ \phi V_0\subset V_1,\ \ \phi V_\lambda\subset V_1.
\end{aligned}
$$
If $\lambda<0$ or $\lambda>1$, by (3) of Lemma \ref{lemma:5.1}, it holds
$AV_0\perp \{V_0,JV_0\}$. Now, we take unit vector fields $X\in V_0$ and $Y=JX\in V_1$ into \eqref{eqn:call}, we have
$$
0\leq2\sum_{\mu_i=\lambda}\frac{g((\nabla_{e_i} S)X,JX)^2}{(0-\lambda)(1-\lambda)}
=-3<0,
$$
which is a contradiction.
If $0<\lambda<1$, by (3) of Lemma \ref{lemma:5.1}, it holds
$AV_\lambda\perp \{V_\lambda,JV_\lambda\}$. Now, we take unit vector fields $X\in V_\lambda$ and $Y=JX\in V_1$ into \eqref{eqn:call}, we have
$$
0\leq2\sum_{\mu_i=0}\frac{g((\nabla_{e_i} S)X,JX)^2}{(\lambda-0)(1-0)}
=3(\lambda-1)<0,
$$
which is a contradiction.

In {\bf Case-i-3}, there exist tangent vector fields $X,Y\in\mathcal{Q}$, such that
$SX=\alpha X$ and $SY=0$. Now, by Lemma \ref{lemma:5.5}, it follows that $S\phi X=\tfrac{\alpha^2-2}{\alpha}\phi X$ and $S\phi Y=\frac{2}{\alpha}\phi Y$,
which implies that $\tfrac{\alpha^2-2}{\alpha},\frac{2}{\alpha}\in \sigma(\mathcal{Q})$.

When $\alpha\notin\{\sqrt{2},2\}$, then $\{\alpha,0,\tfrac{\alpha^2-2}{\alpha},\frac{2}{\alpha}\}$
are four distinct constant principal curvatures. By Lemma \ref{lemma:5.5}, we have
$$
\begin{aligned}
\mathcal{Q}=V_\alpha\oplus V_{\frac{\alpha^2-2}{\alpha}}\oplus V_0\oplus V_{\frac{2}{\alpha}}, \ \ \phi V_\alpha= V_{\frac{\alpha^2-2}{\alpha}},\ \ \phi V_0= V_{\frac{2}{\alpha}}.
\end{aligned}
$$

If $\alpha>2$, then $0<\frac{2}{\alpha}<\frac{\alpha^2-2}{\alpha}<\alpha$. By (3) of Lemma \ref{lemma:5.1}, it holds
$AV_0\perp \{V_0,JV_0\}$. Now, we take unit vector fields $X\in V_0$ and $Y=JX\in V_{\frac{2}{\alpha}}$ into \eqref{eqn:call}, we have
$$
0\leq2\sum_{\mu_i=\alpha}\frac{g((\nabla_{e_i} S)X,JX)^2}{(0-\alpha)(\frac{2}{\alpha}-\alpha)}+2\sum_{\mu_i=\frac{\alpha^2-2}{\alpha}}\frac{g((\nabla_{e_i} S)X,JX)^2}{(0-\frac{\alpha^2-2}{\alpha})(\frac{2}{\alpha}-\frac{\alpha^2-2}{\alpha})}
=-3<0,
$$
which is a contradiction.

If $\sqrt{2}<\alpha<2$, then $0<\frac{\alpha^2-2}{\alpha}<\frac{2}{\alpha}<\alpha$. So $\lambda_+=\alpha$
and $M_+$ is a focal submanifold of $M$. By Proposition \ref{prop:4.3w}, $M_+$ has
constant principal curvatures:
$0,\frac{\alpha(\alpha^2-3)}{2},-\frac{1}{\alpha},\frac{\alpha}{\alpha^2-2},\alpha(\alpha^2-3)$.
And it follows from
Proposition \ref{prop:4.3w} that $-\frac{1}{\alpha}<\frac{\alpha(\alpha^2-3)}{2}<\frac{\alpha}{\alpha^2-2}$.
Furthermore, we can know that $\alpha(\alpha^2-3)$ is different from $\frac{\alpha(\alpha^2-3)}{2},-\frac{1}{\alpha},\frac{\alpha}{\alpha^2-2}$. In
fact, if $\alpha(\alpha^2-3)=\frac{-1}{\alpha}$, by $\sqrt{2}<\alpha<2$, we have $\alpha=\frac{1+\sqrt{5}}{2}$. It follows that $M_+$
has constant principal curvatures $\frac{1-\sqrt{5}}{2},\frac{1-\sqrt{5}}{4},\frac{3+\sqrt{5}}{2},0$, which contradicts with the fact that $M_+$ is austere.
If $\alpha(\alpha^2-3)=\frac{\alpha(\alpha^2-3)}{2}$, by $\sqrt{2}<\alpha<2$, we have $\alpha=\sqrt{3}$. It follows that $M_+$
has constant principal curvatures $\frac{-1}{\sqrt{3}},\sqrt{3},0$, which contradicts with the fact that $M_+$ is austere. If $\alpha(\alpha^2-3)=\frac{\alpha}{\alpha^2-2}$, by $\sqrt{2}<\alpha<2$, we have $\alpha=\sqrt{\frac{5+\sqrt{5}}{2}}$. It follows that $M_+$
has constant principal curvatures $-\sqrt{\frac{2}{5+\sqrt{5}}},\frac{(\sqrt{5}-1)\sqrt{5+\sqrt{5}}}{4\sqrt{2}},
\frac{\sqrt{2(5+\sqrt{5})}}{1+\sqrt{5}},0$, which still contradicts with the fact that $M_+$ is austere.
Now, if one of $\{\frac{\alpha(\alpha^2-3)}{2},-\frac{1}{\alpha},\frac{\alpha}{\alpha^2-2},\alpha(\alpha^2-3)\}$ is $0$, then there are three distinct nonzero principal curvatures on $M_+$, which
contradicts with the fact that $M_+$ is austere. So $\{\frac{\alpha(\alpha^2-3)}{2},-\frac{1}{\alpha},\frac{\alpha}{\alpha^2-2},\alpha(\alpha^2-3)\}$ are all nonzero. Then due that $M_+$ is austere, it must hold
$$
\frac{\alpha(\alpha^2-3)}{2}-\frac{1}{\alpha}+\frac{\alpha}{\alpha^2-2}+\alpha(\alpha^2-3)=0.
$$
But above equation has no real solution to $\alpha$. It is a contradiction.

If $0<\alpha<\sqrt{2}$, then $\frac{\alpha^2-2}{\alpha}<0<\alpha<\frac{2}{\alpha}$. So  $\lambda_+=\frac{2}{\alpha}$
and $M_+$ is a focal submanifold of $M$. By Proposition \ref{prop:4.3w}, $M_+$ has constant principal curvatures:
$0,\frac{-\alpha}{2},-\alpha,\frac{-1}{\alpha},\frac{\alpha}{2-\alpha^2}$. And it follows from
$0<\alpha<\sqrt{2}$ that $\frac{-1}{\alpha}<\frac{-\alpha}{2}<\frac{\alpha}{2-\alpha^2}$.
Furthermore, we can know that $-\alpha$ is different from $\frac{-1}{\alpha},\frac{-\alpha}{2},\frac{\alpha}{2-\alpha^2}$. In
fact, by $0<\alpha<\sqrt{2}$, we only need to consider $-\alpha=\frac{-1}{\alpha}$, which deduces that $\alpha=1$. If $\alpha=1$, it follows that $M_+$
has constant principal curvatures $-1,\frac{-1}{2},1,0$, which contradicts with the fact that $M_+$ is austere.
Now, according to that $\{\frac{-\alpha}{2},-\alpha,\frac{-1}{\alpha},\frac{\alpha}{2-\alpha^2}\}$ are
nonzero, and they are different to each other, then from the fact that $M_+$ is austere, it must hold
$$
-\frac{\alpha}{2}-\alpha-\frac{1}{\alpha}+\frac{\alpha}{2-\alpha^2}=0,
$$
which deduces $\alpha=\sqrt{\frac{\sqrt{13}+1}{3}}$. But if $\alpha=\sqrt{\frac{\sqrt{13}+1}{3}}$,
by Proposition \ref{prop:4.3w},
$M_+$ has constant principal curvatures:
$-\frac{\sqrt{\sqrt{13}-1}}{2}$, $-\frac{1}{2}\sqrt{\frac{\sqrt{13}+1}{3}}$, $\sqrt{\frac{7}{2}+\sqrt{13}}$, $-\sqrt{\frac{\sqrt{13}+1}{3}}$ and $0$,
which means that $M_+$ is not austere. We get a contradiction.

If $\alpha=\sqrt{2}$, then there is another principal curvature $\lambda\in\sigma(\mathcal{Q})$ and
$\lambda\notin\{\sqrt{2},0\}$. For any $X\in V_\lambda$, by Lemma \ref{lemma:5.5}, we have $(2\lambda-\sqrt{2})S\phi X=(\sqrt{2}\lambda-2)\phi X$. It follows that $\mu=\frac{\sqrt{2}\lambda-2}{2\lambda-\sqrt{2}}\in\sigma(\mathcal{Q})$, which
is also different from $\{0,\sqrt{2}\}$. From the relationship between $\lambda$ and
$\mu$, if $\lambda\in(-\infty,0)$ then $\mu\in(\frac{1}{\sqrt{2}},\sqrt{2})$.
If $\lambda\in(\sqrt{2},+\infty)$, then $\mu\in(0,\frac{1}{\sqrt{2}})$.
So, by exchange $\lambda$ and $\mu$ if necessary, we only need to consider two cases that
$\lambda<0<\mu<\sqrt{2}$ or $0<\mu<\sqrt{2}<\lambda$.

If $\lambda<0<\mu<\sqrt{2}$, then $\lambda_+=\sqrt{2}$, and $M_+$
is a focal submanifold of $M$. By Proposition \ref{prop:4.3w}, $M_+$ has constant principal curvatures:
$0,-\sqrt{2},\frac{\sqrt{2}\lambda-1}{\sqrt{2}-\lambda},\frac{-1}{\lambda},
-\frac{1}{\sqrt{2}}$. By $\lambda<0<\mu<\sqrt{2}$, we have $-\sqrt{2}<\frac{\sqrt{2}\lambda-1}{\sqrt{2}-\lambda}<\frac{-1}{\sqrt{2}}<0<\frac{-1}{\lambda}$.
It contradicts with the fact that $M_+$ is austere.

If $0<\mu<\sqrt{2}<\lambda$, then $\lambda_+=\lambda$, and $M_+$ is a focal submanifold of $M$.
Now, $M_+$ has constant principal curvatures:
$0,\sqrt{2}+\frac{-2\lambda}{\lambda^2-\sqrt{2}\lambda+1},\frac{\lambda\mu-1}{\lambda-\mu},
\frac{\sqrt{2}\lambda-1}{\lambda-\sqrt{2}},\frac{-1}{\lambda}$, where $\mu=\frac{\sqrt{2}\lambda-2}{2\lambda-\sqrt{2}}$.
By Proposition \ref{prop:4.3w}, it holds  $\frac{-1}{\lambda}<\frac{\lambda\mu-1}{\lambda-\mu}<\frac{\sqrt{2}\lambda-1}{\lambda-\sqrt{2}}$.
Furthermore, we can know that $\sqrt{2}+\frac{-2\lambda}{\lambda^2-\sqrt{2}\lambda+1}$ is different from $\frac{\lambda\mu-1}{\lambda-\mu},
\frac{\sqrt{2}\lambda-1}{\lambda-\sqrt{2}},\frac{-1}{\lambda}$. In
fact, if $\sqrt{2}+\frac{-2\lambda}{\lambda^2-\sqrt{2}\lambda+1}=\frac{-1}{\lambda}$, by $\lambda>\sqrt{2}$, we have $\lambda=\frac{\sqrt{2}+\sqrt{6}}{2}$. It follows that $M_+$
has constant principal curvatures $\frac{-2}{\sqrt{2}+\sqrt{6}},\frac{-3+\sqrt{3}}{2\sqrt{6}},
\sqrt{3(2+\sqrt{3})},0$, which contradicts with the fact that $M_+$ is austere.
If $\sqrt{2}+\frac{-2\lambda}{\lambda^2-\sqrt{2}\lambda+1}=\frac{\lambda\mu-1}{\lambda-\mu}$, by $\lambda>\sqrt{2}$, we have $\lambda=1+\sqrt{2}$. It follows that $M_+$
has constant principal curvatures $1-\sqrt{2},1+\sqrt{2},0$, which contradicts with the fact that $M_+$ is austere. If $\sqrt{2}+\frac{-2\lambda}{\lambda^2-\sqrt{2}\lambda+1}=\frac{\sqrt{2}\lambda-1}{\lambda-\sqrt{2}}$, by $\lambda>\sqrt{2}$, there is no solution to $\lambda$.
Now, according to that $\{\sqrt{2}+\frac{-2\lambda}{\lambda^2-\sqrt{2}\lambda+1},\frac{\lambda\mu-1}{\lambda-\mu},
\frac{\sqrt{2}\lambda-1}{\lambda-\sqrt{2}},\frac{-1}{\lambda}\}$ are different to each other, and the fact that $M_+$ is austere, it must hold
$$
\sqrt{2}+\frac{-2\lambda}{\lambda^2-\sqrt{2}\lambda+1}+\frac{\lambda\mu-1}{\lambda-\mu}+
\frac{\sqrt{2}\lambda-1}{\lambda-\sqrt{2}}+\frac{-1}{\lambda}=0.
$$
From $\mu=\frac{\sqrt{2}\lambda-2}{2\lambda-\sqrt{2}}$, above equation does not have a real solution to $\lambda$. Thus, we get a contradiction.

If $\alpha=2$, then $\tfrac{\alpha^2-2}{\alpha}=\frac{2}{\alpha}=1$ and there is another one principal curvature $\lambda\in\sigma(\mathcal{Q})$ and
$\lambda\notin\{2,0,1\}$. Now, by Lemma \ref{lemma:5.5}, we have
$$
\begin{aligned}
\mathcal{Q}=V_2\oplus V_0\oplus V_1\oplus V_\lambda, \ \ \phi V_2\subset V_1,\ \ \phi V_0\subset V_1,\ \ \phi V_\lambda\subset V_1.
\end{aligned}
$$
If $\lambda<0<1<2$ or $0<1<\lambda<2$ or $0<1<2<\lambda$, by (3) of Lemma \ref{lemma:5.1}, it holds
$AV_0\perp \{V_0,JV_0\}$. Now, we take
unit vector fields $X\in V_0$ and $Y=JX\in V_1$ into \eqref{eqn:call}, we have
$$
0\leq2\sum_{\mu_i=2}\frac{g((\nabla_{e_i} S)X,JX)^2}{(0-2)(1-2)}+2\sum_{\mu_i=\lambda}\frac{g((\nabla_{e_i} S)X,JX)^2}{(0-\lambda)(1-\lambda)}
=-3<0,
$$
which is a contradiction.
If $0<\lambda<1<2$, by (3) of Lemma \ref{lemma:5.1}, it holds
$AV_\lambda\perp \{V_\lambda,JV_\lambda\}$. Now, we take unit vector fields $X\in V_\lambda$ and $Y=JX\in V_1$ into \eqref{eqn:call}, we have
$$
0\leq2\sum_{\mu_i=2}\frac{g((\nabla_{e_i} S)X,JX)^2}{(\lambda-2)(1-2)}+2\sum_{\mu_i=0}\frac{g((\nabla_{e_i} S)X,JX)^2}{(\lambda-0)(1-0)}
=3(\lambda-1)<0,
$$
which is a contradiction.

In {\bf Case-i-4}, we know that there is another one constant principal curvature
$\lambda\in\sigma(\mathcal{Q})$, and it holds
$\lambda\notin\{\alpha,0\}$. For any $X\in V_\lambda$, by Lemma \ref{lemma:5.5}, we have $(2\lambda-\alpha)S\phi X=(\alpha\lambda-2)\phi X$.

If $0<\alpha<2$, then we have $\frac{\alpha\lambda-2}{2\lambda-\alpha}\in\sigma(\mathcal{Q})$
and $\frac{\alpha\lambda-2}{2\lambda-\alpha}\neq\lambda$.
So $\mathcal{Q}=V_{\lambda}\oplus V_{\frac{\alpha\lambda-2}{2\lambda-\alpha}}$.
If $\lambda\neq\pm1$, by (3) of Lemma \ref{lemma:5.1}, on $\mathcal{Q}$, it holds
$AV_\lambda\bot {\rm Span}\{V_\lambda, V_{\frac{\alpha\lambda-2}{2\lambda-\alpha}}\}$, which is
a contradiction. Thus, it holds $\{\lambda,\frac{\alpha\lambda-2}{2\lambda-\alpha}\}=\{-1,1\}$.
Now, we have
$$
\begin{aligned}
\mathcal{Q}=V_1\oplus V_{-1}, \ \ \phi V_1= V_{-1},\ \ {\rm dim}V_1={\rm dim}V_{-1}=m-2.
\end{aligned}
$$
In particular, according to the principal curvatures of
Example \ref{ex:3.8}, we know that Example \ref{ex:3.8} is contained in this case.

If $\alpha>2$, then we have $\frac{\alpha\lambda-2}{2\lambda-\alpha}\in\sigma(\mathcal{Q})$.
We assume that $\lambda\neq\pm1$ and $\lambda\neq\frac{\alpha\lambda-2}{2\lambda-\alpha}$, by Lemma \ref{lemma:5.5}, we know that
$$
\begin{aligned}
\mathcal{Q}=V_\lambda\oplus V_{\frac{\alpha\lambda-2}{2\lambda-\alpha}}, \ \
\phi V_\lambda= V_{\frac{\alpha\lambda-2}{2\lambda-\alpha}}.
\end{aligned}
$$
But, by (3) of Lemma \ref{lemma:5.1}, it holds
$AV_\lambda\bot {\rm Span}\{V_\lambda, V_{\frac{\alpha\lambda-2}{2\lambda-\alpha}}\}$, which is
a contradiction. So, we have $\lambda=\pm1$ or $\lambda=\frac{\alpha\lambda-2}{2\lambda-\alpha}$.

If $\lambda=\pm1$, then $\frac{\alpha\lambda-2}{2\lambda-\alpha}=\mp1$.
By Lemma \ref{lemma:5.5}, we know that
$$
\begin{aligned}
\mathcal{Q}=V_1\oplus V_{-1}, \ \ \phi V_1= V_{-1},\ \ {\rm dim}V_1={\rm dim}V_{-1}=m-2.
\end{aligned}
$$
In particular, according to the principal curvatures of
Example \ref{ex:3.9}, we know that Example \ref{ex:3.9} is contained in this case.

If $\lambda=\frac{\alpha\lambda-2}{2\lambda-\alpha}$, then $\lambda=\frac{\alpha\pm\sqrt{\alpha^2-4}}{2}$.
According to Lemma \ref{lemma:5.5} and the assumption that $M$ has
four distinct principal curvatures, we must have
$$
\begin{aligned}
\mathcal{Q}=V_{\frac{\alpha-\sqrt{\alpha^2-4}}{2}}\oplus V_{\frac{\alpha+\sqrt{\alpha^2-4}}{2}}, \ \ \phi V_{\frac{\alpha-\sqrt{\alpha^2-4}}{2}}=V_{\frac{\alpha-\sqrt{\alpha^2-4}}{2}},\ \ \phi V_{\frac{\alpha+\sqrt{\alpha^2-4}}{2}}=V_{\frac{\alpha+\sqrt{\alpha^2-4}}{2}}.
\end{aligned}
$$
It follows that $M$ satisfies $S\phi=\phi S$, which means that $M$ has isometric Reeb flow.
According to Theorem \ref{thm:2.2}, we know that $M$ is an open part of Example \ref{ex:3.5}.

If $\alpha=2$, then by the assumption that $M$ has four distinct principal curvatures,
there is another one constant principal curvature $\lambda\in\sigma(\mathcal{Q})$ and
$\lambda\notin\{2,0,1\}$. For any $X\in V_\lambda$, by Lemma \ref{lemma:5.5},
we have $S\phi X=\phi X$, so $1\in\sigma(\mathcal{Q})$.
It means that
$$
\begin{aligned}
\mathcal{Q}=V_\lambda\oplus V_1, \ \ \phi V_\lambda\subset V_1.
\end{aligned}
$$

If $\lambda=-1$, then we take unit vector fields $X\in V_{-1}$ and $Y=JX\in V_1$ into Cartan's formula \eqref{eqn:call2g}, we have
$$
\begin{aligned}
0&=(-2)\Big[1+2g(\phi X,Y)^2-2g(AX,Y)^2-2g(AX,JY)^2\\
&\qquad\qquad +g(AX,X)g(AY,Y)+g(AX,JX)g(AY,JY)\Big]\\
&=-6\Big(1-g(AX,X)^2-g(AX,JX)^2\Big).
\end{aligned}
$$
It deduces that $g(AX,X)^2+g(AX,JX)^2=1$, which means that $AX\in\{X,JX\}$ for any $X\in V_{-1}$.
Thus $A(V_{-1}\oplus\phi V_{-1})=(V_{-1}\oplus\phi V_{-1})$
and $A(V_1\ominus \phi V_{-1})=(V_1\ominus \phi V_{-1})$.
Assume that ${\rm dim}V_1>{\rm dim}V_{-1}$, from the fact that
$V_1\ominus \phi V_{-1}$ is $J$-invariant, one can choose unit vector fields
$Y_1,JY_1\in V_1\ominus \phi V_{-1}$ such that $AY_1=Y_1$ and $AJY_1=-JY_1$.
Now, we take unit vector fields $X=X_1\in V_{-1}$ and $Y=Y_1\in (V_1\ominus \phi V_{-1})$
into Cartan's formula \eqref{eqn:call2g}, with the use of $g(AX_1,Y_1)=g(AX_1,JY_1)=0$ and
$AY_1=Y_1$, it holds
\begin{equation}\label{eqn:5.5sq}
0=-2(1+g(AX_1,X_1)).
\end{equation}
On the other hand, we take $X=X_1\in V_{-1}$ and $Y=JY_1\in (V_1\ominus \phi V_{-1})$ into Cartan's formula \eqref{eqn:call2g}, with the use of $g(AX_1,Y_1)=g(AX_1,JY_1)=0$ and
$AJY_1=-JY_1$, it holds
\begin{equation}\label{eqn:5.6sq}
0=-2(1-g(AX_1,X_1)).
\end{equation}
By \eqref{eqn:5.5sq} and \eqref{eqn:5.6sq}, we get a contradiction. So it holds
$\phi V_{-1}=V_1$ and ${\rm dim}V_1={\rm dim}V_{-1}=m-2$.
In particular, according to the principal curvatures of
Example \ref{ex:3.10}, we know that Example \ref{ex:3.10} is contained in this case.

If $\lambda\neq-1$, by (3) of Lemma \ref{lemma:5.1}, it holds
$AV_\lambda\bot {\rm Span}\{V_\lambda, JV_\lambda\}$.
Now, we take unit vector fields $X\in V_\lambda$ and $Y=JX\in V_1$ into Cartan's formula \eqref{eqn:call2g}, with the use of $g(AX,X)=g(AX,JX)=0$, it holds
$0=3(\lambda-1)$. It contradicts with $\lambda\neq1$.

In {\bf Case-ii}, it holds $\alpha=0$. If $0\in\sigma(\mathcal{Q})$, then there exists unit tangent vector field $X\in\mathcal{Q}$ such that $SX=0$. Now, by Lemma \ref{lemma:5.5}, it follows that $\phi X=0$, which is a contradiction. So, $0\notin\sigma(\mathcal{Q})$.
There are another three distinct principal curvatures $\lambda_1,\lambda_2,\lambda_3$,
such that $\{\lambda_1,\lambda_2,\lambda_3\}=\sigma(\mathcal{Q})$ and
$\lambda_1,\lambda_2,\lambda_3\neq0$. For any $X\in V_{\lambda_i}$, $1\leq i\leq3$,
by Lemma \ref{lemma:5.5}, we have $S\phi X=\frac{-1}{\lambda_i}\phi X$.
It follows that
$\{\lambda_1,\lambda_2,\lambda_3,\frac{-1}{\lambda_1},\frac{-1}{\lambda_2},
\frac{-1}{\lambda_3}\}=\sigma(\mathcal{Q})$,
this implies that $\mathcal{Q}$ have an even number of nonzero distinct principal curvatures.
But, this contradicts with
the assumption that $M$ has four distinct principal curvatures.

We have completed the proof of Theorem \ref{thm:1.1c}.

%==================================================

%%%%%%%%%%%%%%%%%%%%%%%%%%%%%%%%%%%%%%%%%%%%%%%%%%%%%%%%%%%%%%%%%%%%
\normalsize\noindent

\vskip 10mm

\begin{flushleft}
Haizhong Li\\
{\sc Department of Mathematical Sciences, Tsinghua University,\\
Beijing, 100084, P.R. China}\\
E-mail: lihz@tsinghua.edu.cn

\vskip 1mm

Hiroshi Tamaru\\
{\sc Department of Mathematics, Graduate School of Science, Osaka Metropolitan University,
3-3-138, Sugimoto, Sumiyoshi-ku, Osaka, 558-8585, Japan}\\
E-mail: tamaru@omu.ac.jp

\vskip 1mm

Zeke Yao\\
{\sc School of Mathematical Sciences, South China Normal University,\\
Guangzhou 510631, P.R. China}\\
E-mail: yaozkleon@163.com

\end{flushleft}

\end{document}